\newtheorem{theorem}{Theorem}[section]
\newtheorem{prop}[theorem]{Proposition}
\newtheorem{corollary}[theorem]{Corollary}
\newtheorem{lemma}[theorem]{Lemma}
\theoremstyle{remark}
\newtheorem{remark}[theorem]{Remark}
\theoremstyle{definition}
\newtheorem{definition}[theorem]{Definition}
\newcommand{\norm}[1]{\left\lVert#1\right\rVert}
\newcommand\numberthis{\addtocounter{equation}{1}\tag{\theequation}}
\newcommand\blfootnote[1]{%
  \begingroup
  \renewcommand\thefootnote{}\footnote{#1}%
  \addtocounter{footnote}{-1}%
  \endgroup
}
\title[Asymptotic behavior of the spectral radius of cocycles]{Asymptotic behavior of the spectral radius of locally constant strongly irreducible cocycles}
\author{Nicolás Martínez Ramos}
\begin{document}

\maketitle

\begin{abstract}
    We establish some conditions under which $\text{GL}(d,\mathbb{R})$-valued cocycles over a subshift of finite type equipped with an equilibrium state exhibit exponential asymptotics for the spectral radius. Specifically, we show that the exponential growth rate of the spectral radius converges to the top Lyapunov exponent of the cocycle. This result provides a partial answer to a question posed by Aoun and Sert in \cite{aoun}. Our approach relies on large deviation estimates for linear cocycles, which may be of independent interest.  
\end{abstract}

\section{Introduction} 

\blfootnote{March, 2026}
\blfootnote{Mathematics Subject Classification: 37H15.}

Consider a sequence $g_{1},g_{2},\dots$ of independent and identically distributed random $d\times d$ matrices with distribution $\mu$. Furstenberg and Kesten \cite{FurstenbergandKesten} proved that $n^{-1}\log\norm{g_{n}\dots g_{1}}$ converges almost surely to a constant $\lambda_{1}(\mu)\in [-\infty, \infty)$. This constant is called \textit{first Lyapunov exponent} and is finite if the matrices are invertible and their distribution has finite first moment. 

We recall that a probability measure $\mu$ on $\text{GL}(d,\mathbb{R})$ has \textit{finite k-th moment} if 
$$\int \log\max\{\norm{g},\norm{g^{-1}}\}^{k}d\mu(g)<\infty.$$

As a trivial example, if $\mu$ is a Dirac mass at some matrix $g_{0}$, then $\lambda_{1}(\delta_{g_{0}})=\log \rho(g_{0})$, where $\rho(g_{0})$ denotes the \textit{spectral radius} of $g_{0}$, that is, the maximal absolute value of its eigenvalues. 

Cohen \cite{Cohen} inquired about the following variation of the Furstenberg-Kesten theorem, where the norm is replaced by the spectral radius: does the sequence $n^{-1}\log \rho(g_{n}\dots g_{1})$ converge? Moreover, under what conditions does this sequence converge almost surely to the Lyapunov exponent $\lambda_{1}(\mu)$?

A first positive result in this direction was presented by Guivarc'h under several additional hypotheses, namely, a stronger integrability condition (finite exponential moment), as well as strong irreducibility and proximality conditions. See \cite[Théorème~8']{Guivarch_1990}. 

The integrability condition was relaxed to finite second moment by Benoist and Quint \cite[Theorem~14.12]{RandomWalks}. Later, Aoun and Sert \cite{aoun} obtained the same limiting behavior while bypassing the need for strong irreducibility and proximality, needing only finiteness of the second moment.

The theorem of Furstenberg and Kesten was actually stated in the more general setting of non-necessarily independent stationary processes. Let us introduce the equivalent dynamical setting of \textit{linear cocycles}. Let $(\Sigma, \mathcal{B}, \mu, T)$ be an ergodic measure preserving system and let $A:\Sigma\to M_{d\times d}(\mathbb{R})$ be a measurable map. We denote the \textit{cocycle products} as $A^{(n)}(x)=A(T^{n-1}x)\dots A(x)$.

In the context of linear cocycles, the \textit{first Lyapunov exponent} is the almost sure limit $\lambda_{1}(\mu)=\lim_{n\to \infty}n^{-1}\log{a_{1}(A^{(n)}(x))}$, where $a_{1}(\cdot)$ denotes the first singular value. More generally, the \textit{Lyapunov exponents} $\lambda_{1}(\mu)\geq \dots \geq \lambda_{d}(\mu)$ are the constant almost sure limits
$$\lambda_{j}(\mu)=\lim_{n\to\infty}\dfrac{1}{n}\log{a_{j}(A^{(n)}(x))},$$
where $a_{j}(\cdot)$ denotes the $j$-th singular value. 

As an example, note that the previously discussed i.i.d. random products can be modeled using linear cocycles where the measure preserving system is a Bernoulli shift and the fiber map is locally constant. 

Let us resume our discussion on the growth of the spectral radius, now in the setting of linear cocycles.  In this context, Morris \cite[Theorem~1.6]{Morris} showed that for $\mu$-almost every $x\in\Sigma$
\begin{align*}
\limsup_{n\to\infty} \dfrac{1}{n}\log\rho(A^{(n)}(x))=\lambda_{1}(\mu). \numberthis \label{limsup}
\end{align*}
This result was initially proved for $\text{SL}(2,\mathbb{R})$-valued cocycles by Avila and Bochi \cite{Avila2002}. A different proof was given by Oregón-Reyes \cite[Theorem~1.4]{Eduardo}.

We should observe that the well-known theorem of Berger and Wang \cite{BergerWang} on the joint spectral radius is a consequence of Morris' theorem. 

We see that Morris' formula uses a limit superior, while Guivarc'h's is an almost sure limit. The limit (\ref{limsup}) may indeed fail to exist in the general case as observed by Avila and Bochi \cite{Avila2002}. In fact, it already fails to exist for Markov random products, see \cite{aoun}.

This setback prompted Aoun and Sert to ask about the algebraic and measure theoretic conditions on the linear cocycle needed to obtain the expected limit in the case of a Markov random product.

In this article, we give a partial answer to this question since we work in a setting which includes Markov random products. Let us state the main theorems; the appropriate definitions are given later: 

\begin{theorem}\label{TeoremaPrincipal}
    Suppose $(\hat{\Sigma},T)$ is a topologically mixing two-sided subshift of finite type. Suppose that $\hat\mu$ is the equilibrium state of some Hölder potential. Let $A:\hat{\Sigma}\to GL(d,\mathbb{R})$ be a strongly irreducible locally constant linear cocycle such that $\lambda_{1}(\hat{\mu})>\lambda_{2}(\hat{\mu})$. Then, we have the almost sure limit
    $$\lim_{n\to\infty} \dfrac{1}{n}\log{\rho(A^{(n)}(\hat{x}))}=\lambda_{1}(\hat{\mu}).$$
\end{theorem}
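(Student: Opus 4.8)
The plan is to prove the matching lower bound $\liminf_{n\to\infty}\frac1n\log\rho(A^{(n)}(\hat x))\ge\lambda_1(\hat\mu)$ almost surely; combined with the trivial inequality $\rho(g)\le\norm{g}$ and the Furstenberg--Kesten convergence $\frac1n\log\norm{A^{(n)}(\hat x)}\to\lambda_1(\hat\mu)$ (or with (\ref{limsup})), this yields the theorem. By the Borel--Cantelli lemma it is enough to show that for every $\epsilon>0$ there are constants $C,c>0$ with
\[
\hat\mu\Bigl\{\hat x:\ \tfrac1n\log\rho\bigl(A^{(n)}(\hat x)\bigr)\le\lambda_1(\hat\mu)-\epsilon\Bigr\}\le Ce^{-cn};
\]
then $\liminf_n\frac1n\log\rho(A^{(n)}(\hat x))\ge\lambda_1(\hat\mu)-\epsilon$ almost surely, and letting $\epsilon\downarrow0$ along a sequence finishes the argument.

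The elementary input is the estimate $\rho(g)\ge\frac1d\lvert\operatorname{tr}(g)\rvert$ together with the consequence of the singular value decomposition $g=K_1DK_2$ that
\[
\lvert\operatorname{tr}(g)\rvert\ \ge\ a_1(g)\,\lvert\langle U(g),V(g)\rangle\rvert\ -\ (d-1)\,a_2(g),
\]
where $U(g)$ and $V(g)$ denote the top left and right singular directions of $g$. Applying this with $g=A^{(n)}(\hat x)$, it suffices to show that, off a set of $\hat\mu$-measure at most $Ce^{-cn}$, one has (i) $a_1(A^{(n)}(\hat x))\ge e^{(\lambda_1(\hat\mu)-\epsilon)n}$; (ii) $a_2(A^{(n)}(\hat x))\le e^{(\lambda_2(\hat\mu)-\lambda_1(\hat\mu)+\epsilon)n}\,a_1(A^{(n)}(\hat x))$; and (iii) $\lvert\langle U(A^{(n)}(\hat x)),V(A^{(n)}(\hat x))\rangle\rvert\ge e^{-\epsilon n}$. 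Indeed, granting (i)--(iii) and using $\lambda_2(\hat\mu)<\lambda_1(\hat\mu)$, one obtains $\rho(A^{(n)}(\hat x))\ge\frac1{2d}e^{(\lambda_1(\hat\mu)-2\epsilon)n}$ for all sufficiently small $\epsilon$ and all large $n$, which is what is needed. Items (i) and (ii) are precisely the large deviation estimates advertised in the abstract: (i) is a lower large deviation bound for the norm cocycle, and (ii) follows from the analogous bound for the exterior square $\wedge^2A$ combined with (i) and the spectral gap $\lambda_1(\hat\mu)>\lambda_2(\hat\mu)$; both are derived from a spectral gap for the relevant twisted transfer operators, available because $A$ is locally constant, $\hat\mu$ is a Gibbs state, and $A$ is strongly irreducible with simple leading Lyapunov exponent.

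The crux is (iii): the leading singular directions of $A^{(n)}(\hat x)$ must be kept away from orthogonality. Using the singular gap from (ii) and the contraction of $A^{(n)}(\hat x)$ on $\mathbb{P}(\mathbb{R}^d)$, one shows that off an exponentially small set $U(A^{(n)}(\hat x))$ lies within $e^{-cn}$ of the equivariant Oseledets line $\xi(T^n\hat x)$, which depends only on the ``past'' coordinates, and $V(A^{(n)}(\hat x))$ lies within $e^{-cn}$ of the corresponding line $\eta(\hat x)$ of the adjoint cocycle, which depends only on the ``future'' coordinates; thus (iii) reduces to the bound $\hat\mu\{\lvert\langle\xi(T^n\hat x),\eta(\hat x)\rangle\rvert\le 2e^{-\epsilon n}\}\le Ce^{-cn}$. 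To obtain this I would combine three ingredients: (a) Hölder regularity of the Furstenberg measures $\xi_\ast\hat\mu$ and $\eta_\ast\hat\mu$, i.e.\ that the $r$-neighbourhood of any projective hyperplane has mass $\le Cr^\alpha$ for some $\alpha>0$, a consequence of strong irreducibility and simplicity of $\lambda_1(\hat\mu)$; (b) the fact that, up to an error $e^{-cn}$, $\xi(T^n\hat x)$ and $\eta(\hat x)$ are functions of two coordinate blocks inside $\{1,\dots,n\}$ separated by a gap of order $n$, since the boundary maps depend only exponentially weakly on distant coordinates; and (c) exponential decay of correlations for $\hat\mu$, which makes these two functions asymptotically independent. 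Together these give $\hat\mu\{\lvert\langle\xi(T^n\hat x),\eta(\hat x)\rangle\rvert\le 2e^{-\epsilon n}\}\le C\bigl(e^{-\alpha\epsilon n}+e^{-cn}\bigr)$, hence (iii).

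I expect the main obstacle to be exactly step (iii) in this generality. For i.i.d.\ or Markov random products the regularity of the stationary measures, the decay of dependence of the boundary maps on far-away symbols, and the decoupling of past and future are classical, but here they must be re-established for equilibrium states of Hölder potentials and locally constant cocycles; this, together with the large deviation estimates of the second paragraph, is where the hypotheses (local constancy of $A$, the Gibbs property of $\hat\mu$, strong irreducibility, and the Lyapunov gap) are used most essentially. Once (i)--(iii) are in place, the conclusion is immediate.
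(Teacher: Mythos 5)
Your plan has the same skeleton as the paper's: reduce to a lower large-deviation bound on $\rho(A^{(n)})/\norm{A^{(n)}}$, express that ratio via the singular directions $u$ and $s$, show those directions stabilize to "past-" and "future-dependent" objects, and use exponential decay of correlations of $\hat\mu$ to decouple them and conclude an anti-concentration estimate near projective hyperplanes. Your elementary trace bound $\rho(g)\ge\frac1d\lvert\operatorname{tr}(g)\rvert\ge\frac1d\bigl(a_1(g)\lvert\langle U(g),V(g)\rangle\rvert-(d-1)a_2(g)\bigr)$ is a perfectly adequate substitute for the Benoist--Quint inequality the paper actually uses (that $\rho(g)/\norm{g}\ge\frac12 d(u(g),s(g))$ once $d(u(g),s(g))>2\sqrt{a_2(g)/a_1(g)}$), and your items (i)--(ii) match Lemma~\ref{PrimerLDE} applied to $A$ and $\mathsf{\Lambda}^2A$ together with Corollary~\ref{Corolario}.

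The genuine gap is item (iii)(a). You invoke H\"older regularity of the Furstenberg measures on $\mathbb{RP}^{d-1}$ --- a uniform bound $C r^\alpha$ for the mass of every $r$-neighborhood of a hyperplane --- as a black box that ``is a consequence of strong irreducibility and simplicity of $\lambda_1$.'' That is true in the i.i.d.\ setting (Guivarc'h), but it is not a known off-the-shelf result for locally constant cocycles over equilibrium states of H\"older potentials, and proving it would be at least as hard as the theorem itself. Crucially, the paper never establishes H\"older regularity; instead it proves a strictly weaker scale-$n$ anti-concentration estimate (Lemma~\ref{Lemma5.3}: $\hat\mu\{d(u(A^{(n)}),w^\perp)\le 3e^{-3\epsilon n}\}\le Me^{-\beta n}$), and this is derived from the pointwise-vector LDE of Lemma~\ref{SegundoLDE} combined with an inequality of Bourgain et al.\ relating $\norm{g^*w}/\norm{g^*}$ to $d(u(g),w^\perp)$ and $a_2(g)/a_1(g)$. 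That vector LDE is in turn the payoff of the paper's real technical contribution: Theorem~\ref{pesoCero} (every invariant $s$-state gives zero weight to projective subspaces, proved by a Bonatti--Viana type argument using continuous local product structure of $\hat\mu$ and strong irreducibility) feeding Lemma~\ref{LemmaIntegral} (uniform convergence of $\frac1n\int\log\norm{A^{(n)}v}\,d\hat\mu$ to $\lambda_1$), which is then fed into a martingale/Azuma scheme. So the chain pesoCero $\Rightarrow$ Lemma~\ref{LemmaIntegral} $\Rightarrow$ Lemma~\ref{SegundoLDE} $\Rightarrow$ Lemma~\ref{Lemma5.3} is precisely the content you are waving away under ``H\"older regularity.'' You correctly flag (iii) as the hard step, but as written the proposal substitutes an unproved (and stronger) hypothesis for the paper's actual construction; to close the gap you should either prove the zero-weight-on-hyperplanes theorem and the vector LDE, or else cite and verify a H\"older-regularity result valid in this exact setting, which to my knowledge does not currently exist.

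One smaller imprecision: you state that $U(A^{(n)}(\hat x))$ is close to the Oseledets line $\xi(T^n\hat x)$ ``which depends only on the past coordinates,'' while $V(A^{(n)}(\hat x))$ is close to a line ``depending only on the future coordinates.'' The genuine Oseledets objects depend on infinitely many coordinates, and the overlap at time $0,\dots,n$ prevents a clean past/future split. The paper instead replaces them by finite-block proxies (Lemmas~\ref{Lemma5.1}--\ref{Lemma5.2} and Remark~\ref{Remark5.2}): $s(A^{(n)})\approx s(A^{(\lfloor n/4\rfloor)})$ depending on coordinates $[0,\lfloor n/4\rfloor]$ and $u(A^{(n)})\approx u(A^{(\lfloor n/4\rfloor)}(T^{\lceil 3n/4\rceil}\hat x))$ depending on $[\lceil 3n/4\rceil,n]$, so the two blocks are separated by a gap of order $n$ and the decay-of-correlations argument (Lemma~\ref{Lemma5.4}) applies cleanly. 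Your sketch should be rewritten in these finite-window terms; once you do, your step (iii)(b)--(c) matches the paper.
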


 For this purpose, we first define a strong irreducibility assumption which is better suited for the non i.i.d. case. The following definition is inspired by Bougerol \cite{Bougerol1988}:
\begin{definition}
Suppose $(\hat{\Sigma}, T)$ is a subshift of finite type of memory $k$ and $A$ is a locally constant cocycle over $(\hat{\Sigma}, T)$ which is constant on the cylinders of length $k+1$.  Then $A$ is \textit{not strongly irreducible} if there is $1\leq l\leq d-1$ and a finite family of locally constant maps $V_{1},\dots,V_{m}:\hat{\Sigma}\to \text{Grass}(l,\mathbb{R}^{d})$ which is constant on cylinders of length $k$ such that for all $\hat{x}\in \hat{\Sigma}$
    $$A(\hat{x})(V_{1}(\hat{x})\cup\dots\cup V_{m}(\hat{x}))=V_{1}(T\hat{x})\cup\dots\cup V_{m}(T\hat{x}).$$ 
\end{definition}

It should be noted that in the case of locally constant cocycles, the assumption that $A$ is strongly irreducible is weaker than the assumption that the cocyle is pinching and twisting, which is a central hypothesis in the works \cite{bonatti_viana_2004}, \cite{Duarte}, \cite{ParkPiraino} (see Remark \ref{pinchingtwisting}). Additionally, using a perturbation argument, we see that strong irreducibility is a typical property of locally constant cocycles over subshifts of finite type. 

Among our hypotheses, we require a gap between the first two Lyapunov exponents. This assumption is satisfied generically: Indeed, it was proved by Bonatti and Viana \cite{bonatti_viana_2004} that such gap is present in pinching and twisting cocycles. Hence, by Remark \ref{pinchingtwisting}, this hypothesis is typically fulfilled in our setting. 

In any case, we are able to drop this hypothesis if we impose a stronger irreducibility assumption:

\begin{theorem}\label{TeoremaSecundario}
    Suppose $(\hat{\Sigma},T)$ is a topologically mixing two-sided subshift of finite type. Suppose that $\hat\mu$ is the equilibrium state of some Hölder potential. Let $A:\hat{\Sigma}\to GL(d,\mathbb{R})$ be a locally constant linear cocycle such that $\mathsf{\Lambda}^{j}A$ is strongly irreducible for all $1\leq j\leq d$. Then, we have the almost sure limit
    $$\lim_{n\to\infty} \dfrac{1}{n}\log{\rho(A^{(n)}(\hat{x}))}=\lambda_{1}(\hat{\mu}).$$
\end{theorem}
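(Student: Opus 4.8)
The plan is to reduce Theorem \ref{TeoremaSecundario} to Theorem \ref{TeoremaPrincipal} by passing to an exterior power of the cocycle. By Morris' formula (\ref{limsup}), we already have $\limsup_{n\to\infty}\frac1n\log\rho(A^{(n)}(\hat x))\le\lambda_1(\hat\mu)$ for $\hat\mu$-almost every $\hat x$, so the only thing left to prove is the matching lower bound for the liminf; for that I would not argue directly with $A$ but with a higher exterior power chosen so that the spectral gap hypothesis of Theorem \ref{TeoremaPrincipal} becomes available.

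Concretely, let $p\in\{1,\dots,d\}$ be the multiplicity of the top Lyapunov exponent, i.e.\ $\lambda_1(\hat\mu)=\dots=\lambda_p(\hat\mu)>\lambda_{p+1}(\hat\mu)$, with the convention $\lambda_{d+1}(\hat\mu)=-\infty$. I would consider the locally constant cocycle $\mathsf{\Lambda}^p A:\hat\Sigma\to GL\big(\binom dp,\mathbb{R}\big)$, which is strongly irreducible by hypothesis. Since the singular values of $\mathsf{\Lambda}^p g$ are the products $a_{i_1}(g)\cdots a_{i_p}(g)$ over $p$-subsets $i_1<\dots<i_p$, the Lyapunov exponents of $\mathsf{\Lambda}^p A$ with respect to $\hat\mu$ are the ordered sums $\lambda_{i_1}(\hat\mu)+\dots+\lambda_{i_p}(\hat\mu)$; hence $\lambda_1(\mathsf{\Lambda}^p\hat\mu)=\lambda_1(\hat\mu)+\dots+\lambda_p(\hat\mu)=p\,\lambda_1(\hat\mu)$ while $\lambda_2(\mathsf{\Lambda}^p\hat\mu)=\lambda_1(\hat\mu)+\dots+\lambda_{p-1}(\hat\mu)+\lambda_{p+1}(\hat\mu)$, so the gap $\lambda_1(\mathsf{\Lambda}^p\hat\mu)-\lambda_2(\mathsf{\Lambda}^p\hat\mu)=\lambda_p(\hat\mu)-\lambda_{p+1}(\hat\mu)=\lambda_1(\hat\mu)-\lambda_{p+1}(\hat\mu)>0$ by the choice of $p$. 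All hypotheses of Theorem \ref{TeoremaPrincipal} are then satisfied by $\mathsf{\Lambda}^p A$ (when $p=d$ the target is one-dimensional and the statement below is just Birkhoff's theorem applied to $\log|\det A|$), and I would invoke it to get the almost sure limit
$$\lim_{n\to\infty}\frac1n\log\rho\big((\mathsf{\Lambda}^p A)^{(n)}(\hat x)\big)=p\,\lambda_1(\hat\mu).$$

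To transfer this back, note that $\mathsf{\Lambda}^p$ is multiplicative, so $(\mathsf{\Lambda}^p A)^{(n)}(\hat x)=\mathsf{\Lambda}^p\big(A^{(n)}(\hat x)\big)$, and the eigenvalues of $\mathsf{\Lambda}^p g$ are the products of $p$ eigenvalues of $g$; ordering eigenvalues by decreasing modulus this gives $\rho(\mathsf{\Lambda}^p g)=\prod_{i=1}^p|\mu_i(g)|\le\rho(g)^p$. Applying this to $g=A^{(n)}(\hat x)$ yields $\frac1n\log\rho(A^{(n)}(\hat x))\ge\frac1{pn}\log\rho\big((\mathsf{\Lambda}^p A)^{(n)}(\hat x)\big)$, hence $\liminf_{n\to\infty}\frac1n\log\rho(A^{(n)}(\hat x))\ge\lambda_1(\hat\mu)$ almost surely, which combined with (\ref{limsup}) finishes the proof. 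The substantive difficulty is already absorbed into Theorem \ref{TeoremaPrincipal} and the large deviation estimates behind it; the only genuinely new point here is recognizing that the right exterior power to take is $p=\dim$ of the top Oseledets space, precisely because that is the power for which a spectral gap is created, and then keeping the singular value / eigenvalue / Lyapunov exponent bookkeeping straight. I expect the minor obstacle to be making sure these identities (and the strong irreducibility of $\mathsf{\Lambda}^p A$, which the hypothesis supplies directly) are stated cleanly, and, if one wants a fully self-contained argument, checking that the large deviation machinery underlying Theorem \ref{TeoremaPrincipal} is not itself invoking the present statement.
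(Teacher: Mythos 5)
Your proposal is correct and follows essentially the same route as the paper: pass to $\mathsf{\Lambda}^{p}A$ where $p$ is the multiplicity of the top exponent (so that a spectral gap appears), invoke Theorem~\ref{TeoremaPrincipal} for this strongly irreducible cocycle, and transfer the limit back via $\rho(\mathsf{\Lambda}^{p}g)\le\rho(g)^{p}$ together with Morris' formula~\eqref{limsup} for the matching upper bound. The only cosmetic difference is that the paper treats the degenerate case $\lambda_{1}=\dots=\lambda_{d}$ separately via $a_{1}(g)\ge\rho(g)\ge a_{d}(g)$, while you absorb it by noting $\mathsf{\Lambda}^{d}A=\det A$ reduces to Birkhoff's theorem.
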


We highlight that Theorems \ref{TeoremaPrincipal} and \ref{TeoremaSecundario} extend the result of Benoist and Quint at least for the case of finite state Markov systems.

Our proof crucially relies on Large Deviation Estimates (LDE) which we obtain by adapting techniques developed by several authors in different settings, see \cite{ParkPiraino}, \cite{Duarte}, \cite{avila2024schrodingeroperatorspotentialsgenerated}. An important component of our approach is a property about zero weight on hyperplanes, which we obtained by a variation of the arguments of Bonatti and Viana \cite{bonatti_viana_2004}.

The organization of the paper is as follows: In Section \ref{Section2} we will establish the notation and give the necessary definitions for the rest of the article. Next, in Section \ref{Section3} we will show the weightlessness of hyperplanes for some measures whose projection is $\hat{\mu}$, which is a necessary tool to develop the Large Deviation Estimates proved in Section \ref{Section4}. In Section \ref{Section5} we will use some geometrical arguments to estimate the distance between subspaces defined by the growth of the cocycle. Finally, in Section \ref{Section6} we will conclude by proving Theorems \ref{TeoremaPrincipal} and \ref{TeoremaSecundario}. 

\section{Preliminaries} \label{Section2}

Let $X$ be a finite set of symbols of cardinality $q$. Recall that all subshifts of finite type are topologically conjugate to a memory $1$ subshift (see \cite[Proposition~2.3.9]{LindandMarcus}). Hence, for the rest of the paper we will assume the topologically mixing two-sided shift $\hat{\Sigma}\subseteq X^{\mathbb{Z}}$ is an edge shift over the symbols of $X$ described by a zero-one adjacency matrix $P$. In particular, $P$ is a primitive $q\times q$ matrix. 

Additionally, we denote by $\Sigma^{-}\subseteq X^{\mathbb{Z}_\leq0}$ and $\Sigma\subseteq X^{\mathbb{Z}_{\geq 0}}$ the one-sided subshifts of finite type to the left and to the right defined by $P$, respectively. 
Moreover, we consider $T:\hat{\Sigma}\to\hat{\Sigma}$ the left shift operator. Note that $T$ is also properly defined for $\Sigma$, while the right shift operator $T^{-1}$ is defined for the shift space in non-positive coordinates $\Sigma^{-}$.

We will use a hat, e.g. $\hat{x}\in \hat{\Sigma}$ to represent two-sided elements. Similarly, we will use a minus sign to represent one-sided elements to the left  e.g. $x^-\in \Sigma^-$. When we do not use a superscript, we will refer to an element which is one-sided to the right, that is $x\in\Sigma$.

\begin{remark}
    Note that both $\Sigma^{-}$ and $\Sigma$ include the zeroth coordinate, hence, it is not true that $\hat{\Sigma}=\Sigma^{-}\times \Sigma$. However, for every element $\xi_{k}\in X$ we can define a homeomorphism $P_{k}$ from $[\xi_{k}]\subseteq \hat{\Sigma}$ to $[\xi_{k}]^{-}\times [\xi_{k}]\subseteq \Sigma^{-}\times \Sigma$ such that $P_{k}(\hat{x})=(P^{-}\hat{x}, P \hat{x})$, where $P^{-},P$ are the projections to $\Sigma^{-},\Sigma$ respectively. Thus, by a small abuse of notation, we will sometimes write $\hat{x}=(x^{-},x)$, where $x^{-}, x$ share the same zeroth coordinate. 
\end{remark}

We consider the metric $\iota: \hat{\Sigma}\times\hat{\Sigma}\to \mathbb{R}$ defined by
$$\iota(\hat{x},\hat{y})=2^{-k},$$
where $k$ is the largest integer such that $x_j=y_j$ for all $|j|\leq k$. We define metrics on $\Sigma^{-}$ and $\Sigma^{+}$ similarly.

For each $x\in\hat{\Sigma}$, we define the \textit{local stable set} $\mathcal{W}^{s}_{loc}(\hat{x})$ and the \textit{local unstable set} $\mathcal{W}^{u}_{loc}(\hat{x})$ as
$$\mathcal{W}^{s}_{loc}(\hat{x})=\{\hat{y}\in\hat{\Sigma}:x_{i}=y_{i} \text{ for } i\geq 0\},$$
$$\mathcal{W}^{u}_{loc}(\hat{x})=\{\hat{y}\in\hat{\Sigma}:x_{i}=y_{i} \text{ for } i\leq0\}.$$
In particular, since $\mathcal{W}^{s}_{loc}(\hat{x})$ depends uniquely on the nonnegative coordinates of the element, for $\hat{x}=(x^{-},x)$ we can define $\mathcal{W}^{s}_{loc}(x)=\mathcal{W}^{s}_{loc}(\hat{x})$. Similarly we can define $\mathcal{W}^{u}_{loc}(x^{-})=\mathcal{W}^{u}_{loc}(\hat{x})$.

Given $x^{-}\in \Sigma^{-}$ and $y\in \Sigma$ with $x^{-}_{0}=y_{0}$, we define the \textit{bracket} between these two elements $[x^{-},y]$ as the unique element in $\mathcal{W}^{u}_{loc}(x^{-})\cap\mathcal{W}^{s}_{loc}(y)$. In general, for any $\hat{x}\in \mathcal{W}^{u}_{loc}(x^{-})$ and $\hat{y}\in \mathcal{W}^{s}_{loc}(y)$, we can denote $[\hat{x},\hat{y}]=[x^{-},\hat{y}]=[\hat{x},y]=[x^{-},y].$ 
\\
\begin{definition}
We say that a $T$-invariant ergodic probability measure $\hat{\mu}$ on $\hat{\Sigma}$ has \textit{continuous local product structure} if there exist a continuous function $\psi:\hat{\Sigma}\to(0,\infty)$ such that on each $0$-cylinder $[\xi]\subset \hat{\Sigma}$
$$d\hat{\mu}=\psi|_{[\xi]}d(\mu^{-}\times\mu),$$
where $P^{-}_{*}\hat{\mu}=\mu^{-}$ and $P_{*}\hat{\mu}=\mu$ are the projections of $\hat{\mu}$ to the one-sided shift spaces $\Sigma^{-}$ and $\Sigma$ respectively.
\end{definition}
Thus for every $x\in \Sigma$ we can define the measure $d\hat{\mu}^{s}_{x}=\psi(\cdot, x)d\mu^{-}$ on the local stable set $\mathcal{W}^{s}_{loc}(x)$. We can similarly define a measure $\hat{\mu}_{x^{-}}^{u}$ on $\mathcal{W}^{u}_{loc}(x^{-})$.

\begin{prop}
    A $T$-invariant probability measure $\hat{\mu}$ on $\hat{\Sigma}$ has continuous local product structure with Hölder density if and only if it is the equilibrium state of a Hölder potential. 
\end{prop}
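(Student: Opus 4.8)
The plan is to prove both implications by passing through the natural extension and reducing everything to one-sided thermodynamic formalism. The key structural observation is that the fibres of the projection $\pi\colon\hat\Sigma\to\Sigma$ are exactly the local stable sets $\mathcal{W}^{s}_{loc}(x)$, i.e.\ the inverse-limit fibres of the one-sided shift $(\Sigma,T)$. Consequently a $T$-invariant measure on $\hat\Sigma$ is the unique $T$-invariant lift of its $\pi$-projection, hence coincides with the natural extension of that projection; and both the topological pressure and the equilibrium-state property of a potential on $\Sigma$ are unchanged after composing it with $\pi$ (for pressure this is because the entropy of a stationary process does not depend on whether it is read one- or two-sidedly, so the two variational problems coincide measure by measure, and a Hölder potential pulled back to the mixing shift $\hat\Sigma$ still has a unique equilibrium state).

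\textbf{Equilibrium state $\Rightarrow$ continuous local product structure.} Let $\hat\mu$ be the equilibrium state of a Hölder potential $\phi$. By Sinai's cohomology lemma there are Hölder $u,v\colon\hat\Sigma\to\mathbb{R}$ with $\phi+u-u\circ T=\varphi^{+}\circ\pi$ depending only on the non-negative coordinates and $\phi+v-v\circ T=\varphi^{-}\circ\pi^{-}$ depending only on the non-positive coordinates, $\varphi^{\pm}$ Hölder. Since a Hölder coboundary changes neither the pressure nor the equilibrium state, $\hat\mu$ is also the equilibrium state of $\varphi^{+}\circ\pi$, so by the previous paragraph $\mu:=\pi_{*}\hat\mu$ is the equilibrium state of $\varphi^{+}$ on $(\Sigma,T)$, and symmetrically $\mu^{-}:=\pi^{-}_{*}\hat\mu$ is that of $\varphi^{-}$ on $(\Sigma^{-},T^{-1})$, with $P(\phi)=P(\varphi^{+})=P(\varphi^{-})=:P$. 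I then write the Gibbs estimates for $\hat\mu$, $\mu$, $\mu^{-}$, split the Birkhoff sum $S_{m+n+1}\phi(T^{-m}\hat x)=\sum_{j=-m}^{n}\phi(T^{j}\hat x)$ at the index $0$, and telescope the two halves through the two cohomologies; the $\varphi^{\pm}$-Birkhoff sums and the pressure terms cancel, leaving --- uniformly in $m,n$ ---
\[
\frac{\hat\mu([x_{-m}\cdots x_{n}])}{(\mu^{-}\times\mu)\big([x^{-}_{-m}\cdots x^{-}_{0}]\times[x_{0}\cdots x_{n}]\big)}\ \asymp\ \Psi(\hat x),
\]
where $\Psi$ is a bounded positive Hölder function built from $e^{v-u}$ and the Ruelle--Perron--Frobenius eigendata of $\varphi^{\pm}$. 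From the uniform two-sided bound I get $\hat\mu\vert_{[\xi]}\asymp(\mu^{-}\times\mu)\vert_{[\xi]^{-}\times[\xi]}$ on each $0$-cylinder, hence mutual absolute continuity with density in $[C^{-1},C]$; by the martingale convergence theorem the ratio above converges $(\mu^{-}\times\mu)$-a.e.\ to this Radon--Nikodym derivative, and the bounded-distortion form of the Gibbs property ($|S_{k}\phi(\hat x)-S_{k}\phi(\hat y)|\le C\iota(\hat x,\hat y)^{\alpha}$ on cylinders, and likewise for $\varphi^{\pm}$) shows the derivative equals, up to a cylinder-dependent constant, a Hölder function. This is the asserted continuous local product structure with Hölder density.

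\textbf{Continuous local product structure $\Rightarrow$ equilibrium state.} Suppose $d\hat\mu=\psi\,d(\mu^{-}\times\mu)$ on each $0$-cylinder with $\psi$ Hölder, hence positive and bounded on the compact $\hat\Sigma$. By the first paragraph it suffices to show that $\mu=\pi_{*}\hat\mu$ is the equilibrium state of a Hölder potential on $(\Sigma,T)$; then $\hat\mu$, being the unique $T$-invariant lift of $\mu$, is the equilibrium state of that potential composed with $\pi$, which is Hölder on $\hat\Sigma$. I produce the potential from the Jacobian $J_{\mu}$ of $T$ with respect to $\mu$ (well defined, as $T$ is injective on each $0$-cylinder of $\Sigma$). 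The conditional measures of $\hat\mu$ on the fibres $\mathcal{W}^{s}_{loc}(x)$ of $\pi$ are the normalisations of the measures $\hat\mu^{s}_{x}=\psi(\cdot,x)\,d\mu^{-}$, and combining $\pi\circ T=T\circ\pi$ with $T$-invariance of $\hat\mu$ gives, for $y$ in the $0$-cylinder $[a]$ with $Ty=x$,
\[
J_{\mu}(y)^{-1}\ =\ \frac{\hat\mu^{s}_{x}\big(\{\,y^{-}:\,y^{-}_{-1}=a\,\}\big)}{\hat\mu^{s}_{x}\big(\mathcal{W}^{s}_{loc}(x)\big)}\ =\ \frac{\int_{\{y^{-}_{-1}=a\}}\psi(y^{-},x)\,d\mu^{-}(y^{-})}{\int\psi(y^{-},x)\,d\mu^{-}(y^{-})}.
\]
As $\psi$ is Hölder in its second variable uniformly in the first and bounded away from $0$ and $\infty$, the right-hand side is a bounded Hölder function of $x=Ty$, so $\log J_{\mu}$ is Hölder on $\Sigma$. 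Put $\varphi:=-\log J_{\mu}$. Iterating the Jacobian relation gives $\int_{[x_{0}\cdots x_{n-1}]}e^{-S_{n}\varphi}\,d\mu=\mu(T^{n}[x_{0}\cdots x_{n-1}])=\mu(T[x_{n-1}])$, a quantity bounded away from $0$ and $1$; together with bounded distortion for the Hölder $\varphi$ this yields $\mu([x_{0}\cdots x_{n-1}])\asymp e^{S_{n}\varphi(x)}$ uniformly, and summing over all $n$-cylinders forces $P(\varphi)=0$. Hence $\mu$ is the invariant Gibbs measure of the Hölder potential $\varphi$, so its unique equilibrium state, completing the argument.

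I expect the main obstacle to lie in the second implication: the content is that the \emph{a priori} weak regularity hypothesis --- continuity, in fact Hölder continuity, of the density together with the product form --- is rigid enough to force the one-sided Jacobian to be Hölder, hence the factor measure $\mu$ to be the equilibrium state of a Hölder potential. The delicate points are the precise passage between two-sided and one-sided Birkhoff sums and cylinders (in particular controlling the remote coordinates $T^{\pm m}\hat x$ in the telescoping) and the identification of $J_{\mu}$ with a ratio of stable-conditional masses; by comparison, in the first implication the only non-formal step is upgrading boundedness of the density --- immediate from the Gibbs inequality --- to Hölder continuity, which is exactly where the bounded-distortion estimate is used.
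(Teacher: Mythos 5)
Your proof is correct and follows the same two standard routes the paper invokes by citation: Sinai's cohomology lemma together with Gibbs/Ruelle--Perron--Frobenius estimates for the forward direction (the argument of \cite{bonatti_viana_2004}), and the construction of a Hölder potential $-\log J_{\mu}$ from the stable conditional measures for the converse (the argument of \cite{Haydn}). Since the paper's own proof consists of exactly those two citations, your write-up supplies the missing details rather than diverging from the paper's approach.
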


\begin{proof}\label{equilibriumimpliesproduct}
The continuous local product structure holds in the case of equilibrium states for Hölder potentials on $\hat{\Sigma}$, as discussed in Bonatti and Viana \cite[Section~2.2]{bonatti_viana_2004}. On the other hand, as proved in \cite[Corollary~5]{Haydn}, it is possible to construct a Hölder potential $\varphi$ from the Hölder density of $\hat{\mu}$.
\end{proof}

We will work with a \textit{locally constant linear cocycle} $A$, that is, there exists $m\in \mathbb{N}$ such that $A(\hat{x})$ is constant on any $m$-cylinder $[x_{0},\dots,x_{m-1}]\subseteq \hat{\Sigma}$. Considering the $(m-1)$-block codes of $\hat{\Sigma}$, we see that $A(\hat{x})$ is uniquely determined by the blocks $[x_0\dots x_{m-1}]$ and $ [x_1\dots x_{m}]$. Hence, without loss of generality, we may assume $A$ to be 2-step, that is, $A(\hat{x})$ is constant on each cylinder $[x_{0},x_{1}]\subset \hat{\Sigma}.$ 

Note that since $A$ is 2-step and the cocycle is strongly irreducible, there is not an equivariant finite family of maps $V_{1},\dots, V_{m}: \hat{\Sigma}\to \text{Grass}(l,\mathbb{R}^{d})$ which are constant on each cylinder $[x_{0}]$.

We can construct the projectivized skew product associated to $A$ as
    \begin{align*}
        F_{A}: \hat{\Sigma}\times \mathbb{RP}^{d-1}&\to\hat{\Sigma}\times \mathbb{RP}^{d-1}\\
        F_{A}(\hat{x},\overline{v})&=(T\hat{x}, \overline{A(\hat{x})v}),
    \end{align*}
where the overline denotes the projectivization of the vector underneath.
\begin{definition}
    Consider $\hat{m}$ a probability measure on $\hat{\Sigma}\times \mathbb{RP}^{d-1}$. We call $\hat{m}$ an \textit{invariant s-state} if:
    \begin{itemize}
        \item $\hat\pi_{*}\hat{m}=\hat{\mu}$ where $\hat\pi:\hat{\Sigma}\times\mathbb{RP}^{d-1}\to\hat{\Sigma}$ is the projection to $\hat{\Sigma}.$
        \item $F_{A*}\hat{m}=\hat{m}$.
        \item There exist a disintegration $\{\hat{m}_{\hat{x}}: \hat{x}\in\hat{\Sigma}\}$ such that for any $\hat{x},\hat{y}$ where $\hat{y}\in\mathcal{W}_{loc}^{s}(\hat{x})$, $\hat{m}_{\hat{y}}=\hat{m}_{\hat{x}}.$
    \end{itemize}
\end{definition}

Given $g\in \text{GL}(d, \mathbb{R})$, we take the singular value decomposition $g=L_g \Delta_g R_g$, where $\Delta_g$ is the diagonal matrix of ordered singular values, that is $\Delta_{g}=\text{diag}(a_1(g),\dots, a_{d}(g))$ and $a_1(g)\geq \dots \geq a_{d}(g)$.\\
Let $\{e_1,\dots, e_d\}$ be the canonical basis of $\mathbb{R}^{d}$. We denote the \textit{direction of maximum growth} by
$$u(g)=\overline{L_{g}e_{1}}\in\mathbb{RP}^{d-1}.$$
Moreover, we define the \textit{hyperplane of least growth} by
$$s(g)=\{\overline{v}\in\mathbb{RP}^{d-1}: v\in \text{span}\langle R_{g}^{-1}e_2,\dots, R_{g}^{-1}e_d \rangle\}.$$
This hyperplane is also characterized by the following relation
$$s(g)=\{\overline{w}\in\mathbb{RP}^{d-1}: \langle w, u(g^*)\rangle=0\}.$$
Additionally, we will use the following equality through the proof
$$\dfrac{a_{2}(g)}{a_{1}(g)}=\dfrac{\Vert\mathsf{\Lambda} ^2 g\Vert}{\Vert g\Vert^2}.$$
Finally, we endow $\mathbb{RP}^{d-1}$ with the gap metric. For $\overline{v},\overline{w}\in \mathbb{RP}^{d-1}$ we define 
$$d(\overline{v},\overline{w})=\dfrac{\Vert v\wedge w\Vert}{\Vert v \Vert \Vert w \Vert}=|\sin{\angle(v,w)}|.$$
Then $d$ is a metric, see \cite[Definition~4.1]{Bougerol1985ProductsOR}.
\section{Zero Weight on Hyperplanes} \label{Section3}
\subsection{Continuous local product structure} 
In the first part of this section, we will obtain some consequences for measures with continuous local product structure with Hölder continuous density. The following results are found in \cite{bonatti_viana_2004}. We will add their proofs for the convenience of the reader.

In this subsection, we will assume that $\hat{\mu}$ has continuous local product structure with Hölder density. Moreover, we will denote $P:\hat{\Sigma}\to\Sigma^{-}$ to be the projection of any $\hat{x}\in \hat{\Sigma}$ to its non-positive coordinates, that is, $P(\dots, x_{-2},x_{-1},x_{0},x_{1},x_{2},\dots)=(\dots, x_{-2},x_{-1},x_{0})$.

First, we show that we can find a continuous disintegration of $\hat{\mu}$. 
\begin{lemma} \label{jacobiano1}
    There exists a disintegration $(\hat{\mu}^{u}_{x^{-}})_{x^{-}\in\Sigma^{-}}$ of $\hat{\mu}$ which is continuous with respect to $x^{-}$ in the weak topology. Moreover, the maps
    \begin{align*}
    h_{y^{-}\xleftarrow{}x^{-}}:(\mathcal{W}^{u}_{loc}(x^{-}), \hat{\mu}^{u}_{x^{-}})&\to (\mathcal{W}^{u}_{loc}(y^{-}), \hat{\mu}^{u}_{y^{-}})   \\
    \hat{x}&\mapsto [y^{-},\hat{x}],
    \end{align*}
    are absolutely continuous with Jacobians $J_{y^{-}\xleftarrow{}x^{-}}$ which depend continuously on $x^{-}, y^{-}\in \Sigma^{-}.$
    \end{lemma}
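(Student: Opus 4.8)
The plan is to construct the disintegration explicitly from the continuous local product structure and then read off the absolute continuity and the continuous dependence of the Jacobians from the Hölder density $\psi$.

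First I would fix a reference fiber. For each symbol $\xi \in X$ recall that on the cylinder $[\xi] \subseteq \hat{\Sigma}$ we have $d\hat{\mu} = \psi|_{[\xi]}\, d(\mu^{-} \times \mu)$. I would like to peel off the $\Sigma^{-}$-coordinate. Concretely, given $x^{-} \in \Sigma^{-}$ with $x^{-}_0 = \xi$, the local unstable set $\mathcal{W}^{u}_{loc}(x^{-})$ is naturally identified with the cylinder $[\xi] \subseteq \Sigma$ via $y \mapsto [x^{-}, y]$, and I would define $\hat{\mu}_{x^{-}}$ to be the pushforward of the measure $\psi([x^{-},y]) \, d\mu(y)$ (restricted to $[\xi]$, and normalized by the total mass $\int_{[\xi]} \psi([x^{-},y])\,d\mu(y)$, which is positive and finite since $\psi$ is continuous and strictly positive on the compact set $\hat{\Sigma}$) under that identification. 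The standard disentangling computation using Fubini on $\Sigma^{-} \times \Sigma$ then shows $\int \hat{\mu}_{x^{-}} \, d\mu^{-}(x^{-})$ agrees with $\hat{\mu}$ after reincorporating the normalization into a density on $\Sigma^{-}$; more precisely one checks that the family $(\hat{\mu}_{x^{-}})$, together with the measure on $\Sigma^{-}$ obtained from $\mu^{-}$ reweighted by $x^{-}\mapsto \int_{[\xi]}\psi([x^{-},y])\,d\mu(y)$, realizes $\hat\mu$; since disintegrations over $P$ are essentially unique, this is \emph{the} disintegration (up to the usual null-set ambiguity, here removed by the continuous choice). Continuity of $x^{-} \mapsto \hat{\mu}_{x^{-}}$ in the weak topology is then immediate: if $x^{-}_n \to x^{-}$ (hence eventually in the same cylinder $[\xi]$), then $\psi([x^{-}_n, y]) \to \psi([x^{-}, y])$ uniformly in $y$ by uniform continuity of $\psi$, so the densities converge uniformly and the normalizing constants converge, giving weak (indeed setwise) convergence.

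Next I would compute the Jacobian of $h_{y^{-} \leftarrow x^{-}}$. Under the identifications $\mathcal{W}^{u}_{loc}(x^{-}) \cong [\xi] \cong \mathcal{W}^{u}_{loc}(y^{-})$ (valid when $x^{-}_0 = y^{-}_0 = \xi$; otherwise the map is not defined), the holonomy $h_{y^{-} \leftarrow x^{-}}$ becomes the identity on $[\xi] \subseteq \Sigma$, since $[y^{-}, [x^{-}, y]] = [y^{-}, y]$. Hence transporting $\hat{\mu}_{x^{-}}$ forward amounts to comparing the two densities $\psi([x^{-}, y])$ and $\psi([y^{-}, y])$ (with their normalizations), so
$$J_{y^{-} \leftarrow x^{-}}(\hat{x}) = \frac{\psi([y^{-}, \hat{x}])}{\psi([x^{-}, \hat{x}])} \cdot \frac{\int_{[\xi]} \psi([x^{-},y])\,d\mu(y)}{\int_{[\xi]} \psi([y^{-},y])\,d\mu(y)}.$$
This is manifestly bounded above and below (by positivity and continuity of $\psi$ on a compact space), so the maps are absolutely continuous with bounded Radon--Nikodym derivatives; and since $\psi$ is continuous, the right-hand side depends continuously on $(x^{-}, y^{-}) \in \Sigma^{-} \times \Sigma^{-}$ (jointly with $\hat{x}$), which gives the asserted continuous dependence of the Jacobians.

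I do not expect a serious obstacle here; the content is essentially a bookkeeping exercise extracting the fiberwise picture from the product structure. The one point requiring minor care is the distinction, flagged in the earlier remark, that $\Sigma^{-}$ and $\Sigma$ share the zeroth coordinate, so all the identifications and brackets must be performed cylinder by cylinder (fixing $x^{-}_0 = y^{-}_0 = \xi$), and the holonomy $h_{y^{-}\leftarrow x^{-}}$ is only defined between fibers over points with the same zeroth symbol; I would state this restriction explicitly. A secondary point is to confirm that the normalizing constants $\int_{[\xi]}\psi([x^{-},y])\,d\mu(y)$ are bounded away from $0$ and $\infty$ uniformly in $x^{-}$, which again follows from compactness of $\hat\Sigma$ together with $\psi$ being continuous and strictly positive. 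This matches the treatment in Bonatti--Viana \cite{bonatti_viana_2004}, which I would cite for the parts that are routine.
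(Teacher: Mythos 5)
Your construction is essentially the paper's, and your Jacobian formula is correct, but you have left one small loose end that the paper does tie up and that actually matters for the logic. You build the fiber measures by normalizing $\psi([x^{-},\cdot])\,d\mu$ by $Z(x^{-}) := \int_{[\xi]}\psi([x^{-},y])\,d\mu(y)$ and then claim the family, integrated against \emph{``$\mu^{-}$ reweighted by $Z$,''} recovers $\hat\mu$. But a Rokhlin disintegration of $\hat\mu$ over $P:\hat\Sigma\to\Sigma^{-}$ must, by definition, have marginal exactly $P_{*}\hat\mu=\mu^{-}$ (since the conditional measures are probabilities); so either your construction does not produce a disintegration of $\hat\mu$, or the reweighting is trivial. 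The missing observation, used in the paper, is that $\mu^{-}=P_{*}\hat\mu$ together with the product structure on $[\xi]$ forces $Z(x^{-})=\int_{[\xi]}\psi(x^{-},y)\,d\mu(y)=1$ for $\mu^{-}$-a.e.\ (hence, by continuity, every) $x^{-}$: indeed $\mu^{-}(D)=\hat\mu(P^{-1}D)=\int_{D}Z(x^{-})\,d\mu^{-}(x^{-})$ for all $D\subset[\xi]^{-}$. With this in hand, the normalizing constants and their ratio drop out, the ``reweighted $\mu^{-}$'' is $\mu^{-}$ itself, and your Jacobian formula collapses to the clean form $J_{y^{-}\leftarrow x^{-}}(\hat{x})=\psi([y^{-},\hat{x}])/\psi([x^{-},\hat{x}])=\psi(h_{y^{-}\leftarrow x^{-}}(\hat x))/\psi(\hat x)$, which is what the paper writes. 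Everything else in your argument (the cylinderwise identification, the reduction of the holonomy to the identity on $[\xi]$ under that identification, continuity and positivity of $\psi$ giving continuity and boundedness of the Jacobians) is sound and matches the paper.
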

    \begin{proof}
        Due to the continuous local product structure, for every 0-cylinder $[\xi]\subset \hat{\Sigma}$ we have that $d\hat{\mu}|_{[\xi]}=\psi|_{[\xi]}d\mu^{-}\times d\mu$ where $\psi$ is continuous and positive. From $d\mu^{-}=P_{*} d\hat{\mu}$, we obtain that $\int_{\mathcal{W}^{u}_{loc}(x^{-})}\psi(\hat{x})d\mu=1$ for every $x^{-}$. Hence, we get that $d\hat{\mu}_{x^{-}}^{u}=\psi(x^{-},\cdot) d\mu$ is a probability measure and $J_{y^{-}\xleftarrow{}x^{-}}(\hat{x})=\psi(h_{y^{-}\xleftarrow{}x^{-}}(\hat{x}))/\psi(\hat{x})$, as in the statement. 
    \end{proof}

We will proceed to calculate the Jacobian of $\mu^{-}$ with respect to $T^{-1}:\Sigma^{-}\to \Sigma^{-}.$

\begin{lemma} \label{jacobiano2}
    There is a continuous positive Jacobian $J_{\mu^{-}}T^{-1}$ for the the measure $\mu^{-}=P_{*}\hat{\mu}$, that is, $J_{\mu^{-}}T^{-1}(y^{-1})$ is the local ratio of change in measure after applying the dynamics $T^{-1}$ to a small neighborhood of $y^{-}\in \Sigma^{-}$.
\end{lemma}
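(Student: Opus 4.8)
The plan is to identify $\mu^{-}=P_{*}\hat\mu$ with a one-sided Gibbs state and to read off the Jacobian from thermodynamic formalism, following \cite[Section~2.2]{bonatti_viana_2004}. By the previous Proposition, $\hat\mu$ is the equilibrium state of some Hölder potential $\varphi:\hat\Sigma\to\mathbb R$. The first step is the classical cohomology reduction for Hölder potentials on two-sided subshifts of finite type (Sinai's trick; see Bowen, or Parry--Pollicott): there is a Hölder function $u:\hat\Sigma\to\mathbb R$ such that $\tilde\varphi:=\varphi+u\circ T-u$ depends only on the nonpositive coordinates, hence factors as $\tilde\varphi=\varphi^{-}\circ P$ for a Hölder potential $\varphi^{-}:\Sigma^{-}\to\mathbb R$. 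Since cohomologous potentials share the same equilibrium state, and since $P$ is a semiconjugacy from $(\hat\Sigma,T^{-1})$ onto $(\Sigma^{-},T^{-1})$ (one checks $P\circ T^{-1}=T^{-1}\circ P$, so that $(\hat\Sigma,T^{-1})$ is the natural extension of $(\Sigma^{-},T^{-1})$), the measure $\mu^{-}=P_{*}\hat\mu$ is precisely the equilibrium state of $\varphi^{-}$ for the one-sided system $(\Sigma^{-},T^{-1})$.

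The second step is to apply the Ruelle--Perron--Frobenius theorem to the transfer operator $\mathcal L g(y^{-})=\sum_{T^{-1}z^{-}=y^{-}}e^{\varphi^{-}(z^{-})}g(z^{-})$ on $\Sigma^{-}$: it has a simple leading eigenvalue $\lambda=e^{P(\varphi^{-})}>0$ with a strictly positive Hölder eigenfunction $h$, together with a probability measure $\nu$ satisfying $\mathcal L^{*}\nu=\lambda\nu$, so that $\mu^{-}=h\,\nu$. The map $T^{-1}$ is injective on each $0$-cylinder of $\Sigma^{-}$, and for a Gibbs measure of this kind the Jacobian with respect to $T^{-1}$ is
$$J_{\mu^{-}}T^{-1}(y^{-})=\lambda\,e^{-\varphi^{-}(y^{-})}\,\frac{h(T^{-1}y^{-})}{h(y^{-})},$$
a product of strictly positive Hölder functions, hence continuous and strictly positive. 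That this function is the local ratio of change of measure in the statement is just the defining property of a Jacobian: it satisfies $\mu^{-}(T^{-1}E)=\int_{E}J_{\mu^{-}}T^{-1}\,d\mu^{-}$ for every Borel $E$ contained in a single $0$-cylinder, and Lebesgue differentiation along shrinking cylinders then yields $\mu^{-}(T^{-1}U)/\mu^{-}(U)\to J_{\mu^{-}}T^{-1}(y^{-})$ as $U\downarrow\{y^{-}\}$. (Equivalently, passing to the normalized potential $\phi=\varphi^{-}+\log h-\log h\circ T^{-1}-\log\lambda$, one has $\mathcal L_{\phi}1=1$ and $J_{\mu^{-}}T^{-1}=e^{-\phi}$.)

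The only genuinely delicate point is the first step, namely the passage from the two-sided equilibrium state to the one-sided one: one must know that the transfer function $u$ can be taken Hölder continuous and that the projected measure is again an equilibrium state with the same pressure. This is standard, and in the present setting it is carried out in \cite[Section~2.2]{bonatti_viana_2004}. If one prefers to stay closer to the tools already at hand, an alternative is to bypass the reduction and compute $J_{\mu^{-}}T^{-1}(y^{-})$ directly as the limit of ratios $\mu^{-}([y_{-1}\cdots y_{-n}])\,/\,\mu^{-}([y_{0}y_{-1}\cdots y_{-n}])$, using the Gibbs property of $\hat\mu$ (a consequence of the continuous local product structure with Hölder density, via Lemma~\ref{jacobiano1}) to see that this limit exists, is strictly positive, and depends Hölder continuously on $y^{-}$.
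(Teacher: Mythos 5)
Your proposal is correct, but it takes a genuinely different route from the paper. You reduce $\hat\mu$ to a one-sided equilibrium state via Sinai's cohomology trick and then read off $J_{\mu^-}T^{-1}$ from the Ruelle--Perron--Frobenius triple $(\lambda,h,\nu)$ as $\lambda\,e^{-\varphi^-}\,(h\circ T^{-1})/h$. The paper instead works directly with the continuous local product structure already established in Lemma~\ref{jacobiano1}: writing $d\hat\mu|_{[\xi]}=\psi\,d(\mu^-\times\mu)$ with $\psi$ positive and continuous, using the $T$-invariance of $\hat\mu$ to express both $\mu^-(T^{-1}D)$ and $\mu^-(D)$ as integrals of $\psi$ over the same set $T^{-1}D$ (the two differ only in the range of the $\Sigma$-variable, $[\xi_1]$ versus $[\xi_1\xi_0]$), and then letting $D$ shrink to $\{y^-\}$ to obtain
\[
J_{\mu^-}T^{-1}(y^-)=\Bigl(\textstyle\int_{\mathcal{W}^u_{loc}(T^{-1}y^-)\cap[\xi_1\xi_0]}\psi(T^{-1}y^-,w)\,d\mu(w)\Bigr)^{-1},
\]
which is manifestly positive and continuous because $\psi$ is. This is essentially the ``alternative'' you sketch in your last sentence. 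The two approaches buy slightly different things: yours is more structural, invokes heavier machinery (Sinai's reduction plus the RPF theorem), and yields the stronger conclusion that the Jacobian is H\"older continuous rather than merely continuous; the paper's is shorter, stays entirely within the local-product-structure framework already set up in this section, and avoids re-deriving the Gibbs/transfer-operator picture that has already been packaged into the hypothesis of continuous local product structure with H\"older density. Either argument establishes the lemma.
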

\begin{proof}
    Denote $y^{-}\in D\subset \Sigma^{-}$, where $D$ is a measurable set contained in the cylinder $[\xi_1\xi_0]$. Then, due to the continuous local product structure and the $T-$invariance of the measure
    \begin{align*}
        \mu^{-}(T^{-1}D)=\hat{\mu}(P^{-1}(T^{-1}D))&=\int_{\{x^{-}\in T^{-1}D,\text{ } w \in [\xi_1]\subset \Sigma\}}\psi(x^{-}, w)d\mu^{-}(x^{-})d\mu(w),\\
        \mu^{-}(D)=\hat{\mu}(P^{-1}D)=\hat{\mu}(T^{-1 }P^{-1}D)&=\int_{\{x^{-}\in T^{-1}D, \text{ } w \in [\xi_1\xi_0]\subset \Sigma\}, }\psi(x^{-}, w)d\mu^{-}(x^{-})d\mu(w). 
        \intertext{Taking $D\to\{y^{-}\}$, we obtain}
        \dfrac{\mu^{-}(T^{-1}D)}{\mu^{-}(D)}&\to\dfrac{\int_{\mathcal{W}^{u}_{loc}(T^{-1}y^{-})}\psi(T^{-1}(y^{-}), w)d\mu(w)}{\int_{\mathcal{W}^{u}_{loc}(T^{-1}y^{-})\cap [\xi_1\xi_0]}\psi(T^{-1}(y^{-}), w)d\mu(w)}\\
        &=\dfrac{1}{\int_{\mathcal{W}^{u}_{loc}(T^{-1}y^{-})\cap [\xi_1\xi_0]}\psi(T^{-1}(y^{-}), w)d\mu(w)}\\
        &=J_{\mu^{-}}T^{-1}(y^{-}).
    \end{align*}
    From the calculation, we see that $J_{\mu^{-}}T^{-1}(y^{-})$ is the expected ratio of measures. Moreover, since $\psi$ is a positive continuous function, we can conclude the same properties for the Jacobian. 
\end{proof}

We define the projection $\Pi:\hat{\Sigma}\times\mathbb{RP}^{d-1}\to \Sigma^{-}\times \mathbb{RP}^{d-1}$ by $\Pi(\hat{x}, \overline{v})=(x^{-},\overline{v})$. Hence, for every s-state, we can define the projection to the one-sided shift by $m=\Pi_{*}\hat{m}$, which we will call an s-state on $\Sigma^{-}\times \mathbb{RP}^{d-1}$. We will show that such one-sided s-states have a continuous disintegration in the weak topology. To obtain that result, we will need the following lemma. 
\begin{lemma} \label{desintegracionm}
    Consider $\hat{m}$ a $T$-invariant measure on $\hat{\Sigma}\times \mathbb{RP}^{d-1}$ which projects to $\hat{\Sigma}$ as $\hat{\mu}$ and define $m=\Pi_{*}\hat{m}$. Consider the partitions $\{\hat\pi^{-1}(\hat{x}): \hat{x}\in \hat{\Sigma}\}$ and $\{\mathcal{W}_{loc}^{u}(x^{-}):x^{-}\in \Sigma^{-})\}$ and the respective disintegration $(\hat{m}_{\hat{x}})_{\hat{x}}$ and $(\hat{\mu}^{u}_{x^{-}})_{x^{-}}$. Then, given the projection $\pi^{-}:\Sigma^{-}\times \mathbb{RP}^{d-1}\to\Sigma^{-}$, we can define a disintegration specified by the partition $\{\pi^{-1}(x^{-}): x^{-}\in \Sigma^{-}\}$ as
    $$m_{x^{-}}=\int \hat{m}_{\hat{x}}d\hat{\mu}^{u}_{x^{-}}(\hat{x})$$
\end{lemma}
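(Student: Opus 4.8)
The plan is to check that the family $(m_{x^-})_{x^-\in\Sigma^-}$ defined by $m_{x^-}=\int \hat m_{\hat x}\,d\hat\mu_{x^-}(\hat x)$ satisfies the three defining properties of the Rokhlin disintegration of $m=\Pi_*\hat m$ over the partition $\{\pi^{-1}(x^-)\}$, and then to invoke the uniqueness of disintegrations (note that $T$-invariance of $\hat m$ is not needed in this lemma). Since $\hat\mu_{x^-}$ is carried by $\mathcal{W}^u_{loc}(x^-)$ and each $\hat m_{\hat x}$ is carried by the fiber $\hat\pi^{-1}(\hat x)=\{\hat x\}\times\mathbb{RP}^{d-1}$, after identifying every such fiber with $\mathbb{RP}^{d-1}$ and then with $\pi^{-1}(x^-)=\{x^-\}\times\mathbb{RP}^{d-1}$, the integral $m_{x^-}$ is a Borel probability measure carried by $\pi^{-1}(x^-)$. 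Measurability of $x^-\mapsto m_{x^-}$ in the weak topology follows from measurability of $\hat x\mapsto\hat m_{\hat x}$ together with the continuous (hence measurable) dependence of $\hat\mu_{x^-}$ on $x^-$ furnished by Lemma \ref{jacobiano1}: for each $\psi\in C(\mathbb{RP}^{d-1})$ the map $x^-\mapsto\int\psi\,dm_{x^-}=\int\bigl(\int\psi\,d\hat m_{\hat x}\bigr)\,d\hat\mu_{x^-}(\hat x)$ is measurable.

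It remains to verify the averaging identity. Let $\phi:\Sigma^-\times\mathbb{RP}^{d-1}\to\mathbb{R}$ be bounded and measurable. Using in turn that $m=\Pi_*\hat m$, that $(\hat m_{\hat x})_{\hat x}$ disintegrates $\hat m$ over the fiber partition $\{\hat\pi^{-1}(\hat x)\}$, and that $(\hat\mu_{x^-})_{x^-}$ disintegrates $\hat\mu$ over $\{\mathcal{W}^u_{loc}(x^-)\}$ (Lemma \ref{jacobiano1}), we obtain
\begin{align*}
\int\phi\,dm&=\int_{\hat\Sigma\times\mathbb{RP}^{d-1}}\phi(P\hat x,\overline v)\,d\hat m(\hat x,\overline v)=\int_{\hat\Sigma}\Bigl(\int_{\mathbb{RP}^{d-1}}\phi(P\hat x,\overline v)\,d\hat m_{\hat x}(\overline v)\Bigr)d\hat\mu(\hat x)\\
&=\int_{\Sigma^-}\Bigl(\int_{\mathcal{W}^u_{loc}(x^-)}\int_{\mathbb{RP}^{d-1}}\phi(P\hat x,\overline v)\,d\hat m_{\hat x}(\overline v)\,d\hat\mu_{x^-}(\hat x)\Bigr)d\mu^-(x^-)\\
&=\int_{\Sigma^-}\Bigl(\int_{\mathcal{W}^u_{loc}(x^-)}\int_{\mathbb{RP}^{d-1}}\phi(x^-,\overline v)\,d\hat m_{\hat x}(\overline v)\,d\hat\mu_{x^-}(\hat x)\Bigr)d\mu^-(x^-)\\
&=\int_{\Sigma^-}\Bigl(\int_{\mathbb{RP}^{d-1}}\phi(x^-,\overline v)\,dm_{x^-}(\overline v)\Bigr)d\mu^-(x^-),
\end{align*}
where $\mu^-=P_*\hat\mu$, the fourth equality uses $P\hat x=x^-$ for $\hat x\in\mathcal{W}^u_{loc}(x^-)$ so that $\phi$ sees only $x^-$, and the last equality is the definition of $m_{x^-}$ together with Fubini. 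Since also $\pi_*m=\mu^-$, this is precisely the averaging property of a disintegration of $m$ over $\{\pi^{-1}(x^-)\}$ with base $\mu^-$, and uniqueness finishes the proof.

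The only points requiring care are the Fubini-type interchanges of integrals — legitimate because $\phi$ is bounded, all measures live on compact metric spaces, and the disintegrations are genuine Borel families of probability measures — and the routine but slightly delicate bookkeeping identifying the fibers $\{\hat x\}\times\mathbb{RP}^{d-1}$, $\mathbb{RP}^{d-1}$, and $\{x^-\}\times\mathbb{RP}^{d-1}$ through the map $\Pi$, which collapses the $\hat x$-coordinate within each set $\hat\pi^{-1}(\mathcal{W}^u_{loc}(x^-))$. I expect this identification, rather than any substantive analytic obstacle, to be the main thing that must be stated precisely.
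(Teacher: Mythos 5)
Your proof is correct and follows essentially the same route as the paper's: both verify the averaging identity by composing the disintegration of $\hat m$ over the fibers $\{\hat x\}\times\mathbb{RP}^{d-1}$ with the disintegration of $\hat\mu$ over unstable sets and then observing that the test function pulled back by $\Pi$ is constant along each $\mathcal{W}^u_{loc}(x^-)$. You are a bit more explicit about the Rokhlin framework (measurability of $x^-\mapsto m_{x^-}$, invoking uniqueness, and noting that $T$-invariance plays no role), but the computational core is identical to the paper's short calculation.
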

\begin{proof}
    Consider $g:\Sigma^{-}\times \mathbb{RP}^{d-1}\to\mathbb{R}$ and $\hat{g}=g\circ \Pi:\hat{\Sigma}\times\mathbb{RP}^{d-1} \to \mathbb{R}$. 
    \begin{align*}
    \int g dm= \int g (\Pi_{*})d\hat{m}=\int \hat{g}d\hat{m}&=\int\left(\int \hat{g}dm_{\hat{x}}\right)d\hat{\mu}(\hat{x})\\
    &=\int\int\left(\int \hat{g}dm_{\hat{x}}\right)d\hat{\mu}^{u}_{x^{-}}(\hat{x})d\mu^-(x^{-}),
    \intertext{since $\hat{g}$ is constant along unstable sets, we obtain that}
    &=\int\left(\int gdm_{\hat{x}}d\hat{\mu}^{u}_{x^{-}}(\hat{x})\right)d\mu^-(x^{-})\\
    &=\int \int g dm_{x^{-}}d\mu^-(x^{-}). 
\end{align*}
The result yields from the calculation.  
\end{proof}

\begin{lemma} \label{weakcontinuous}
    Let $m$ be a s-state on $\Sigma^{-}\times \mathbb{RP}^{d-1}$. Then $m$ admits a continuous disintegration $(m_{x^{-}})_{x^{-}}$ along $\{\pi^{-1}(x^{-}): x^{-}\in \Sigma^{-}\}$. That is, $x^{-}\to \int gdm_{x^{-}}$ is continuous on $\Sigma^{-}$ for all continuous functions $g:\mathbb{RP}^{d-1}\to\mathbb{R}$. 
\end{lemma}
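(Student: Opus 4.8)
The plan is to reduce everything to the continuity of the conditional measures $\hat\mu_{x^-}$ furnished by Lemma~\ref{jacobiano1}, exploiting the one feature that distinguishes an s-state from an arbitrary invariant measure: its disintegration only depends on the ``future'' coordinates.

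First I would fix an invariant s-state $\hat m$ on $\hat\Sigma\times\mathbb{RP}^{d-1}$ with $m=\Pi_*\hat m$, together with a disintegration $(\hat m_{\hat x})_{\hat x}$ as in the definition, so that $\hat m_{\hat y}=\hat m_{\hat x}$ whenever $\hat y\in\mathcal{W}^s_{loc}(\hat x)$. Since $\mathcal{W}^s_{loc}(\hat x)$ is exactly the set of points sharing the nonnegative coordinates of $\hat x$, this means $\hat m_{\hat x}$ depends only on $\pi\hat x\in\Sigma$; I write $\hat m_{\hat x}=\hat m^{\pi\hat x}$, the map $x\mapsto\hat m^x$ being $\mu$-measurable. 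For a fixed continuous $g:\mathbb{RP}^{d-1}\to\mathbb{R}$ I set $\phi(x)=\int g\,d\hat m^x$, which is $\mu$-measurable with $\|\phi\|_\infty\le\|g\|_\infty$.

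Next I would invoke Lemma~\ref{desintegracionm} to express the disintegration of $m$ along $\{\pi^{-1}(x^-)\}$ as $m_{x^-}=\int_{\mathcal{W}^u_{loc}(x^-)}\hat m_{\hat x}\,d\hat\mu_{x^-}(\hat x)$, a formula that makes sense for \emph{every} $x^-$ because $\hat\mu_{x^-}$ is defined for every $x^-$ through the continuous density $\psi$. Working inside a single $0$-cylinder $[\xi]^-\subseteq\Sigma^-$ and taking $x^-\in[\xi]^-$, I identify $\mathcal{W}^u_{loc}(x^-)$ with the cylinder $[\xi]\subseteq\Sigma$ via $\hat y\mapsto\pi\hat y$; under this identification $\hat m_{\hat y}$ becomes $\hat m^{\pi\hat y}$ and, by the computation in the proof of Lemma~\ref{jacobiano1}, $\hat\mu_{x^-}$ becomes $\psi(x^-,\cdot)\,d\mu$ on $[\xi]$. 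Hence
\[
\int g\,dm_{x^-}=\int_{[\xi]}\phi(w)\,\psi(x^-,w)\,d\mu(w),
\]
where $\psi(x^-,w)$ denotes $\psi$ evaluated at the two-sided point with negative part $x^-$ and nonnegative part $w$. The crucial gain is that the factor $\phi$ is now the \emph{same} function for every $x^-$, so that the entire $x^-$-dependence is carried by the continuous positive density $\psi$. Then for $x^-,y^-\in[\xi]^-$ the points $(x^-,w)$ and $(y^-,w)$ agree on every coordinate where $x^-$ and $y^-$ agree (and on all nonnegative ones), so uniform continuity of $\psi$ on the compact space $\hat\Sigma$, with modulus $\omega$, gives $\sup_{w\in[\xi]}|\psi(x^-,w)-\psi(y^-,w)|\le\omega(\iota(x^-,y^-))$; combined with $|\phi|\le\|g\|_\infty$ this yields $\bigl|\int g\,dm_{x^-}-\int g\,dm_{y^-}\bigr|\le\|g\|_\infty\,\omega(\iota(x^-,y^-))\to 0$ as $y^-\to x^-$. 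Since $\Sigma^-$ is the disjoint union of the finitely many clopen cylinders $[\xi]^-$, continuity on each of them gives continuity on all of $\Sigma^-$, which is the assertion.

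I expect the only delicate point to be bookkeeping rather than analysis: making precise the canonical identification of $\mathcal{W}^u_{loc}(x^-)$ with $[\xi]\subseteq\Sigma$ (it respects the common zeroth coordinate, which is precisely why continuity need only be checked cylinder by cylinder, not across different $0$-cylinders where the bracket is undefined), and checking that under this identification $\hat\mu_{x^-}$ really is $\psi(x^-,\cdot)\,d\mu$ with one and the same $\phi$ for all $x^-$. Once that is set up, the estimate is immediate and uses nothing beyond the continuity of $\psi$ supplied by the continuous local product structure; in particular neither the dynamics nor Lemma~\ref{jacobiano2} enters here.
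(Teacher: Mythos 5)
Your argument is correct and follows essentially the same route as the paper: use the s-state property to make $\hat{m}_{\hat{x}}$ depend only on $\pi\hat{x}$, express $m_{x^-}$ via Lemma~\ref{desintegracionm}, and transfer continuity from the local product density $\psi$. The only superficial difference is that the paper controls the variation by the Jacobian $J_{x^-\leftarrow y^-}=\psi\circ h_{x^-\leftarrow y^-}/\psi$ and bounds $|J-1|$, whereas you parametrize both integrals over $[\xi]\subseteq\Sigma$ and bound $\sup_w|\psi(x^-,w)-\psi(y^-,w)|$ directly; since $\psi$ is continuous and positive on the compact space $\hat{\Sigma}$, these estimates are equivalent, and your observation that Lemma~\ref{jacobiano2} plays no role here is accurate.
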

\begin{proof}
    Consider a disintegration $\hat{m}_{\hat{x}}$ of $\hat{m}$ where $m=\Pi_{*}\hat{m}$. Additionally, consider $(d\hat{\mu}^{u}_{x^{-}})_{x^{-}}=(\psi(x^{-}, \cdot)_{x^{-}} d\mu)$ as in Lemma \ref{jacobiano1} and $(m_{x^{-}})_{x^{-}}$ a disintegration of $m$ as in Lemma \ref{desintegracionm}. Consider $x^{-},y^{-}\in [\xi_{0}]\subseteq \Sigma^{-}$ and $\hat{x}\in \mathcal{W}^{u}_{loc}(x^{-})$. Construct $\hat{y}=[y^{-},\hat{x}]$. Then, since $m$ is an s-state, for any continuous function $g:\mathbb{RP}^{d-1}\to\mathbb{R}$
    \begin{align*}
        \int g dm_{y^{-}}&=\int \int gdm_{\hat{y}}d\hat{\mu}^{u}_{y^{-}}(\hat{y})\\
        &=\int \int gdm_{\hat{x}}J_{x^{-}\xleftarrow[]{}y^{-}}(\hat{x})d\hat{\mu}^{u}_{x^{-}}(\hat{x}).
    \end{align*}
    By Lemma \ref{jacobiano1} we know  $J_{x^{-}\xleftarrow[]{}y^{-}}$ is continuous with respect to $x^{-},y^{-}$, hence, for $\iota(x^{-},y^{-})<\delta$ such that $|J_{x^{-}\xleftarrow[]{}y^{-}}(\hat{x})-J_{x^{-}\xleftarrow[]{}x^{-}}(\hat{x})|=|J_{x^{-}\xleftarrow[]{}y^{-}}(\hat{x})-1|<\epsilon$, we obtain 
    \begin{align*}
        \left| \int g dm_{y^{-}}-\int g dm_{x^{-}}\right|&\leq \norm{g}_{\infty}\int |J_{x^{-}\xleftarrow[]{}y^{-}}(\hat{x})-1|dm_{\hat{x}}d\hat{\mu}^{u}_{x^{-}}(\hat{x})\\
        &\leq \norm{g}_{\infty}\epsilon.
    \end{align*}
\end{proof}

\subsection{Weightlessness of hyperplanes}

The following result is based on \cite[Proposition~5.1]{bonatti_viana_2004}, where they obtain a similar conclusion about the weightlessness of planes under pinching and twisting (see Remark \ref{pinchingtwisting}).

\begin{theorem} \label{pesoCero}
    Consider a strongly irreducible 2-step cocycle $A:\hat{\Sigma}\to \text{GL}(d,\mathbb{R})$ whose first Lyapunov exponent is simple. Then, for every invariant s-state  measure $\hat{m}$ on $\hat{\Sigma}\times \mathbb{RP}^{d-1}$ with projection $m=\Pi_{*}\hat{m}$ and continuous disintegration $(m_{x})_{x^{-}\in\Sigma^{-}}$, the measure gives zero weight to hyperplanes, that is
    $$m_{x^{-}}(V)=0,$$
    for every $x^{-}\in\Sigma^{-}$ and every proper projective subspace $V\subseteq \mathbb{RP}^{d-1}$.
\end{theorem}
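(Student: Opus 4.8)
The plan is to argue by contradiction, following \cite[Proposition~5.1]{bonatti_viana_2004}. Suppose Theorem~\ref{pesoCero} fails, so that some $m_{x^{-}}$ charges a proper projective subspace, and let $l\in\{1,\dots,d-1\}$ be minimal with
\[
\beta:=\sup_{x^{-}\in\Sigma^{-}}\beta_{l}(x^{-})>0,\qquad \beta_{l}(x^{-}):=\sup\bigl\{\,m_{x^{-}}(V):V\ \text{an }l\text{-dimensional projective subspace}\,\bigr\}.
\]
By minimality of $l$, every projective subspace of dimension $<l$ is $m_{x^{-}}$-null, for every $x^{-}$. Since $V\mapsto m_{x^{-}}(V)$ is upper semicontinuous on the compact Grassmannian $\text{Grass}(l,\mathbb{R}^{d})$ and, by Lemma~\ref{weakcontinuous}, $x^{-}\mapsto m_{x^{-}}$ is weak-$*$ continuous, the function $(x^{-},V)\mapsto m_{x^{-}}(V)$ is upper semicontinuous on $\Sigma^{-}\times\text{Grass}(l,\mathbb{R}^{d})$; hence both $\beta_{l}(x^{-})$ and $\beta$ are attained, the latter on a nonempty closed set $\Sigma^{-}_{\star}=\{x^{-}:\beta_{l}(x^{-})=\beta\}$. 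For $x^{-}\in\Sigma^{-}_{\star}$ let $\mathcal V(x^{-})$ be the set of $l$-dimensional subspaces realizing $\beta$; it is finite with $\#\mathcal V(x^{-})\le\lfloor 1/\beta\rfloor$, since distinct maximizers meet in a subspace of dimension $<l$, which is null, so the union of $N$ of them carries mass $N\beta\le 1$.

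The heart of the argument is to propagate $\mathcal V$ by the cocycle. From $F_{A}$-invariance of $\hat m$ one gets $(A(\hat x))_{*}\hat m_{\hat x}=\hat m_{T\hat x}$; since the disintegration is constant on local stable sets, $\hat m_{\hat x}$ depends only on the future of $\hat x$, and since $A$ is $2$-step, $A(\hat x)=A(x_{0},x_{1})$ depends only on future coordinates. Inserting this in the formula of Lemma~\ref{desintegracionm} for $m_{x^{-}}$ and splitting according to the coordinate that follows $x^{-}_{0}$ yields, for $\mu^{-}$-a.e.\ $x^{-}$, a decomposition
\[
m_{x^{-}}=\sum_{b}p_{b}(x^{-})\,\rho_{b}(x^{-}),\qquad\sum_{b}p_{b}(x^{-})=1,
\]
over the admissible extensions $x^{-}b$, where each $\rho_{b}(x^{-})$ is comparable to $(A(x^{-}_{0},b)^{-1})_{*}m_{x^{-}b}$ up to a multiplicative constant depending only on the Hölder density $\psi$ and the Jacobians of Lemmas~\ref{jacobiano1}--\ref{jacobiano2}. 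Since $A(g)_{*}$ permutes the $l$-dimensional subspaces, this already gives $\beta_{l}(x^{-})\le C\sum_{b}p_{b}(x^{-})\,\beta_{l}(x^{-}b)$. The gap hypothesis $\lambda_{1}(\hat\mu)>\lambda_{2}(\hat\mu)$ enters here: it forces the forward iterates $A^{(n)}(T^{-n}\hat x)$ to concentrate along the top Oseledets direction $E_{1}$, which should allow one to absorb the multiplicative distortion, pin down the maximizers, show that $\Sigma^{-}_{\star}$ is stable under the extensions $x^{-}\mapsto x^{-}b$ (so that $\Sigma^{-}_{\star}=\Sigma^{-}$, as it is closed and the extensions of any point are dense in $\Sigma^{-}$), and establish the equivariance $\mathcal V(x^{-}b)=A(x^{-}_{0},b)\,\mathcal V(x^{-})$. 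Using topological mixing (primitivity of $P$) together with the local constancy of $A$, one then reduces $\mathcal V$ to a family $V_{1},\dots,V_{m}\colon\hat\Sigma\to\text{Grass}(l,\mathbb{R}^{d})$ constant on each $1$-cylinder $[x_{0}]$ with
\[
A(\hat x)\bigl(V_{1}(\hat x)\cup\dots\cup V_{m}(\hat x)\bigr)=V_{1}(T\hat x)\cup\dots\cup V_{m}(T\hat x),\qquad\hat x\in\hat\Sigma.
\]
Such a family is precisely what strong irreducibility of the $2$-step cocycle $A$ excludes (as observed right after the definition of ``not strongly irreducible''), so $\beta=0$ --- a contradiction. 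Therefore every proper projective subspace is $m_{x^{-}}$-null for every $x^{-}$, which is the assertion of Theorem~\ref{pesoCero}.

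The step I expect to be the main obstacle is this propagation of $\mathcal V$. The structural difficulty is that $m_{x^{-}}$ is the conditional of the $s$-state along the \emph{past}, while the exact cocycle relation $(A(\hat x))_{*}\hat m_{\hat x}=\hat m_{T\hat x}$ lives on the \emph{future} conditionals; consequently the pushforward identity for the family $(m_{x^{-}})$ holds only up to the bounded but non-constant Gibbs density. Checking that this distortion does not displace the maximizing subspaces --- and upgrading the resulting continuous equivariant family to a genuinely locally constant one --- are the delicate points, and both are where the gap $\lambda_{1}(\hat\mu)>\lambda_{2}(\hat\mu)$ must be used.
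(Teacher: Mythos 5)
Your high-level framework matches the paper's: argue by contradiction, take $l$ minimal so that some $m_{x^{-}}$ charges an $l$-dimensional projective subspace, use upper semicontinuity plus weak-$*$ continuity of $x^{-}\mapsto m_{x^{-}}$ to see the supremum $\gamma_{0}$ is attained, observe that the maximizers at a given $x^{-}$ form a finite family (by minimality of $l$, two maximizers meet in a null set), and then build an equivariant locally constant family of subspaces, contradicting strong irreducibility. So the scaffolding is right.

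The genuine gap is in how you propagate the maximizers. You assert that the pushforward relation linking $m_{x^{-}}$ to the family $\{m_{y^{-}}:Ty^{-}=x^{-}\}$ holds only ``up to a multiplicative constant'' coming from the Gibbs density, and then you invoke the Lyapunov gap $\lambda_{1}>\lambda_{2}$ to ``absorb the multiplicative distortion'' and pin down the maximizers. This is where your proposal breaks down. The correct relation is an \emph{exact convex combination}: $F_{A}$-invariance of $\hat m$ descends to $F_{A}^{-1}$-invariance of $m$ on $\Sigma^{-}\times\mathbb{RP}^{d-1}$, and together with the Jacobian from Lemma~\ref{jacobiano2} it yields, for every $x^{-}$ (after passing to the continuous disintegration),
\[
m_{x^{-}} \;=\; \sum_{Ty^{-}=x^{-}}\frac{1}{J_{\mu^{-}}T^{-1}(y^{-})}\,\bigl(A^{(-1)}(y^{-})\bigr)_{*}m_{y^{-}},\qquad \sum_{Ty^{-}=x^{-}}\frac{1}{J_{\mu^{-}}T^{-1}(y^{-})}=1.
\]
Because the weights sum to $1$ and each $m_{y^{-}}(A^{(-1)}(y^{-})^{-1}V)\le\gamma_{0}$, the equality $m_{x^{-}}(V)=\gamma_{0}$ forces $m_{y^{-}}(A^{(-1)}(y^{-})^{-1}V)=\gamma_{0}$ for \emph{every} preimage $y^{-}$, with no distortion to absorb. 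The Lyapunov gap plays no role here; it is not what makes this step work. (In the paper's argument the gap hypothesis is not invoked inside this proof at all.)

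You are also missing the second key ingredient: after establishing the exact propagation (and that the set of points attaining $\gamma_{0}$ is all of $\Sigma^{-}$, via closedness and density of backward orbits), the paper works at a periodic point $p^{-}$ and a homoclinic point $z^{-}$ asymptotic to it. Using the bracket and the fact that $\hat m$ is an $s$-state, one transfers the maximizing family from $\mathcal W^{u}_{loc}(p^{-})$ to $\mathcal W^{u}_{loc}(z^{-})$, deduces that the finite union $U=V_{1}\cup\dots\cup V_{k}$ satisfies $U\subseteq (A^{(-t)}(z^{-}))^{-1}U$, hence equality, and from this defines a map $L$ on the symbol set $X$ by $L_{x^{-}}(\xi)=A^{(-s)}(x^{-})U$. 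Well-definedness (independence of the chosen homoclinic $x^{-}$) and equivariance $L(\xi_{2})=A^{(-1)}(T^{-s}x^{-})L(\xi_{1})$ are then checked by hand using primitivity. This is the step that upgrades the family from ``something continuous living on $\Sigma^{-}$'' to a genuinely \emph{locally constant} family on $1$-cylinders, which is exactly what strong irreducibility of the $2$-step cocycle forbids. Your proposal gestures at this last upgrade but gives no mechanism, and your proposed mechanism (the Lyapunov gap) is not the one that works.

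In short: fix the propagation by using the exact convex-combination identity (no gap hypothesis needed), and supply the periodic/homoclinic construction of the locally constant equivariant family; with those two pieces your outline becomes the paper's proof.
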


We will prove this theorem by contradiction. Suppose that there is some $x^{-}\in\Sigma^{-}$
and some minimal $l\in\mathbb{N}$ such that $m_{x^{-}}(V)>0$, where $V$ is a $l$-dimensional proper subspace. In particular, we consider
$$\gamma_{0}=\sup_{V\in \text{Grass}(l,d)}\sup_{x^{-}\in \Sigma^{-}}m_{x^{-}}(V).$$
Since the conditional measures vary continuously with $x^{-}\in\Sigma^{-}$ and $\text{Grass}(d,l)$ is compact, such supremum is attained. 

For completeness of the proof, we will include the following lemmas obtained by Bonnati and Viana \cite{bonatti_viana_2004} which we will need for our setting.
\begin{lemma}\label{lemmazeroweight}
    For every $x^{-}\in\Sigma^{-}$, there exist some $l$-dimensional subspace $V$ such that $m_{x^{-}}(V)=\gamma_{0}$. Also, $m_{x^{-}}(V)=\gamma_{0}$ if and only if  $m_{y^{-}}(A^{(-1)}(y^{-})^{-1}V)=\gamma_{0}$ for every $y^{-}\in T(x^{-})$.
\end{lemma}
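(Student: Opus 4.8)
The plan is to derive an equivariance identity for the conditional measures $m_{x^-}$ under the one-sided dynamics induced by $A^{(-1)}$, and then to use it twice: it yields the ``if and only if'' part directly, and, combined with topological mixing, it forces the maximal value $\gamma_0$ to be attained at every base point.

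Consider the skew product $G\colon\Sigma^-\times\mathbb{RP}^{d-1}\to\Sigma^-\times\mathbb{RP}^{d-1}$ given by $G(x^-,\overline v)=(T^{-1}x^-,\overline{A^{(-1)}(x^-)v})$; this makes sense because the cocycle is $2$-step, so $A^{(-1)}(x^-)$ depends only on $x^-_{-1},x^-_0$. A direct check gives $\Pi\circ F_A^{-1}=G\circ\Pi$, hence $G_*m=G_*\Pi_*\hat m=\Pi_*(F_A^{-1})_*\hat m=\Pi_*\hat m=m$, so $m$ is $G$-invariant. Since $T^{-1}\colon\Sigma^-\to\Sigma^-$ is finite-to-one with $T^{-1}_*\mu^-=\mu^-$, disintegrating $\mu^-$ along the finite fibres $T(z^-)=(T^{-1})^{-1}(z^-)$ turns $G$-invariance of $m$ into
$$m_{z^-}=\sum_{y^-\in T(z^-)}p_{z^-}(y^-)\,\bigl(A^{(-1)}(y^-)\bigr)_*m_{y^-},$$
where the weights $p_{z^-}(y^-)$ are positive, sum to $1$, and depend continuously on $z^-$; their positivity and continuity come from the positivity of the density $\psi$ together with the Jacobian $J_{\mu^-}T^{-1}$ of Lemma~\ref{jacobiano2}. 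A priori this holds only for $\mu^-$-a.e.\ $z^-$, but both sides are weakly continuous in $z^-$ (by Lemma~\ref{weakcontinuous} and the local constancy of $A$) and $\mu^-$ has full support, so it holds for every $z^-$. Evaluating on an $l$-dimensional subspace $V$, and using $(A^{(-1)}(y^-))_*m_{y^-}(V)=m_{y^-}\bigl(A^{(-1)}(y^-)^{-1}V\bigr)$, we get
$$m_{z^-}(V)=\sum_{y^-\in T(z^-)}p_{z^-}(y^-)\,m_{y^-}\!\bigl(A^{(-1)}(y^-)^{-1}V\bigr).$$
Each $A^{(-1)}(y^-)^{-1}V$ is again an $l$-dimensional subspace, so every summand is $\le\gamma_0$; as the weights are positive and sum to $1$, we obtain $m_{z^-}(V)\le\gamma_0$, with equality if and only if each summand equals $\gamma_0$. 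This is exactly the ``if and only if'' assertion of the lemma.

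For the first assertion, set $f(x^-)=\max_{V\in\text{Grass}(l,d)}m_{x^-}(V)$. From the weak continuity of $x^-\mapsto m_{x^-}$ and the portmanteau theorem one checks that $(x^-,V)\mapsto m_{x^-}(V)$ is upper semicontinuous; since $\text{Grass}(l,d)$ is compact, the maximum defining $f$ is attained for every $x^-$ and $f$ itself is upper semicontinuous. As $f\le\gamma_0$ everywhere and $\gamma_0=\sup_{x^-}f(x^-)$ is attained, the set $Z=\{x^-:f(x^-)=\gamma_0\}$ is closed and nonempty. If $x^-\in Z$, with witnessing subspace $V$, the ``if and only if'' part gives $m_{y^-}(A^{(-1)}(y^-)^{-1}V)=\gamma_0$ for every $y^-\in T(x^-)$, hence $f(y^-)=\gamma_0$ and $T(x^-)\subseteq Z$; iterating, all $n$-step $T^{-1}$-preimages of a fixed $x_*^-\in Z$ lie in $Z$. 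Because $P$ is primitive, the union of these preimage sets is dense in $\Sigma^-$ (a point of the $n$-th preimage set agrees with a fixed shifted copy of $x_*^-$ on all coordinates $\le -n$, and its remaining coordinates can be filled in along an admissible path once $n$ is large); since $Z$ is closed this forces $Z=\Sigma^-$, i.e.\ $f\equiv\gamma_0$, which is the first assertion.

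The step I expect to be most delicate is the first displayed equivariance identity: transporting the $F_A$-invariance of $\hat m$ to $\Sigma^-\times\mathbb{RP}^{d-1}$ through $\Pi$, disintegrating $\mu^-$ correctly along the branches of $T^{-1}$ using the Jacobian of Lemma~\ref{jacobiano2} so that the weights $p_{z^-}$ come out positive and continuous, and promoting the resulting almost-everywhere identity to one valid at every point via Lemma~\ref{weakcontinuous}. Granting that identity, both parts of the lemma follow from the soft convexity and mixing arguments above.
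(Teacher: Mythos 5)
Your proposal is correct and follows essentially the same route as the paper: project the $F_A$-invariance of $\hat m$ to the one-sided skew product, derive the equivariance identity $m_{x^-}=\sum_{y^-\in T(x^-)}J_{\mu^-}T^{-1}(y^-)^{-1}\,(A^{(-1)}(y^-))_*m_{y^-}$, promote it to every point via the weak continuity of Lemma~\ref{weakcontinuous}, deduce the ``if and only if'' from convexity (positive weights summing to $1$, each term $\le\gamma_0$), and then conclude by observing that the set where $\gamma_0$ is attained is closed, stable under $T^{-1}$-preimages, and nonempty, hence all of $\Sigma^-$ by topological mixing. The only (minor) difference is that you spell out the positivity and continuity of the branch weights and the upper semicontinuity of $(x^-,V)\mapsto m_{x^-}(V)$, which the paper leaves implicit.
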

\begin{proof}
    Consider $G\subseteq \Sigma^{-}$ the subset of points such that for all $x^{-}\in G$ there exists some $l$-dimensional projective subspace such that $m_{x^{-}}(V)=\gamma_{0}$. From Lemma \ref{weakcontinuous}, we know that the conditional measures $m_{x^{-}}$ are continuous. Thus, since the set of $l$-dimensional projective subspaces is compact, we can infer that $G$ is a closed subset of $\Sigma^{-}$.\\
    Since $\hat{m}$ is $F_{A}$ invariant, then $m$ is invariant for the skew product $F_{A}^{-}(x^{-},v)=(T^{-1}x^{-}, A^{(-1)}(x^{-}))$ defined on $\Sigma^{-}\times \mathbb{RP}^{d-1}$. From the invariance, for $\mu^{-}$-almost every $x^{-}\in\Sigma^{-}$ and any disintegration $(m_{x^{-}})_{x^{-}\in \Sigma^{-}}$ we obtain the equality
    $$\sum_{Tx^{-}=y^{-}}\dfrac{1}{J_{\mu^{-}}T^{-1}(y^{-})}A^{(-1)}(y^{-})_{*}m_{y^{-}}=m_{x^{-}}.$$
    In particular, taking a continuous disintegration, we can ensure that the previous equality happens for all $x^{-}$. 

    We see that $\sum_{Tx^{-}=y^{-}}(J_{\mu^{-}}T^{-1}(y^{-}))^{-1}=1$, therefore we obtain that $m_{x^{-}}(V)=\gamma_{0}$ if and only if $m_{y^{-}}(A^{(-1)}(y^{-})^{-1}V)=\gamma_{0}$ for all $y^{-}\in Tx^{-}$. Thus, $G$ is $T^{-1}$-invariant. Since the subshift is topologically mixing, the backwards orbits of any element are dense in $\Sigma^{-}$. Hence, since $G$ is closed, we conclude that $G=\Sigma^{-}$.
        
\end{proof}

The following lemma gives us a relation between the weight of projective spaces with respect to a disintegration $(\hat{m}_{\hat{x}})_{\hat{x}\in\hat{\Sigma}}$ of $\hat{m}$ compared to a continuous disintegration $(m_{x^{-}})_{x^{-}\in\Sigma^{-}}$ of $m$.

\begin{lemma}
    Given $x^{-}\in\Sigma^{-}$ and $V\in \text{Grass}(l,d)$, we have that $\hat{m}_{\hat{x}}(V)\leq \gamma_{0}$ for $\hat{\mu}^{u}_{x^{-}}$-almost every $\hat{x}\in W^{u}_{loc}(x^{-})$. Hence, $m_{x^{-}}(V)=\gamma_{0}$ if and only if $\hat{m}_{\hat{x}}(V)= \gamma_{0}$ for $\hat{\mu}^{u}_{x^{-}}$-almost every $\hat{x}\in W^{u}_{loc}(x^{-})$.
\end{lemma}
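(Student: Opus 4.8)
The plan is to relate the conditional measures $\hat m_{\hat x}$ of the two-sided disintegration to the continuous conditional measures $m_{x^-}$ via Lemma \ref{desintegracionm}, and then extract an almost-everywhere bound from an integral bound. Recall from Lemma \ref{desintegracionm} that $m_{x^-}=\int \hat m_{\hat x}\,d\hat\mu_{x^-}(\hat x)$, where $\hat\mu_{x^-}$ is supported on $\mathcal W^u_{loc}(x^-)$. First I would fix $x^-\in\Sigma^-$ and $V\in\mathrm{Grass}(l,d)$ and apply this identity to the (continuous, hence bounded) indicator-type evaluation on $V$: by definition of $\gamma_0$ as a supremum over all $y^-$ and all $l$-planes, and since $m_{x^-}(V)\le\gamma_0$, we get
$$\int_{\mathcal W^u_{loc}(x^-)} \hat m_{\hat x}(V)\,d\hat\mu_{x^-}(\hat x)=m_{x^-}(V)\le\gamma_0.$$
So the average of $\hat m_{\hat x}(V)$ over $\hat\mu_{x^-}$ is at most $\gamma_0$.

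The key point is to upgrade this to the pointwise statement $\hat m_{\hat x}(V)\le\gamma_0$ for $\hat\mu^u_{x^-}$-a.e.\ $\hat x$. This does not follow merely from the average being $\le\gamma_0$; one needs that $\hat m_{\hat x}(V)$ cannot exceed $\gamma_0$ on a positive-measure set and compensate elsewhere, i.e.\ one needs a uniform upper bound $\hat m_{\hat x}(V)\le\gamma_0$ available after possibly replacing $\hat x$ by nearby points. I would argue as follows: if $\hat m_{\hat x}(V)>\gamma_0$ on a set $B\subseteq\mathcal W^u_{loc}(x^-)$ of positive $\hat\mu_{x^-}$-measure, then using the continuity of the holonomies $h_{y^-\leftarrow x^-}$ and their Jacobians (Lemma \ref{jacobiano1}) together with the $s$-state property ($\hat m_{\hat y}=\hat m_{\hat x}$ when $\hat y\in\mathcal W^s_{loc}(\hat x)$, which lets one transport the conditional measures along stable holonomies), one produces, for $y^-$ in a neighborhood of $x^-$, a set of positive measure on which $\hat m$ assigns more than $\gamma_0$ to a fixed $l$-plane; integrating over such $y^-$ and using Lemma \ref{desintegracionm} again would force $m_{y^-}(V')>\gamma_0$ for some $y^-$ and some $l$-plane $V'$, contradicting the definition of $\gamma_0$. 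This gives $\hat m_{\hat x}(V)\le\gamma_0$ for $\hat\mu^u_{x^-}$-a.e.\ $\hat x$.

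For the second assertion, I would combine the just-proved upper bound with the integral identity. Since $\hat m_{\hat x}(V)\le\gamma_0$ a.e.\ and $\int\hat m_{\hat x}(V)\,d\hat\mu_{x^-}(\hat x)=m_{x^-}(V)$, we have $m_{x^-}(V)=\gamma_0$ if and only if $\hat m_{\hat x}(V)=\gamma_0$ for $\hat\mu_{x^-}$-a.e.\ $\hat x$: indeed the average of a function bounded above by $\gamma_0$ equals $\gamma_0$ precisely when the function equals $\gamma_0$ almost everywhere. One must also check that $\hat\mu_{x^-}$ and $\hat\mu^u_{x^-}$ are mutually absolutely continuous (they differ by the positive continuous density $\psi$, by Lemma \ref{jacobiano1}), so "$\hat\mu_{x^-}$-a.e." and "$\hat\mu^u_{x^-}$-a.e." coincide.

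The main obstacle is the passage from the integral bound to the pointwise bound $\hat m_{\hat x}(V)\le\gamma_0$: it genuinely requires the continuous local product structure (to move between $x^-$ and nearby $y^-$ with controlled Jacobians) and the $s$-invariance of $\hat m$ (to identify conditional measures along stable sets), rather than being a soft measure-theoretic fact. Everything else is bookkeeping with the disintegration formula of Lemma \ref{desintegracionm} and the absolute continuity statements of Lemma \ref{jacobiano1}.
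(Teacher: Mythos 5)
The integral identity from Lemma \ref{desintegracionm} and the observation that the second assertion then reduces to an elementary fact about averages of functions bounded above by $\gamma_0$ are correct and match the paper. The problem is your proposed mechanism for the first assertion, which is where all the work lies.

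You correctly diagnose that the hard step is upgrading the integral bound $\int \hat m_{\hat x}(V)\,d\hat\mu_{x^-}\le\gamma_0$ to a pointwise bound, and that this cannot be a soft measure-theoretic fact. But the remedy you propose does not actually produce a contradiction. Suppose $\hat m_{\hat x}(V)>\gamma_0$ on a set $B\subseteq\mathcal W^u_{loc}(x^-)$ of positive $\hat\mu_{x^-}$-measure. Transporting $B$ by stable holonomy to a nearby fiber $\mathcal W^u_{loc}(y^-)$, with the $s$-state property and the Jacobian estimates of Lemma \ref{jacobiano1}, gives you a positive-measure set $B'\subseteq\mathcal W^u_{loc}(y^-)$ on which $\hat m_{\hat y}(V)>\gamma_0$, but this is exactly the configuration you started with: $B'$ need be no larger a fraction of $\mathcal W^u_{loc}(y^-)$ than $B$ was of $\mathcal W^u_{loc}(x^-)$, so integrating over $\hat\mu_{y^-}$ still only yields $m_{y^-}(V)\le\gamma_0$. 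There is no contradiction with the definition of $\gamma_0$, because $\gamma_0$ is a supremum over \emph{all} $y^-$ and $V$, and your manipulations never push any conditional average above it. The notion of \say{integrating over such $y^-$} is also not how Lemma \ref{desintegracionm} is applied: that lemma integrates the fiber measures $\hat m_{\hat y}$ over $\hat\mu_{y^-}$ within a single local unstable set, not over a range of base points $y^-$.

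The ingredient you are missing is a concentration mechanism combined with the dynamics. The paper's proof considers the refining partition $\{T^{-n}(\mathcal W^u_{loc}(y^-)):y^-\in T^n(x^-)\}$ of $\mathcal W^u_{loc}(x^-)$, whose elements shrink to points, and uses a Lebesgue-density-point argument to find, for any $\epsilon>0$, an $n$ and a $y^-\in T^n(x^-)$ with
$$\hat\mu_{x^-}\bigl(\Gamma\cap T^{-n}(\mathcal W^u_{loc}(y^-))\bigr)\geq(1-\epsilon)\,\hat\mu_{x^-}\bigl(T^{-n}(\mathcal W^u_{loc}(y^-))\bigr).$$
Pushing forward by $T^n$ (using $J_{\mu^-}T^{-n}$ from Lemma \ref{jacobiano2}) and the $F_A^{-1}$-invariance of $\hat m$ converts $V$ into the new plane $V'=A^{(-n)}(y^-)^{-1}V$ and makes $T^n(\Gamma)\cap\mathcal W^u_{loc}(y^-)$ occupy at least a $(1-\epsilon)$-fraction of $\mathcal W^u_{loc}(y^-)$; choosing $\epsilon$ with $(1-\epsilon)\gamma_1\geq\gamma_0$ then forces $m_{y^-}(V')\geq(1-\epsilon)\gamma_1>\gamma_0$, which contradicts the definition of $\gamma_0$. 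This coupling of Lebesgue density with the action of $T^n$ and $A^{(-n)}$ is essential and is absent from your argument; stable holonomy to a nearby $y^-$ alone cannot concentrate $\Gamma$ and so cannot push the average past $\gamma_0$.
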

\begin{proof}
    Suppose by contradiction there is $V\in \text{Grass}(l,d)$, $x^{-}\in\Sigma^{-}$, $\gamma_{1}>\gamma_{0}$ and $\Gamma\subseteq \mathcal{W}^{u}_{loc}(x^{-})$ such that $\hat{\mu}^{u}_{x^{-}}(\Gamma)>0$ and $\hat{m}_{\hat{x}}(V)\geq \gamma_{1}$ for every $\hat{x}\in \Gamma$. Consider the partition of $\mathcal{W}^{u}_{loc}(x^{-})$ given by the sets $\{T^{-n}(\mathcal{W}^{u}_{loc}(y^{-})): (y^{-},y)\in T^{n}(\mathcal{W}^{u}_{loc}(x^{-}))\}$. Note that the diameter of each element of the partition vanishes as $n$ grows to infinity. \\
    Hence, by the regularity of the measure $\hat{\mu}_{x^{-}}$, given $\epsilon>0$, there exists $n\in\mathbb{N}$ and $\hat{y}=(y^{-},y)\in T^{n}(\mathcal{W}^{u}_{loc}(x^{-}))$ such that 
    $$\hat{\mu}^{u}_{x^{-}}(\Gamma \cap T^{-n}(\mathcal{W}^{u}_{loc}(y^{-})))\geq (1-\epsilon)\hat{\mu}^{u}_{x^{-}}(T^{-n}(\mathcal{W}^{u}_{loc}(y^{-}))).$$
    In particular, take $\epsilon$ small enough so that $(1-\epsilon)\gamma_{1}\geq \gamma_{0}.$ Since $\hat{m}$ is $F_{A}$-invariant (and therefore also $F_{A}^{-1}$-invariant)
    $$\hat{m}_{\hat{y}}(A^{(-n)}(y{^{-}})^{-1}V)=\hat{m}_{T^{-n}\hat{y}}(V)\geq \gamma_{1},$$
    for $\hat{\mu}$ almost every $\hat{y}\in T^{n}(\Gamma)\cap \mathcal{W}^{u}_{loc}(y^-)$. Thus, using the Jacobian as defined in Lemma \ref{jacobiano2}
    \begin{align*}
        \hat{\mu}^{u}_{y^{-}}(T^{{n}}(\Gamma)\cap \mathcal{W}^{u}_{loc}(y^{-}))&=J_{\mu^{-}}T^{-n}\hat{\mu}^{u}_{x^{-}}(\Gamma\cap T^{-n}(\mathcal{W}^{u}_{loc}(y^{-}))) \\
        &\geq (1-\epsilon)J_{\mu^{-}}T^{-n}\hat{\mu}^{u}_{x^{-}}( T^{-n}(\mathcal{W}^{u}_{loc}(y^{-})))\\
        &=1-\epsilon.
    \end{align*}
    Hence 
    \begin{align*}
        m_{y^{-}}(A^{(-n)}(y^{-})^{-1}V)&=\int_{\mathcal{W}^{u}_{loc}(y^{-})} \hat{m}_{\hat{y}}(A^{(-n)}(y{^{-}})^{-1}V)\hat{\mu}^{u}_{y^{-}}(\hat{y})\\
        &\geq \int_{T^{{n}}(\Gamma)\cap\mathcal{W}^{u}_{loc}(y^{-})} \hat{m}_{\hat{y}}(A^{(-n)}(y{^{-}})^{-1}V)\hat{\mu}^{u}_{y^{-}}(\hat{y})\\
        &\geq \gamma_{1} \int_{T^{{n}}(\Gamma)\cap\mathcal{W}^{u}_{loc}(y^{-})} \hat{\mu}^{u}_{y^{-}}(\hat{y})\\
        &\geq\gamma_{1}(1-\epsilon)\\
        &>\gamma_{0},
    \end{align*}
    contradicting that $\gamma_{0}$ is the supremum of the measures of the $l$-dimensional projective subspaces. This proves the first statement of the assertion. The second affirmation follows from Lemma \ref{desintegracionm}. 
\end{proof}

\begin{proof}[Proof of Theorem \ref{pesoCero}]
    Consider a periodic point of period $m+1$ in the nonnegative coordinates $p^{-}=\prescript{\infty}{}{}(p_m p_{m-1}\dots p_0)\in \Sigma^{-}$ .\\
    Suppose $\{V_{1},\dots, V_{k}\}$ are $l$-dimensional subspaces such that $m_{p^{-}}$ attains the supremum in each of them. Note that $m_{p^{-}}(V_{i}\cap V_{j})=0$ if $i\not=j$ due to the minimality of $l$. Hence
   $$ m_{p^{-}}(V_{1}\cup\dots\cup V_{k})=\sum m_{p^{-}}(V_{i}).$$

Therefore, the set of subspaces where $\gamma_{0}$ is attained is finite. Let us assume that $\{V_{1},\dots, V_{k}\}$ is that set. \\
Construct an element $z^{-}\in [p_{1}p_{0}]\subseteq\Sigma^{-}$ such that there exist $t\in\mathbb{N}$ where $T^{-t}(z)=p^{-}$. Consider the following sets: 
\begin{align*}
    B(p^{-})&=\{ \hat{w}\in \mathcal{W}^{u}_{loc}(p^{-}) \text{ : }\hat{m}_{\hat{w}}(V_{i})\not=\gamma_{0} \text{ for some }1\leq i\leq k \},\\
    B(z^{-})&=\{ \hat{w}\in \mathcal{W}^{u}_{loc}(z^{-}) \text{ : }\hat{m}_{\hat{w}}(V_{i})\not=\gamma_{0} \text{ for some }1\leq i\leq k \}.
\end{align*}
Let us show that if $\hat{w}\in B(p^{-})$, then $\hat{y}=[z^{-},\hat{w}]\in B(z^{-})$. Since $p^{-}, z^{-}$ share the same zeroth coordinate, $\hat{y}$ is well defined. Let $V_i$ be the $l$ dimensional space such that $\hat{m}_{\hat{w}}(V_i)\not=\gamma_{0}$. Notice that $\hat{y}\in W^{s}_{loc}(\hat{w})$, thus, since $\hat{m}$ is an s-state
\begin{align*}
  \hat{m}_{\hat{w}}(V_i)=\hat{m}_{\hat{y}}(V_i)\not= \gamma_{0}.
\end{align*}
Hence, $\hat{y}\in B(z^{-})$. Similarly, if $\hat{y}\in B(z^{-})$, then $\hat{w}=[z^{-},\hat{y}]\in B(p^{-})$. \\
Thus, since $\hat{\mu}_{p^{-}}^{u}$ is equivalent to $\hat{\mu}_{z^{-}}^{u}$, we conclude that $\hat{\mu}_{z^{-}}^{u}(B(z^{-}))=0$. Hence, $m_{z^{-}}(V_i)=\gamma_{0}$ for all $1\leq i\leq k$.\\

Denote $U=\bigcup_{j}V_{j}$. By Lemma \ref{lemmazeroweight}, we know that:
$$U\subseteq (A^{(-t)}(z^{-}))^{-1}U.$$
Hence, by pigeonhole principle, we conclude that $\{V_{1},\dots, V_{k}\}$ are the only projective subspaces of dimension $l$ where the measure is $\gamma_{0}$ for $m_{z^{-}}$.\\
Now, we define the following map on the set of symbols $X$. For $\xi\in X$, we consider any element $x^-$ such that 
$$x^{-}=\prescript{\infty}{}{}(p_m p_{m-1}\dots p_0)\dots\xi\underbrace{\beta_{s-1}\dots \beta_1p_0}_{\text{s-symbols}},$$
and we define
$$L_{x^{-}}(\xi)=A^{(-s)}(x^-)U,$$
were $s\in\mathbb{N}$ is the minimum instance such that $(T^{-s}x^{-})_{0}=\xi$. Let us show that the previous description is well defined and does not depend on $x^{-}$. Consider two elements
\begin{align*}
    x_{1}^{-}&=\prescript{\infty}{}{}(p_m p_{m-1}\dots p_0)r_{s_2}\dots r_1\xi \beta_{s_1-1}\dots \beta_{1}p_{0},\\
    x_{2}^{-}&=\prescript{\infty}{}{}(p_m p_{m-1}\dots p_0)v_{s_4}\dots v_1\xi \delta_{s_3-1}\dots \delta_{1}p_{0},
    \end{align*}
where $\beta_{1},\dots,\beta_{s_1-1},\delta_{1},\dots,\delta_{s_3-1}$ are all symbols different to $\xi\in X$. We have to prove that $L_{x_1^{-}}(\xi)=L_{x_2^{-}}(\xi)$. Using the fact that our subshift is of memory one, we can construct:
$$x_{3}^{-}=\prescript{\infty}{}{}(p_m p_{m-1}\dots p_0)r_{s_2}\dots r_1\xi \beta_{s_1-1}\dots \beta_{1}p_{0}.$$
It is clear that $L_{x_{1}^{-}}(\xi)=L_{x_{3}^{-}}(\xi)$. Moreover, since $x_{3}^{-}$ is a homoclinic point, by the previous discussion
\begin{align*}
    U&= (A^{(-s_{1}-s_{4}-1)}(x^{-}_{3}))^{-1}U\\
    &=(A^{(-s_4-1)}(T^{-s_{1}}x^{-}_{3})A^{(-s_{1})}(x^{-}_{3}))^{-1}U.\\
    \intertext{Hence}
     A^{(-s_{1})}(x^{-}_{3})U&= (A^{(-s_4-1)}(T^{-s_{1}}x^{-}_{3}))^{-1}U\\
     L_{x_{1}^{-}}(\xi)&= (A^{(-s_4-1)}(T^{-s_{1}}x^{-}_{3}))^{-1}U.
\end{align*}
Similarly, since $x_{2}^{-}$ is also a homoclinic point
\begin{align*}
     L_{x_{2}^{-}}(\xi)&= (A^{(-s_4-1)}(T^{-s_{3}}x^{-}_{2}))^{-1}U.
\end{align*}
Since $T^{-s_3}(x^{-}_{2})=T^{-s_1}(x^{-}_{3})$, we conclude that $L_{x^{-}_{1}}(\xi)=L_{x_{2}^{-}}(\xi)$. Hence, $L$ is well defined and it only depends on symbols.\\
We conclude that $L(\xi)$ is equivariant. Indeed, consider an allowed 2-word $\xi_1\xi_2$. Using the topologically mixing property of the system, we can construct an element
$$x^{-}=\prescript{\infty}{}{}(p_m p_{m-1}\dots p_0)\dots \xi_{2}\xi_{1}\underbrace{\beta_{s-1}\dots \beta_1p_0}_{\text{s-symbols}}$$
Hence
$$L(\xi_{2})=A^{(-s-1)}(x^{-})\{V_{1},\dots, V_{k}\}=A^{(-1)}(T^{-s}x^{-})L(\xi_{1}).$$ 
We conclude $A$ is not strongly irreducible, which is a contradiction. 
\end{proof}

\begin{remark}\label{pinchingtwisting}
Assume that $A$ is a pinching and twisting  2-step cocycle over a subshift of finite type of memory 1 with pinching periodic point $\hat{p}\in\hat{\Sigma}$ of period $l\in\mathbb{N}$ and homoclinic point $\hat{z}\in \mathcal{W}_{loc}^{u}(\hat{p})$. By conjugating if necessary, we may assume that $A^{(l)}(\hat{p})$ is a diagonal matrix. Similarly, we note that the stable and unstable holonomies are trivial since the cocycle is 2-step. 
    
Suppose by contradiction that there is a finite family of invariant locally constant maps $V_1(\hat{x}),\dots, V_{m}(\hat{x})\in \text{Grass}(l,\mathbb{R}^{d})$ which only depend on the zeroth coordinate. Since $A^{(l)}(\hat{p})$ is a diagonal matrix, then $V_{i}(\hat{p})=V_{i}(p_0)$ are proper subspaces orthogonal to some coordinate axis for $1\leq i\leq m$. On the other hand, since there is $k\in\mathbb{N}$ such that $T^{k}\hat{z}\in\mathcal{W}_{loc}^{s}(\hat{p})$ and the twisting operator is $\psi_{\hat{p},\hat{z}}=A^{k}(\hat{z})$  due to the holonomies being trivial, we obtain that
    \begin{align*}
      A^{k}(\hat{z})(V_{1}(\hat{z})\cup\dots\cup V_{m}(\hat{z}))&=A^{k}(\hat{z})(V_{1}(p_0)\cup\dots\cup V_{m}(p_0))\\
      &=V_{1}(T^{k}\hat{z})\cup\dots\cup V_{m}(T^{k}\hat{z})\\
      &=V_{1}(p_0)\cup\dots\cup V_{m}(p_0).
    \end{align*}
Since an eigenbasis basis of $A^{(l)}(\hat{p})$ is given by the standard basis of $\mathbb{R}^{d}$, the twisting assumption yields a contradiction. 
\end{remark}

\section{Large Deviation Estimates}  \label{Section4}
In this section we will proceed to find some Large Deviation Estimates (LDE) for cocycles. The following result is based on a similar LDE presented in \cite[Theorem~3.6]{avila2024schrodingeroperatorspotentialsgenerated}.

Recall that the space of Hölder continuous functions of exponent $\alpha \in (0,1)$, denoted $\mathcal{H}_{\alpha}(\Sigma)$ is a Banach space of norm:
$$\norm{g}_{\alpha}=\norm{g}_{\infty}+\sup\left\{\dfrac{|g(x)-g(y)|}{\iota(x,y)^{\alpha}}: x,y\in \Sigma, x\not=y\right\}.$$
Additionally, we will denote the $(i+1)$-cylinder around $\omega\in \Sigma$ as
    $$D_{i}(\omega)=[w_0,\dots,w_{i}]\subseteq \Sigma.$$
We will need the following result about exponential mixing.

\begin{lemma}\label{exponentialmixing}
Suppose $\mu$ is the equilibrium state of a H\"older potential on $\Sigma$. Fix $\omega\in \Sigma$ and $\alpha\in (0,1)$. Then there are constants $C>0$, $0<\gamma<1$ such that for all $g\in \mathcal{H}_{\alpha}(\Sigma)$, $f(x)=\dfrac{\chi_{D_{i}(\omega)}}{\mu(D_{i}(\omega))}(x)$ and $i\in \mathbb{N}$, we have
$$\left|\dfrac{1}{\mu(D_{i}(\omega))}\int_{D_{i}(\omega)} g\circ T^{i}d\mu-\int g d\mu\right|\leq C\gamma^{i}\left\Vert g-\int gd\mu\right\Vert_{\alpha}.$$
\end{lemma}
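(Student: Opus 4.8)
The plan is to rewrite the left‑hand side using the Ruelle transfer operator and then combine its spectral gap with one crucial regularization observation. Note first that $\frac{1}{\mu(D_{i}(\omega))}\int_{D_{i}(\omega)}g\circ T^{i}\,d\mu=\int f\cdot(g\circ T^{i})\,d\mu$. Let $\varphi\in\mathcal H_{\beta}(\Sigma)$ (for some $\beta>0$) be the normalized potential whose equilibrium state is $\mu$, so that the transfer operator $\mathcal Lh(x)=\sum_{Ty=x}e^{\varphi(y)}h(y)$ satisfies $\mathcal L\mathbf 1=\mathbf 1$ and $\mathcal L^{*}\mu=\mu$, and write $S_{n}\varphi=\sum_{k=0}^{n-1}\varphi\circ T^{k}$. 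Iterating the duality $\int(h\circ T)\,k\,d\mu=\int h\,(\mathcal Lk)\,d\mu$ a total of $i$ times, and using $\int\mathcal L^{i}f\,d\mu=\int f\,d\mu=1$, the quantity to estimate becomes
\[
\Big|\int g\cdot(\mathcal L^{i}f)\,d\mu-\int g\,d\mu\Big|=\Big|\int\Big(g-\textstyle\int g\,d\mu\Big)\big(\mathcal L^{i}f-\mathbf 1\big)\,d\mu\Big|\le\Big\|g-\textstyle\int g\,d\mu\Big\|_{\infty}\,\norm{\mathcal L^{i}f-\mathbf 1}_{L^{1}(\mu)},
\]
where I also used $\int(\mathcal L^{i}f-\mathbf 1)\,d\mu=0$. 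Since $\norm{\cdot}_{\infty}\le\norm{\cdot}_{\alpha}$, it remains to prove $\norm{\mathcal L^{i}f-\mathbf 1}_{L^{1}(\mu)}\le C\gamma^{i}$.

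A short computation identifies $\mathcal L^{i}f$: it is supported on the $0$-cylinder $[\omega_{i}]$, and for $x\in[\omega_{i}]$ it equals $e^{S_{i}\varphi(\omega_{0}\cdots\omega_{i-1}x)}/\mu(D_{i}(\omega))$, where $\omega_{0}\cdots\omega_{i-1}x$ is the (admissible) concatenation. By the Gibbs property of $\mu$, $\mu(D_{i}(\omega))\asymp e^{S_{i+1}\varphi(z)}$ for any $z\in D_{i}(\omega)$, with constants independent of $i$ and $\omega$; taking $z=\omega_{0}\cdots\omega_{i-1}x$ and using $S_{i+1}\varphi(z)=S_{i}\varphi(z)+\varphi(x)$ gives $\mathcal L^{i}f\asymp e^{-\varphi}$ on $[\omega_{i}]$, hence a uniform bound $\norm{\mathcal L^{i}f}_{\infty}\le K_{0}$. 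For the Hölder seminorm, bounded distortion of Birkhoff sums yields $|S_{i}\varphi(\omega_{0}\cdots\omega_{i-1}x)-S_{i}\varphi(\omega_{0}\cdots\omega_{i-1}x')|\le C_{\varphi}\,\iota(x,x')^{\beta}$ for $x,x'\in[\omega_{i}]$, because the two pre‑orbits stay exponentially close along the first $i$ iterates; combining this with $|e^{a}-e^{b}|\le\max(e^{a},e^{b})\,|a-b|$ and the $L^{\infty}$ bound produces a constant $K=K(\varphi,\beta)$, independent of $i\ge1$ (and of $\omega$), with $\norm{\mathcal L^{i}f}_{\beta}\le K$. This is the heart of the matter: it is precisely here that the exponential blow‑up of $\norm{f}_{\beta}$ is cancelled, the normalizing constant $\mu(D_{i}(\omega))$ offsetting the growth of $e^{S_{i}\varphi}$.

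With this in hand the Ruelle--Perron--Frobenius theorem finishes the job: $\mathcal L$ has a spectral gap on $\mathcal H_{\beta}(\Sigma)$, i.e.\ $\mathcal L^{n}h=\big(\int h\,d\mu\big)\mathbf 1+\mathcal N^{n}h$ with $\int\mathcal N^{n}h\,d\mu=0$ and $\norm{\mathcal N^{n}}_{\mathcal H_{\beta}\to\mathcal H_{\beta}}\le C_{1}\gamma_{1}^{n}$ for some $0<\gamma_{1}<1$. Writing $\mathcal L^{i}f=\mathcal L^{i-1}(\mathcal Lf)$ and using $\int\mathcal Lf\,d\mu=1$ gives $\mathcal L^{i}f-\mathbf 1=\mathcal N^{i-1}(\mathcal Lf)$, so $\norm{\mathcal L^{i}f-\mathbf 1}_{L^{1}(\mu)}\le\norm{\mathcal L^{i}f-\mathbf 1}_{\beta}\le C_{1}K\gamma_{1}^{\,i-1}$ for $i\ge1$. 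Substituting into the first display proves the lemma with $\gamma=\gamma_{1}$ and $C=\max\{1,\,C_{1}K/\gamma_{1}\}$; the degenerate case $i=0$ is immediate, the left‑hand side then being $\big|\int\big(g-\int g\,d\mu\big)\tfrac{\chi_{D_{0}(\omega)}}{\mu(D_{0}(\omega))}\,d\mu\big|\le\norm{g-\int g\,d\mu}_{\infty}$.

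The main obstacle is the uniform‑in‑$i$ regularity estimate $\norm{\mathcal L^{i}f}_{\beta}\le K$: a naive decay‑of‑correlations argument is useless here because both $\norm{f}_{\beta}$ and $\norm{g\circ T^{i}}_{\beta}$ grow exponentially in $i$. Everything else is a routine application of the spectral gap together with the mean‑zero trick that converts $\norm{g}_{\infty}$ into $\norm{g-\int g\,d\mu}_{\alpha}$, so the effort would go into pinning down the Gibbs and bounded‑distortion constants uniformly over the cylinders $D_{i}(\omega)$.
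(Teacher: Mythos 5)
Your route via the Ruelle transfer operator is genuinely different from the paper's, which lifts everything to the two‑sided shift $\hat\Sigma$ and quotes exponential decay of correlations with the cylinder indicator measured only in $L^1$. Your central observation --- that the \emph{full} power $\mathcal L^{i}f$ has a uniformly bounded $\beta$‑Hölder norm $K$, the Gibbs and bounded‑distortion constants over $D_i(\omega)$ absorbing the exponential blow‑up of $\norm{f}_\beta$ --- is correct and nicely isolates the regularizing mechanism. But the final step does not go through, for two reasons. First, in $\mathcal L^{i}f-\mathbf 1=\mathcal N^{i-1}(\mathcal Lf)$ you bound $\norm{\mathcal N^{i-1}(\mathcal Lf)}_\beta\le C_1\gamma_1^{i-1}\norm{\mathcal Lf}_\beta$ and then substitute $K$; but $K$ bounds $\norm{\mathcal L^{i}f}_\beta$ (the power of $\mathcal L$ must match the cylinder length), whereas $\mathcal Lf$ is supported on $D_{i-1}(T\omega)$ with $\norm{\mathcal Lf}_\infty\asymp 1/\mu(D_i(\omega))$, which grows exponentially in $i$. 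Second --- and this is fatal --- your own computation shows $\mathcal L^{i}f$ is supported on the single $0$‑cylinder $[\omega_i]$, where it is $\asymp e^{-\varphi}$. Hence $\norm{\mathcal L^{i}f-\mathbf 1}_{L^1(\mu)}\ge\mu(\Sigma\setminus[\omega_i])\ge 1-\max_{\xi}\mu([\xi])>0$, uniformly in $i$, so the desired inequality $\norm{\mathcal L^{i}f-\mathbf 1}_{L^1}\le C\gamma^i$ can never hold.

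This in fact exposes a defect in the statement of the lemma itself. Since $(T^{i}x)_0=\omega_i$ for every $x\in D_i(\omega)$, the pushforward of $\mu(\cdot\mid D_i(\omega))$ under $T^{i}$ charges only $[\omega_i]$, so the conditional expectation of $g\circ T^{i}$ sees $g$ only on that cylinder. Concretely, on the full $2$‑shift with uniform Bernoulli measure, take $\omega=000\cdots$ and $g$ equal to $1$ on $[0]$ and $-1$ on $[1]$: then $\frac{1}{\mu(D_i(\omega))}\int_{D_i(\omega)}g\circ T^{i}\,d\mu=1$ for every $i$, while $\int g\,d\mu=0$ and $\norm{g-\int g\,d\mu}_\alpha$ is finite. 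What your uniform bound $\norm{\mathcal L^{i}f}_\beta\le K$ (and, in a different guise, the paper's two‑sided mixing argument) does yield is a version with an \emph{extra} separation: replacing $g\circ T^{i}$ by $g\circ T^{i+m}$ and writing $\mathcal L^{i+m}f=\mathcal L^{m}(\mathcal L^{i}f)$ with $\int\mathcal L^{i}f\,d\mu=1$ gives a bound of order $\gamma^{m}\norm{\mathcal L^{i}f}_\beta\le K\gamma^{m}$ --- but the rate is in the additional gap $m$, not in the cylinder length $i$. Before pursuing a proof, revisit the statement of Lemma~\ref{exponentialmixing} (and then the way it enters Lemma~\ref{PrimerLDE}, where the off‑by‑one overlap appears again).
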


\begin{proof}
    Recall that $\mu=\pi_{*}\hat{\mu}$, where $\pi$ is the projection of $\hat{\Sigma}$ onto $\Sigma$. Hence
    \begin{align*}
        \left|\int f(g\circ T^{i})d\mu-\int g d\mu\right|&=\left|\int (f\circ \pi)(g\circ T^{i}\circ \pi)d\hat{\mu}-\int g\circ \pi d\hat{\mu}\right|,
        \intertext{since the projection and the shift \say{commute}, taking into account the change from one-sided shift to two-sided shift, we obtain}
        \left|\int f(g\circ T^{i})d\mu-\int g d\mu\right|&=\left|\int (f\circ \pi)(g\circ \pi\circ T^{i})d\hat{\mu}-\int g\circ \pi d\hat{\mu}\right|\\
        \intertext{using the $T^{-1}$ invariance of the two-sided shift and $\norm{f}_{L^1}=1$}
        \left|\int f(g\circ T^{i})d\mu-\int g d\mu\right|&=\left|\int (f\circ \pi\circ T^{-i})(g\circ \pi)d\hat{\mu}-\int g\circ \pi d\hat{\mu}\right|\\
        &=\left|\int (f\circ \pi\circ T^{-i})(g\circ \pi)d\hat{\mu}-\int f\circ \pi d\hat{\mu} \int g\circ \pi d\hat{\mu}\right|.
    \end{align*}    
    Hence, using the exponential decay of correlations of equilibrium states as stated in \cite[Theorem~5.4.9]{urbanski}
    \begin{align*}
    \left|\int f(g\circ T^{i})d\mu-\int g d\mu\right|&\leq C\gamma^{i} \left\Vert f-\int fd\mu\right\Vert_{L^1}\left \Vert g-\int gd\mu\right\Vert_{\alpha}\\
    &\leq 2C\gamma^{i}\left \Vert g-\int gd\mu \right\Vert_{\alpha}    
    \end{align*}
\end{proof}

The following result shows that outside an exponentially small set, the exponential growth of the norm of the cocycle cannot be larger than a slight increment of the first Lyapunov exponent. Note that the unique hypothesis we are imposing on $A$ is that it is a locally constant cocycle. Indeed, in Section \ref{Section5} we will use this result on the cocycle $\mathsf{\Lambda}^2 A$, which is not necessarily strongly irreducible and may not satisfy $\lambda_{1}(\mathsf{\Lambda}^2 A,\hat{\mu})>\lambda_2(\mathsf{\Lambda}^2 A,\hat{\mu})$.

\begin{lemma} \label{PrimerLDE}
    Let $\mu$ be the equilibrium state of a Hölder potential and consider a two-step cocycle $A$. Then, for every $\epsilon>0$ there exist $M>0, \beta>0$ such that for all $n\in\mathbb{N}$
    $$\mu\left\{x\in\Sigma: \frac{1}{n}\log\norm{A^{(n)}(x)}-\lambda_{1}(A,\hat{\mu})>\epsilon \right\}<Me^{-\beta n}.$$
\end{lemma}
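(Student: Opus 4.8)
The plan is to establish this one-sided large deviation estimate for $\frac1n\log\|A^{(n)}(x)\|$ by a standard subadditive/Markov-inequality argument, feeding it the exponential mixing of $\mu$ from Lemma \ref{exponentialmixing}. First I would fix $\epsilon>0$ and recall that, by the Furstenberg–Kesten theorem, $\frac1n\int\log\|A^{(n)}\|\,d\mu \to \lambda_1(A,\hat\mu)$, so there is $N_0$ with $\frac1{N_0}\int\log\|A^{(N_0)}\|\,d\mu < \lambda_1 + \epsilon/4$. The quantity $\log\|A^{(n)}(x)\|$ is bounded (the cocycle is locally constant, so $\|A\|,\|A^{-1}\|$ are bounded above, say by $e^{L}$), hence $\varphi_n(x):=\log\|A^{(N_0 n)}(x)\|$ is a bounded, Hölder-continuous function of $x\in\Sigma$ (it depends only on finitely many coordinates, actually on $x_0,\dots,x_{N_0 n}$, so it is locally constant). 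I would then work along the subsequence $n=N_0 k$ and deduce the general $n$ at the end by absorbing the bounded error $|\log\|A^{(n)}\|-\log\|A^{(N_0\lfloor n/N_0\rfloor)}\||\le N_0 L$ into $\epsilon$ for $n$ large and adjusting $M$ to cover small $n$.

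The core estimate is an exponential Chebyshev bound: for $t>0$,
\[
\mu\{x:\tfrac1{N_0k}\log\|A^{(N_0 k)}(x)\|>\lambda_1+\epsilon\}
\le e^{-t k N_0(\lambda_1+\epsilon)}\int \|A^{(N_0 k)}(x)\|^{t}\,d\mu(x).
\]
Using submultiplicativity, $\|A^{(N_0 k)}(x)\|\le \prod_{j=0}^{k-1}\|A^{(N_0)}(T^{N_0 j}x)\|$, I want to bound $\int\prod_{j=0}^{k-1}\|A^{(N_0)}(T^{N_0 j}x)\|^t\,d\mu$ by something like $C^k\big(\int\|A^{(N_0)}\|^t\,d\mu\big)^k$ up to a controlled multiplicative error, which is exactly where Lemma \ref{exponentialmixing} enters: the functions $\|A^{(N_0)}\circ T^{N_0 j}\|^t$ are locally constant (hence Hölder), uniformly bounded, and their correlations decay geometrically, so an inductive application of the mixing estimate gives $\int\prod_{j=0}^{k-1} f\circ T^{N_0 j}\,d\mu \le \big(\int f\,d\mu + o(1)\big)^k$ with the $o(1)$ uniform; here I need the cylinder-test-function form of the mixing lemma (or its more standard two-function version) applied iteratively. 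Then, choosing $t$ small, a Taylor expansion gives $\int\|A^{(N_0)}\|^t\,d\mu = 1 + t\int\log\|A^{(N_0)}\|\,d\mu + O(t^2) \le 1 + t N_0(\lambda_1+\epsilon/2)$ for $t$ small (using $\log\|A^{(N_0)}\|\le N_0 L$ to control the quadratic term), so $\int\|A^{(N_0)}\|^t\,d\mu \le e^{t N_0(\lambda_1+\epsilon/2)}$. Combining, the right-hand side of the Chebyshev bound is $\le C^k e^{-t k N_0\epsilon/2}$, which is $Me^{-\beta N_0 k}$ for suitable $M,\beta>0$ once $t$ is fixed small enough that $C^{1/N_0} < e^{t\epsilon/4}$... — more carefully, I'd first fix $N_0$ large (so the averaged growth is within $\epsilon/4$ of $\lambda_1$), then fix $t$ small, then note the mixing error constant $C^k$ grows only geometrically in $k$ while the gain $e^{-tkN_0\epsilon/2}$ dominates, giving net exponential decay.

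The main obstacle is controlling the correlation-error accumulation in $\int\prod_{j=0}^{k-1} f\circ T^{N_0 j}\,d\mu$: a naive term-by-term application of Lemma \ref{exponentialmixing} produces a product $\prod(1 + C\gamma^{N_0 j})$ of error factors, which must be shown to stay bounded (it does, since $\sum \gamma^{N_0 j}<\infty$) rather than blowing up, and one must verify the mixing lemma as stated — phrased with the special test function $f=\chi_{D_i}/\mu(D_i)$ — actually delivers the needed bound for products of the bounded Hölder functions $\|A^{(N_0)}\circ T^{N_0 j}\|^t$; if the stated form is too restrictive I would instead invoke the standard exponential decay of correlations for equilibrium states (cited in the proof of Lemma \ref{exponentialmixing} from \cite{urbanski}) directly. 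Everything else — the Furstenberg–Kesten averaging, the Taylor expansion in $t$, the passage from the subsequence $N_0 k$ to all $n$, and the choice of constants $M,\beta$ — is routine.
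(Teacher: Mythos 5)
Your argument is correct but takes a genuinely different route from the paper's. The paper's proof is a martingale argument: after fixing $N$ exactly as you fix $N_0$, it sets $Y_i=\log\|A^{(i)}\|$, uses Lemma~\ref{exponentialmixing} to show $\mathbb{E}(Y_{lN}-Y_{(l-1)N}\mid\mathcal{B}_{(l-1)N})\le N\epsilon/2$ for $l$ large, and then applies Azuma's inequality to the bounded martingale-difference sequence $X_n=Y_{nN}-\sum_l\mathbb{E}(Y_{lN}-Y_{(l-1)N}\mid\mathcal{B}_{(l-1)N})$. You instead run a Chernoff/Cram\'er bound: Markov's inequality on $\|A^{(N_0 k)}\|^t$, submultiplicativity $\|A^{(N_0k)}\|\le\prod_j\|A^{(N_0)}\circ T^{N_0 j}\|$, and an iterated conditioning on cylinders of length $N_0(k-1)+1$, which is precisely the operation Lemma~\ref{exponentialmixing} is built for (on each cylinder $D$ the first $k-1$ factors are constant, so $\mu(D)^{-1}\int_D f^t\circ T^{N_0(k-1)}\,d\mu$ falls squarely under the lemma with $g=\|A^{(N_0)}\|^t$). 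Both proofs share the same two inputs (Furstenberg--Kesten to pick $N_0$; exponential mixing of the equilibrium state) and your worry points are handled correctly: the accumulated error factor is $\prod_l\bigl(1+C\gamma^{N_0 l}\|f^t\|_\alpha/\int f^t d\mu\bigr)$, bounded because $\sum_l\gamma^{N_0l}<\infty$ (not merely geometrically growing, as your self-correction notes), and the Taylor expansion of $\int\|A^{(N_0)}\|^t\,d\mu$ is licit because $\log\|A^{(N_0)}\|$ is uniformly bounded (local constancy), provided $N_0$ is fixed before $t$. One structural advantage of the paper's martingale route, worth flagging, is that it transfers verbatim to Lemma~\ref{SegundoLDE}, where $\|A^{(n)}\|$ is replaced by $\|A^{(n)}(x)v\|$ and a two-sided bound is needed: there submultiplicativity breaks down (you cannot factor $\|A^{(n)}(x)v\|$ into a product of blocks without tracking the evolving direction), whereas the conditional-expectation estimate $\mathbb{E}(Y_{N+i}-Y_i\mid\mathcal{B}_i)$ goes through unchanged. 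So for the present one-sided norm bound both proofs are equally good, but the paper's choice is the one that generalizes to what comes next.
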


It should be noted that Lemma \ref{PrimerLDE} can be derived from the more general result \cite[Proposition~3.1]{klein} combined with the LDEs for observables obtained in \cite[Theorem~5.1]{Duarte}. Nonetheless, we prefer to give a self-contained proof based on martingale arguments. 

\begin{proof}
    By rescaling the cocycle, we may assume that $\lambda_1(A,\hat{\mu})=0$. Given $\epsilon>0$, fix $N\in \mathbb{N}$ such that
    $$\left|\int\frac{1}{N}\log\norm{A^{(N)}}d\mu\right|<\frac{\epsilon}{4}.$$
    Using Lemma \ref{exponentialmixing} we can find $C>0$ and $0<\gamma<1$ such that for all $i\in\mathbb{N}$ and all $\omega\in\Sigma$
    \begin{align*}
    &\left|\dfrac{1}{\mu(D_{i}(\omega))}\int_{D_{i}(\omega)}\log\norm{A^{(N)}\circ T^{i}}d\mu-\int\log\norm{A^{(N)}}d\mu\right|\\
    &\leq C\gamma^{i}\left\Vert\log\norm{A^{(N)}}-\int \log\norm{A^{(N)}}d\mu\right\Vert_{\alpha},
    \end{align*}
    where $\alpha\in (0,1)$ is such that $A^{(N)}\in \mathcal{H}_{\alpha}(\Sigma)$. We know that such $\alpha$ exists since $A^{(N)}$ only depends on finitely many coordinates. 
    
    Thus, taking $k$ such that $C\gamma^{k}\norm{\log\norm{A^{(N)}(x)}-\int \log\norm{A^{(N)}(x)}d\mu}_{\alpha}<\frac{\epsilon}{4}$, we obtain for all $i\geq k$
    \begin{align*}
        &\left|\dfrac{1}{N \mu(D_{i}(\omega))}\int_{D_{i}(\omega)}\log\norm{A^{(N)}(T^{i}x)}d\mu\right|\\
        &\leq \dfrac{1}{N}\left|\int\dfrac{\chi_{D_{i}(\omega)}}{\mu(D_{i})}\log\norm{A^{(N)}(T^{i}x)}d\mu-\int\log\norm{A^{(N)}(x)}d\mu\right|+\left|\int\frac{1}{N}\log\norm{A^{(N)}(x)}d\mu\right|\\ \numberthis \label{desigualdad1}
        &<\dfrac{\epsilon}{2}. 
    \end{align*}
    Denote $\mathcal{B}_{i}$ to be the $\sigma$-algebra generated by the set of cylinders of length $i+1$.
    Consider the random variables $Y_{i}:\Sigma\to\mathbb{R}$ defined by $Y_{i}(x):=\log\norm{A^{(i)}(x)}$, which are $\mathcal{B}_{i}$-measurable. Note that $\mathbb{E}(Y_{i}|\mathcal{B}_{i})=Y_{i}$ and we can rewrite $Y_{i}$ as 
    $$Y_i(x)=\dfrac{1}{\mu(D_{i}(x))}\int_{D_{i}(x)}\log\norm{A^{(i)}(z)}d\mu(z).$$
    The conditional expectation of $Y_{N+i}$ with respect to $\mathcal{B}_{i}$ is
    \begin{align*}
        \mathbb{E}(Y_{N+i}|\mathcal{B}_{i})(\omega)=\dfrac{1}{\mu(D_{i}(\omega))}\int_{D_{i}(\omega)}\log\norm{A^{(N+i)}(z)}d\mu(z).
    \end{align*}
    Thus, using \eqref{desigualdad1}, we obtain
    \begin{align*}
        \dfrac{1}{N}\mathbb{E}(Y_{N+i}-Y_{i}|\mathcal{B}_i)&=\dfrac{1}{N \mu(D_{i}(\omega))}\int_{D_{i}(\omega)}\log\dfrac{\norm{A^{(N+i)}(z)}}{\norm{A^{(i)}(z)}}d\mu(z)\\
        &\leq\dfrac{1}{N \mu(D_{i}(\omega))}\int_{D_{i}(\omega)}\log \norm{A^{(N)}(T^{i}z)}d\mu(z)\\
        &\leq \dfrac{\epsilon}{2}. \numberthis \label{desigualdad2Version1}
    \end{align*}
    Consider $t\in \mathbb{N}$ such that $tN\geq k$. For $n>t$ define the sequence of random variables 
    $$X_{n}=Y_{nN}-\sum_{l=t+1}^{n}\mathbb{E}(Y_{lN}-Y_{(l-1)N}|\mathcal{B}_{(l-1)N}).$$
    Note that
    $$X_{n+1}-X_{n}= Y_{(n+1)N}-\mathbb{E}(Y_{(n+1)N}|\mathcal{B}_{nN}).$$
    In particular $\left\{X_{n}\right\}$ is a martingale difference adapted to $\left\{\mathcal{B}_{nN}\right\}$ that is
    \begin{align*}
        \mathbb{E}(X_{n+1}-X_{n}|\mathcal{B}_{nN})&=\mathbb{E}\left(Y_{(n+1)N}-\mathbb{E}(Y_{(n+1)N}|\mathcal{B}_{nN}) \Big{\rvert}\mathcal{B}_{nN}\right)\\
        &=0.
    \end{align*}
    We want to use the Azuma bound for martingale differences as it appears in Alon and Spencer  \cite[Theorem~7.2.1]{Alon2000}, therefore we need to check that the difference $X_{n+1}-X_{n}$ is uniformly bounded for all $n>t$.\\
    To find such bound, note that we can express $D_{nN}(\omega)=\sqcup_{j}D_{(n+1)N}(x_{j})$ for appropriate $x_{j}\in \Sigma$ which depend on $j, n$ and $\omega$. Hence
    \begin{align*}
        &|X_{n+1}(\omega)-X_{n}(\omega)|= \left|Y_{(n+1)N}(\omega)-\mathbb{E}(Y_{(n+1)N}|\mathcal{B}_{nN})(\omega)\right|\\
        &=\left|\log\norm{A^{((n+1)N)}(\omega)}-\dfrac{1}{\mu(D_{nN}(\omega))}\int_{D_{nN}(\omega)}\log\norm{A^{((n+1)N)}(z)}d\mu(z)\right|\\
        &=\left|\log\norm{A^{((n+1)N)}(\omega)}-\sum_{j}\dfrac{\mu(D_{(n+1)N}(x_{j}))}{\mu(D_{nN}(\omega))}\log\norm{A^{((n+1)N)}(x_{j})}\right|\\ 
        &\leq \sum_{j}\dfrac{\mu(D_{(n+1)N}(x_{j}))}{\mu(D_{nN}(\omega))}\left|\log\norm{A^{((n+1)N)}(\omega)}-\log\norm{A^{((n+1)N)}(x_{j})}\right|. \\
    \end{align*}
    Let $m:\text{GL}(d,\mathbb{R})\to\mathbb{R}$ be the co-norm, that is, the least singular value. Note that for matrices $P, Q_{1}, Q_{2}\in \text{GL}(d,\mathbb{R})$
    $$\left|\log\norm{Q_{1}P}-\log\norm{Q_{2}P} \right|\leq \log\max\left\{\dfrac{\norm{Q_{1}}}{m(Q_{2})},\dfrac{\norm{Q_{2}}}{m(Q_{1})}\right\}.$$
    Hence, if we denote $\gamma_{j}=\dfrac{\mu(D_{(n+1)N}(x_{j}))}{\mu(D_{nN}(\omega))}$, we obtain
    \begin{align*}
        |X_{n+1}-X_{n}|&\leq \sum_{j}\gamma_{j}\left|\log\dfrac{\norm{A^{((n+1)N)}(\omega)}}{\norm{A^{((n+1)N)}(x_{j})}}\right|\\
        &\leq\sum_{j}\gamma_{j}\max_{z_{1},z_{2}\in\Sigma}\log\dfrac{\norm{A^{(N)}(z_{1})}}{m(A^{(N)}(z_{2}))}\\
        &\leq a,
    \end{align*}
    where $a>0$ is a constant which depends on $N$. As a consequence, using Azuma's bound, we obtain that
    \begin{align}
        \mu\left\{\frac{1}{n}X_{n}>\epsilon\right\}<e^{-\frac{n\epsilon^{2}}{2a^{2}}}. \label{AzumaVersion1}
    \end{align}
    Our objective now is to estimate the measure of the subset of $\Sigma$ of elements such that the norm of the cocycle grows exponentially faster than the first Lyapunov exponent. Thence, consider $\omega\in \Sigma$ such that
    $$\dfrac{1}{nN}\log\norm{A^{(nN)}(\omega)}>\epsilon.$$
    Note that this is the same as
    $$\dfrac{1}{nN}Y_{nN}(\omega)>\epsilon.$$
    Hence, using \eqref{desigualdad2Version1}
    \begin{align*}
        \dfrac{1}{nN}X_{n}(\omega)&=\dfrac{1}{nN}\left(Y_{nN}(\omega)-\sum_{l=1}^{n}\mathbb{E}(Y_{lN}-Y_{(l-1)N}|\mathcal{B}_{(l-1)N})(\omega)\right)\\
        &\geq \dfrac{1}{nN}Y_{nN}(\omega)-\dfrac{1}{n}\sum_{l=t+1}^{n}\left(\dfrac{1}{N}\mathbb{E}(Y_{lN}-Y_{(l-1)N}|\mathcal{B}_{(l-1)N})(\omega)\right)\\
        &>\frac{\epsilon}{2}. 
    \end{align*}
    Thus, we can conclude from \eqref{AzumaVersion1} that for $n>t$
    \begin{align*}
        \mu\left\{\omega:\dfrac{1}{nN}\log\norm{A^{(nM)}(\omega)}>\epsilon\right\}&\leq \mu\left\{\omega:\dfrac{1}{n}X_{n}>\dfrac{N\epsilon}{2}\right\}\\
        &\leq Me^{-\frac{N\epsilon^2}{8a^{2}}(nN)}.
    \end{align*}
    Consider $s\geq t$ such that
    $$(s-1)N\epsilon\geq \max_{\substack{x\in\Sigma\\\vspace{0.15cm} 1\leq l\leq N-1}}\log\norm{\left(A^{(l)}(x)\right)^{-1}}.$$
    Then, we find the following bounds for $n\geq s$ when $1\leq l\leq N-1$
    \begin{align*}
        &\mu\left\{\log\norm{A^{(nN+l)}(x)}>2(nN+l)\epsilon\right\} \\
        &=\mu\left\{\log\norm{(A^{(N-l)}(T^{nN+l}x))^{-1}A^{((n+1)N)}(x)}>2(nN+l)\epsilon\right\} \\
        &\leq\mu\left\{\log\norm{A^{((n+1)N)}(x)}>2(nN+l)\epsilon-\log\norm{(A^{(N-l)}(T^{nN+l}x))^{-1}}\right\}\\
        &\leq \mu\left\{\log\norm{A^{((n+1)N)}(x)}>(n+1)N\epsilon\right\}\\
        &\leq Me^{-\frac{N\epsilon^2}{8a^{2}}((n+1)N)}\\
        &\leq Me^{-\frac{N\epsilon^2}{8a^{2}}(nN+l)}.
    \end{align*}
    We conclude by sufficiently enlarging the constant $M=M(N)$ so that the statement is true for $0\leq m\leq sN-1$.
\end{proof}

Although the previous result is general, we require a stronger LDE. Before we proceed with the LDE, we have to set up some notation. 

Consider the adjoint and inverse cocycles respectively as
$$A_{*}(\hat{x})=A(T^{-1}\hat{x})^{*} \text{ and } A^{-1}(\hat{x})=A(T^{-1}\hat{x})^{-1},$$
over $(\hat{\Sigma}, T^{-1})$. Note that since the cocycle only depends on the zeroth and first coordinate, it is possible to define the original cocycle on the positive one-sided (left) shift space $\Sigma$, while the inverse and adjoint cocycles are defined over the negative one-sided (right) shift $\Sigma^{-}$.

We also define the \text{inverse adjoint cocycle} on $(\hat{\Sigma}, T)$ as
$$A_{*}^{-1}(\hat{x})=(A(\hat{x})^{-1})^{*}.$$
\begin{remark} \label{primerRemark}
Notice that if $A$ is not strongly irreducible, neither are the cocycles $A^{-1}$ and $A_{*}$. Indeed, if we consider an invariant family of maps $V_{1}(\hat{x}),\dots, V_{m}(\hat{x})$ as in the definition, it is readily seen that the same family of maps is invariant for $A^{-1}$ and the family $V_{1}^{\perp}(\hat{x}),\dots, V_{m}^{\perp}(\hat{x})$ is invariant for $A_{*}$. Thus, we see that $A^{-1}_{*}$ is strongly irreducible if and only if $A$ is strongly irreducible.
\end{remark}
Consider a singular value decomposition
\begin{align*}
    A^{(n)}(\hat{x})&=L_{n}(\hat{x})\Delta_{n}(\hat{x})R_{n}(\hat{x}),
    \intertext{where $\Delta_{n}(\hat{x})=\text{diag}(a_{1}(\hat{x})\geq\dots\geq a_{d}(\hat{x}))$ and $L_{n},R_{n}\in O(d)$. Then}
    A^{(-n)}_{*}(\hat{x})&=L_{n}(\hat{x})\Delta^{-1}_{n}(\hat{x})R_{n}(\hat{x}).
\end{align*}
Let $R_{\infty}(\hat{x})$ be a limit point of the sequence $\{R_{n}(\hat{x})\}_{n\in \mathbb{N}}$. Then, we define $\xi(\hat{x})=R_{\infty}^{*}(\hat{x})e_{1}$ where $R_{\infty}^{*}(\hat{x})$ is the adjoint of $R_{\infty}(\hat{x})$. When the top Lyapunov exponent $\lambda_{1}$ is simple, we can infer that $\xi(\hat{x})$ is the \textit{bottom one-dimensional Oseledets space} at $\hat{x}$ for $A^{-1}_{*}$:
$$\xi(\hat{x})=\left\{\overline{v}\in\mathbb{RP}^{d-1}: \lim\dfrac{1}{n}\log\norm{A^{(-n)}_{*}(\hat{x})v}=-\lambda_{1}\right\}.$$

We define a measure on $\hat{\Sigma}\times \mathbb{PR}^{d-1}$ as
$$\hat{m}=\int\delta_{\xi(\hat{x})}d\hat{\mu}(\hat{x}).$$
Note that $\hat{m}$ is  invariant under $F_{A_{*}^{-}}$. Moreover, note that we can disintegrate the measure $\hat{m}$ as $\{\hat{m}_{\hat{x}}=\delta_{\xi(\hat{x})}\}$. Then, for $\hat{x},\hat{y}$ such that $\hat{y}\in W^{s}_{loc}(\hat{x})$, since $A_{*}^{-1}$ uniquely depends on the future and it is locally constant, the stable holonomy connecting $\hat{x}$ and $\hat{y}$ is trivial and
\begin{align*}
    \xi(\hat{x})&=\left\{\overline{v}\in\mathbb{RP}^{d-1}: \lim\dfrac{1}{n}\log\norm{A^{(-n)}_{*}(\hat{x})v}=-\lambda_{1}\right\}\\
    &=\left\{\overline{v}\in\mathbb{RP}^{d-1}: \lim\dfrac{1}{n}\log\norm{A^{(-n)}_{*}(\hat{y})v}=-\lambda_{1}\right\}\\
    &=\xi(\hat{y}).
\end{align*}
Hence, we have that
$$\hat{m}_{\hat{x}}=\hat{m}_{\hat{y}}.$$
We conclude that $\hat{m}$ is an invariant s-state. Before we continue with the next lemma, note that if we define $\Pi_{*}\hat{m}=m$ as before, taking a disintegration, for $x^{-}\in\Sigma^{-}$ we obtain
$$m_{x^{-}}=\int \delta_{\xi(\hat{x})}d\hat{\mu}_{x^{-}}^{u}(\hat{x}).$$

The following lemma is a core component to obtain a stronger LDE. In particular, this is the step of the proof where strong irreducibility is required since we need to apply the weightlessness of hyperplanes we obtained in Theorem \ref{pesoCero}. We note that a similar result was obtained in \cite[Proposition~6.3]{Duarte} in the context of twisting and pinching cocycles which are not necessarily locally constant. Correspondingly, their proof also requires zero weight on hyperplanes, which was proved in \cite[Proposition~5.1]{bonatti_viana_2004}.

\begin{lemma} \label{LemmaIntegral}
    Let $\hat{\mu}$ be the equilibrium state of a Hölder potential. Consider a strongly irreducible 2-step cocycle $A$ such that $\lambda_1(\hat{\mu})>\lambda_2(\hat{\mu})$. Then for any unit vector $v\in\mathbb{R}^{d}$, 
    $$\lim_{n\to\infty}\dfrac{1}{n}\int\log\norm{A^{(n)}(\hat{x})v}d\hat{\mu}(\hat{x})=\lambda_1(A,\hat{\mu}).$$
    Moreover, the previous limit is uniform on unit vectors.
\end{lemma}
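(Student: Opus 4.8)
\emph{Strategy.} I would prove the two-sided uniform estimate
\[
\limsup_{n\to\infty}\;\sup_{\norm{v}=1}f_n(v)\;\le\;\lambda_1(A,\hat{\mu})\;\le\;\liminf_{n\to\infty}\;\inf_{\norm{v}=1}f_n(v),\qquad f_n(v):=\frac1n\int\log\norm{A^{(n)}(\hat{x})v}\,d\hat{\mu}(\hat{x}),
\]
which forces $f_n(v)\to\lambda_1(A,\hat\mu)$ uniformly over the unit sphere. The upper bound is soft: $\norm{A^{(n)}(\hat{x})v}\le\norm{A^{(n)}(\hat{x})}$ for unit $v$, so $f_n(v)\le\frac1n\int\log\norm{A^{(n)}}\,d\hat{\mu}$, and $n\mapsto\int\log\norm{A^{(n)}}\,d\hat{\mu}$ is subadditive by $T$-invariance of $\hat{\mu}$, whence $\frac1n\int\log\norm{A^{(n)}}\,d\hat{\mu}\to\lambda_1(A,\hat{\mu})$ by Kingman's subadditive ergodic theorem.

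\emph{The uniform lower bound — main step.} Fix $\epsilon>0$ and put $B_n(v):=\{\hat{x}:\frac1n\log\norm{A^{(n)}(\hat{x})v}<\lambda_1-\epsilon\}$. From the singular value decomposition $A^{(n)}(\hat{x})=L_n(\hat{x})\Delta_n(\hat{x})R_n(\hat{x})$ one reads off the elementary inequality $\norm{A^{(n)}(\hat{x})v}\ge a_1(A^{(n)}(\hat{x}))\,|\langle v,u(A^{(n)}(\hat{x})^{*})\rangle|$, the scalar factor being, for unit $v$, the gap distance from $\bar v$ to the hyperplane $s(A^{(n)}(\hat{x}))$. Hence $B_n(v)$ is contained in the union of the set $\{\frac1n\log a_1(A^{(n)})<\lambda_1-\frac\epsilon2\}$, whose $\hat{\mu}$-measure tends to $0$ (it does not involve $v$, and $\frac1n\log\norm{A^{(n)}}\to\lambda_1$ almost surely by Furstenberg--Kesten, hence in probability), and the set $\{|\langle v,u(A^{(n)}(\hat{x})^{*})\rangle|<e^{-n\epsilon/2}\}$, which is where the work lies.

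\emph{Controlling the second set via zero weight on hyperplanes.} Since $\lambda_1>\lambda_2$, the top growth directions of the adjoint stabilize: $u(A^{(n)}(\hat{x})^{*})\to\xi(\hat{x})$ for $\hat{\mu}$-a.e. $\hat{x}$, hence in probability. Using that $|\langle v,w\rangle|$ is the gap distance from $\bar w$ to $v^{\perp}$ for unit vectors, for any fixed $\delta>0$ and all $n$ large enough that $e^{-n\epsilon/2}<\delta$ we get
\[
\hat{\mu}\bigl(\{|\langle v,u(A^{(n)}(\hat{x})^{*})\rangle|<e^{-n\epsilon/2}\}\bigr)\le\hat{\mu}\bigl(\{|\langle v,\xi(\hat{x})\rangle|<2\delta\}\bigr)+o_n(1)=\nu\bigl(\{w:|\langle v,w\rangle|<2\delta\}\bigr)+o_n(1),
\]
where the $o_n(1)$ is uniform in $v$ and $\nu$ is the projection to $\mathbb{RP}^{d-1}$ of the invariant $s$-state $\hat m=\int\delta_{\xi(\hat{x})}\,d\hat{\mu}$ built above. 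By Theorem \ref{pesoCero} (applicable since $A$ is strongly irreducible, $2$-step, with simple top exponent) the measure $\nu$ gives zero weight to every projective hyperplane, and a short compactness argument on the unit sphere upgrades this to $\sup_{\norm v=1}\nu(\{w:|\langle v,w\rangle|\le t\})\to0$ as $t\to0$. Choosing $\delta$ small, then $n$ large, and combining with the previous paragraph yields $\sup_{\norm v=1}\hat{\mu}(B_n(v))\to0$.

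\emph{Conclusion.} Since $A$ is $\text{GL}(d,\mathbb{R})$-valued and locally constant, $\norm{A^{(n)}(\hat{x})v}\ge c^{\,n}$ with $c:=\min_{\hat x}m(A(\hat{x}))>0$, so the integrand $\frac1n\log\norm{A^{(n)}(\hat{x})v}$ is bounded below by $\log c$ everywhere, giving
\[
f_n(v)\;\ge\;(\lambda_1-\epsilon)\bigl(1-\hat{\mu}(B_n(v))\bigr)+(\log c)\,\hat{\mu}(B_n(v)).
\]
Letting $n\to\infty$ and using $\sup_{\norm v=1}\hat{\mu}(B_n(v))\to0$ gives $\liminf_n\inf_{\norm v=1}f_n(v)\ge\lambda_1-\epsilon$; as $\epsilon$ was arbitrary, the lower bound — and with it the uniform convergence — follows. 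I expect the main obstacle to be precisely the passage from Theorem \ref{pesoCero}, a statement about each fibre conditional $m_{x^{-}}$, to the uniform-in-$v$ smallness of $\hat{\mu}(B_n(v))$; everything else is either soft (subadditivity, Furstenberg--Kesten convergence in probability) or routine linear algebra.
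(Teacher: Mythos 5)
Your proof is correct, and it reaches the same destination using the same key ingredients as the paper — the invariant $s$-state $\hat m=\int\delta_{\xi(\hat x)}\,d\hat\mu$, the stabilization $u(A^{(n)*})\to\xi$, and Theorem~\ref{pesoCero} — but the packaging is genuinely different. The paper works pointwise: it shows $\frac1n\log\norm{A^{(n)}(\hat x)v_n}\to\lambda_1$ $\hat\mu$-a.e. even for a \emph{moving} sequence of unit vectors $v_n\to v$, then applies dominated convergence (using the uniform two-sided bound on $\frac1n\log\norm{A^{(n)}v}$ coming from local constancy of $A$), and finally proves uniformity by contradiction via a subsequence extraction from a hypothetical sequence of bad pairs $(n_i,v_i)$. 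You instead work distributionally: you decompose the bad set $B_n(v)$ into a $v$-independent piece controlled by Furstenberg--Kesten and a piece controlled by zero weight on hyperplanes, and you get uniformity in $v$ from the outset via a compactness argument on $\mathbb{S}^{d-1}$ that upgrades $\nu(v^\perp)=0$ to $\sup_{\norm v=1}\nu(\{w:|\langle v,w\rangle|\le t\})\to 0$. Both routes need exactly the same deep input (Theorem~\ref{pesoCero} applied to the $s$-state of $A_*^{-1}$), and they are of comparable length; the paper's ``moving-sequence a.e.\ convergence + DCT'' device is slightly more economical, while your decomposition is more explicit about where uniformity in $v$ comes from and would, with the LDEs of Section~\ref{Section4} in hand, also yield a rate of convergence, which the paper's argument does not directly expose. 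One small clean-up worth recording: in your intermediate inequality, the $o_n(1)$ term is $\hat\mu\{d(u(A^{(n)*}),\xi)\ge\delta\}$, which is indeed independent of $v$; stating this explicitly makes the uniformity airtight.
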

\begin{proof}
    Consider $\Omega\subseteq \hat{\Sigma}$ the full measure set of points $\hat{x}$ such that $\xi(\hat{x})$ is well defined. Then, if $\{v_{n}\}_{n\in\mathbb{N}}$ is a sequence of unit vectors such that $v_{n}\to v$ as $n\to\infty$ and we take the singular value decomposition as before, we obtain for $\hat{x}\in \Omega$
    \begin{align*}
        \lim_{n\to\infty}\dfrac{\norm{A^{(n)}(\hat{x})v_{n}}}{\norm{A^{(n)}(\hat{x})}}=&\lim_{n\to\infty}\dfrac{\norm{L_{n}(\hat{x})\Delta_{n}(\hat{x})R_{n}(\hat{x})v_{n}}}{\norm{L_{n}(\hat{x})\Delta_{n}(\hat{x})R_{n}(\hat{x})}}\\ \numberthis \label{limiteOseledets}
        =&\lim_{n\to\infty}\dfrac{\norm{\Delta_{n}(\hat{x})R_{n}(\hat{x})v_{n}}}{\norm{\Delta_{n}(\hat{x})}}\\
        =&\langle R_{\infty}(\hat{x})v,e_{1}\rangle\\
        =&\langle v, R_{\infty}^{*}(\hat{x})e_{1}\rangle\\
        =&\langle v, \xi(\hat{x})\rangle.
    \end{align*}
Consider the set
$$N(x^-)=\left\{\hat{x}\in \mathcal{W}^{u}_{loc}(x^-): \lim_{n\to\infty}\dfrac{\norm{A^{(n)}(\hat{x})v_{n}}}{\norm{A^{(n)}(\hat{x})}}\not= \langle v, \xi(\hat{x})\rangle \text{ or the limit does not exist}\right\}.$$
Note that if there exist $x^{-}\in\Sigma^{-}$ such that $\hat{\mu}_{x^{-}}^{u}(N(x^{-}))>0$, then for all $y^{-}\in [x_{0}]\subset \Sigma^{-}$ we have $\hat{\mu}_{y^{-}}^{u}(N(y^{-}))>0$ since $\hat{\mu}_{x^{-}}^{u}\approx \hat{\mu}_{y^{-}}^{u}$. Then the equality (\ref{limiteOseledets}) does not happen in a set of positive measure. But we just checked the equality on a full measure set. Thus, such $x^{-}$ does not exist.

Given any $x^{-}\in \Sigma^{-}$ and any unit vector $v\in\mathbb{R}^{d}$, consider the set
\begin{align*}
    B_{x^{-}, v}&=\left\{\hat{x}\in\mathcal{W}_{loc}^{u}: v\perp \xi(\hat{x})\right\}\\
    &=\left\{\hat{x}\in\mathcal{W}_{loc}^{u}:  \xi(\hat{x})\in v^{\perp}\right\}.\\
    \intertext{Hence, using Theorem \ref{pesoCero}}
    \hat{\mu}_{x^{-}}^{u}(B_{x^{-}, v})&=\hat{\mu}_{x^{-}}^{u}\left\{\hat{x}\in\mathcal{W}_{loc}^{u}:  \xi(\hat{x})\in v^{\perp}\right\}\\
    &=\int \delta_{\xi(\hat{x})}(v^{\perp})d\hat{\mu}_{x^{-}}^{u}(\hat{x})\\
    &=m_{x^{-}}(v^{\perp})\\
    &=0.
\end{align*}
Thus, from (\ref{limiteOseledets}), any sequence $v_{n}\to v$ and all elements $\hat{x}$ in the set of full measure $B_{x^{-}, v}^{c}$ with respect to $\hat{\mu}_{x^{-}}^{u}$ satisfy
$$\lim_{n\to\infty}\dfrac{1}{n}\log{\norm{A^{(n)}(\hat{x})v_{n}}}=\lambda_{1}(A,\hat{\mu}).$$
Since this happens in every local unstable set, we obtain that for $\hat{\mu}$-a.e $\hat{x}$
$$\lim_{n\to\infty}\dfrac{1}{n}\log{\norm{A^{(n)}(\hat{x})v_{n}}}=\lambda_{1}(A,\hat{\mu}).$$
Using the fact that $A$ is a two-step cocycle
$$\dfrac{1}{n}\log\norm{A^{(n)}}\leq \max_{x_{0},x_{1}\in X}\log\norm{A(x_{0},x_{1})},$$
we use the dominated convergence theorem to conclude that
\begin{equation}
  \lim_{n\to\infty}\dfrac{1}{n}\int\log{\norm{A^{(n)}(\hat{x})v_{n}}}d\hat{\mu}(\hat{x})=\lambda_{1}(A,\hat{\mu}).  \label{limiteIntegral}
\end{equation}
We will finish by noting that the limit happens uniformly for all vectors $\norm{v}=1$. Indeed, if the limit was not uniform, fixing some $\epsilon>0$ we could find a sequence of natural numbers $\{n_{i}\}\subset \mathbb{N}$ and a sequence of unit vectors $\{v_i\}\subset \mathbb{S}^{d-1}$ such that
$$\dfrac{1}{n_i}\int\log{\norm{A^{(n_i)}(\hat{x})v_{i}}}d\hat{\mu}(\hat{x})+\epsilon<\lambda_{1}(A,\hat{\mu}).$$
In particular, taking a convergent subsequence $v_{m}\to v$
$$\limsup_{m\to\infty}\dfrac{1}{m}\int\log{\norm{A^{(m)}(\hat{x})v_{m}}}d\hat{\mu}(\hat{x})<\lambda_{1}(A,\hat{\mu}),$$
which is impossible by (\ref{limiteIntegral}).
\end{proof}

Now we will proceed to prove a stronger version of Lemma \ref{PrimerLDE} which requires the use of strong irreducibility and the simplicity of the top Lyapunov exponent since we need to use Lemma \ref{LemmaIntegral}. The proof is similar to that of Lemma \ref{PrimerLDE}, so we will shorten some steps since they were previously explained. 

\begin{lemma} \label{SegundoLDE}
     Let $\mu$ be the equilibrium state of a Hölder potential. Consider a strongly irreducible 2-step cocycle $A$ such that $\lambda_1(\mu)>\lambda_2(\mu)$. Given $\epsilon>0$, there exist $M>0, \beta>0$, such that for all $n\in\mathbb{N}$ and all unit vectors $v\in\mathbb{R}^{d}$
    $$\mu\left\{x\in\Sigma:\left\lvert \frac{1}{n}\log\norm{A^{(n)}(x)v}-\lambda_{1}(A,\hat{\mu})\right\rvert>\epsilon \right\}<Me^{-\beta n}.$$
\end{lemma}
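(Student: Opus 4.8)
The plan is to mirror the martingale argument of Lemma~\ref{PrimerLDE}, but now tracking $\log\norm{A^{(n)}(x)v}$ for a fixed unit vector $v$ rather than the operator norm, and replacing the role of $\int\log\norm{A^{(N)}}d\mu$ with the quantities controlled by Lemma~\ref{LemmaIntegral}. The upper tail $\frac1n\log\norm{A^{(n)}(x)v}-\lambda_1>\epsilon$ is immediate from Lemma~\ref{PrimerLDE}, since $\norm{A^{(n)}(x)v}\le\norm{A^{(n)}(x)}$ for a unit vector $v$; so the real content is the lower tail $\frac1n\log\norm{A^{(n)}(x)v}<\lambda_1-\epsilon$, and the main job is to produce a uniform-in-$v$ exponential bound there.

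First I would rescale so that $\lambda_1(A,\hat\mu)=0$. Using Lemma~\ref{LemmaIntegral} — crucially its \emph{uniformity} over unit vectors — fix $N$ so that $\big|\frac1N\int\log\norm{A^{(N)}(\hat x)w}\,d\hat\mu\big|<\epsilon/4$ simultaneously for every unit vector $w$. The next step is the analogue of inequality~\eqref{desigualdad1}: apply the exponential mixing Lemma~\ref{exponentialmixing} to the Hölder function $\hat x\mapsto\log\norm{A^{(N)}(\hat x)w}$ to get, for $i$ large (independent of $w$, since the Hölder norms of $\log\norm{A^{(N)}(\cdot)w}$ are uniformly bounded as $A^{(N)}$ depends on finitely many coordinates and $\norm{A^{(N)}(x)w}$ is bounded above and below uniformly in $w$), that the cylinder averages of $\log\norm{A^{(N)}(T^i z)w}$ are within $\epsilon/2$ of $0$. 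Then set $Y_i(x)=\log\norm{A^{(i)}(x)v}$ and, for the lower bound, use $\norm{A^{(N+i)}(z)v}\ge m(A^{(N)}(T^i z))\norm{A^{(i)}(z)v}$ to get $\frac1N\mathbb{E}(Y_{i+N}-Y_i\mid\mathcal{B}_i)\ge -\epsilon/2$ after replacing $\log m(A^{(N)}(\cdot))$ by $\log\norm{A^{(N)}(\cdot)w}$ for a suitable unit vector $w$ and invoking the mixing estimate again. Build the martingale $X_n=Y_{nN}-\sum_{l}\mathbb{E}(Y_{lN}-Y_{(l-1)N}\mid\mathcal{B}_{(l-1)N})$ exactly as before; the increment bound $|X_{n+1}-X_n|\le a$ with $a=a(N)$ follows from the same co-norm inequality $|\log\norm{Q_1 P v}-\log\norm{Q_2 P v}|\le\log\max\{\norm{Q_1}/m(Q_2),\norm{Q_2}/m(Q_1)\}$, again uniformly in $v$. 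Azuma's inequality then gives $\mu\{\frac1n X_n<-\epsilon\}<e^{-n\epsilon^2/(2a^2)}$, and combining with the drift estimate yields $\mu\{\frac{1}{nN}\log\norm{A^{(nN)}(x)v}<-\epsilon\}\le Me^{-\beta nN}$. Finally, interpolate between multiples of $N$ using $\norm{A^{(nN+l)}(x)v}\ge\norm{A^{((n+1)N)}(x)v}/\norm{A^{(N-l)}(T^{nN+l}x)}$ and enlarge $M$ to absorb small $n$, exactly as in the closing paragraph of Lemma~\ref{PrimerLDE}.

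I expect the main obstacle to be bookkeeping the \emph{uniformity in $v$} throughout: every place where Lemma~\ref{PrimerLDE} invoked a single integral $\int\log\norm{A^{(N)}}d\mu$ or a single Hölder norm must now be replaced by a bound that holds for all unit vectors at once. The payoff is that this uniformity is exactly what Lemma~\ref{LemmaIntegral} supplies for the drift term, and the norm equivalences $m(A^{(N)}(x))\le\norm{A^{(N)}(x)v}\le\norm{A^{(N)}(x)}$ together with the fact that $A^{(N)}$ takes finitely many values make the Hölder-norm and increment bounds automatically uniform in $v$. So the estimate goes through with the same constants structure; I would present it by pointing to Lemma~\ref{PrimerLDE} for the repeated steps and only writing out the two modifications: the lower-bound version of \eqref{desigualdad2Version1} using the co-norm, and the use of the uniform limit in Lemma~\ref{LemmaIntegral} to choose $N$ independently of $v$.
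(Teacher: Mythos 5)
Your proposal is correct and follows essentially the same approach as the paper: rescale, use the uniformity in Lemma~\ref{LemmaIntegral} to pick $N$ independently of $v$, apply the mixing estimate to $\log\norm{A^{(N)}(\cdot)w}$ with $w$ the (cylinder-constant) normalized image $A^{(i)}(z)v/\norm{A^{(i)}(z)v}$, run the Azuma martingale argument, and interpolate between multiples of $N$. The only organizational difference is that you split into upper tail (reduced to Lemma~\ref{PrimerLDE} via $\norm{A^{(n)}(x)v}\le\norm{A^{(n)}(x)}$) and lower tail, whereas the paper obtains both sides at once from a two-sided Azuma bound; both are valid.
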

\begin{proof}
    By rescaling the cocycle, we may assume that $\lambda_1(A,\hat{\mu})=0$. 
    Given $\epsilon>0$ and any unit vector $v\in\mathbb{R^{d}}$, fix $N\in \mathbb{N}$ such that
    $$\left|\int\frac{1}{N}\log\norm{A^{(N)}(x)v}d\mu\right|<\frac{\epsilon}{4}.$$
    Note that Lemma \ref{LemmaIntegral} ensures that $N$ does not depend on $v$.\\
    Using Lemma \ref{exponentialmixing}, we can find $k\in \mathbb{N}$ sufficiently large such that for all $i\geq k$
    \begin{align*}
        &\left|\dfrac{1}{N \mu(D_{i}(\omega))}\int_{D_{k}(\omega)}\log\norm{A^{(N)}(T^{i}x)v}d\mu\right|<\dfrac{\epsilon}{2}. 
    \end{align*}
    Consider the random variables $Y_{i}:\Sigma\to\mathbb{R}$ defined by $Y_{i}(x):=\log\norm{A^{(i)}(x)v}$. Denote $\mathcal{B}_{i}$ as the $\sigma$-algebra generated by the set of cylinders of length $(i+1)$. Hence
    \begin{align*}
        \mathbb{E}(Y_{N+i}|\mathcal{B}_{i})(\omega)=\dfrac{1}{\mu(D_{i}(\omega))}\int_{D_{i}(\omega)}\log\norm{A^{(N+i)}(z)v}d\mu(z).
    \end{align*}
    Thus, using \eqref{desigualdad1}, we obtain
    \begin{align*}
        \left|\dfrac{1}{N}\mathbb{E}(Y_{N+i}-Y_{i}|\mathcal{B}_i)\right |&=\left |\dfrac{1}{N \mu(D_{i}(\omega))}\int_{D_{i}(\omega)}\log\dfrac{\norm{A^{(N+i)}(z)v}}{\norm{A^{(i)}(z)v}}d\mu(z)\right|\\
        &\leq\left|\dfrac{1}{N \mu(D_{i}(\omega))}\int_{D_{i}(\omega)}\log \norm{A^{(N)}(T^{i}z)\frac{A^{(i)}(z)v}{\norm{A^{(i)}(z)v}}}d\mu(z)\right|\\
        &\leq \dfrac{\epsilon}{2}. \numberthis \label{desigualdad2}
    \end{align*}
    Consider $t\in\mathbb{N}$ such that $tN\geq k$ and define the sequence of random variables
    $$X_{n}=Y_{nN}-\sum_{l=t}^{n}\mathbb{E}(Y_{lN}-Y_{(l-1)N}|\mathcal{B}_{(l-1)N}).$$
    By the same calculation as before, we find that $\left\{X_{n}\right\}$ is a martingale difference adapted to $\left\{\mathcal{B}_{nN}\right\}$.
    
    Using the partition $D_{nN}(\omega)=\sqcup_{j}D_{(n+1)N}(x_{j})$ for appropriate $x_{j}\in \Sigma$, we obtain
    \begin{align*}
        |X_{n+1}-X_{n}|&
        = \sum_{x_{j}\in D_{nN}(\omega)}\dfrac{\mu(D_{(n+1)N}(x_{j}))}{\mu(D_{nN}(\omega))}\left|\log\norm{A^{((n+1)N)}(\omega)v}-\log\norm{A^{((n+1)N)}(x_{j})v}\right|.
    \end{align*}
    Hence, if we denote $\gamma_{j}=\dfrac{\mu(D_{(n+1)N}(x_{j}))}{\mu(D_{nN}(\omega))}$, we obtain
    \begin{align*}
        |X_{n+1}-X_{n}|&\leq \sum_{x_{j}\in D_{nN}(\omega)}\gamma_{j}\left|\log\dfrac{\norm{A^{((n+1)N)}(\omega)v}}{\norm{A^{((n+1)N)}(x_{j})v}}\right|\\
        &\leq\sum_{x_{j}\in D_{nN}(\omega)}\gamma_{j}\max_{z_{1},z_{2}\in\Sigma}\log\dfrac{\norm{A^{(N)}(z_{1})}}{m(A^{(N)}(z_{2}))}\\
        &\leq a,
    \end{align*}
    where $a>0$ is a constant which depends on $N$. As a consequence, using Azuma's bound \cite[Corollary~7.2.2]{Alon2000}, we obtain that
    \begin{align}
        \mu\left\{\left|\frac{1}{n}X_{n}\right|>\epsilon\right\}<2e^{-\frac{n\epsilon^{2}}{2a^{2}}}. \label{Azuma}
    \end{align}
    Consider $\omega\in \Sigma$ such that
    $$\left|\dfrac{1}{nN}\log\norm{A^{(nN)}(\omega)v}\right|>\epsilon.$$
    Note that this is the same as
    $$\left|\dfrac{1}{nN}Y_{nN}(\omega)\right|>\epsilon.$$
    Hence, using \eqref{desigualdad2}
    \begin{align*}
        \left|\dfrac{1}{nN}X_{n}(\omega)\right|&=\left|\dfrac{1}{nN}\left(Y_{nN}(\omega)-\sum_{l=t}^{n}\mathbb{E}(Y_{lN}-Y_{(l-1)N}|\mathcal{B}_{(l-1)N})(\omega)\right)\right|\\
        &\geq \left|\dfrac{1}{nN}Y_{nN}(\omega)\right|-\left|\dfrac{1}{n}\sum_{l=t}^{n}\left(\dfrac{1}{N}\mathbb{E}(Y_{lN}-Y_{(l-1)N}|\mathcal{B}_{(l-1)N})(\omega)\right)\right|\\
        &>\frac{\epsilon}{2}. 
    \end{align*}
    Thus, we can conclude from \eqref{Azuma}
    \begin{align*}
        \mu\left\{\omega:\left|\dfrac{1}{nN}\log\norm{A^{(nN)}(\omega)v}\right|>\epsilon\right\}&\leq \mu\left\{\omega:\left|\dfrac{1}{n}X_{n}\right|>\dfrac{N\epsilon}{2}\right\}\\
        &\leq M e^{-\frac{N\epsilon^2}{8a^{2}}(nN)}.
    \end{align*}
    Taking $s\geq t$ sufficiently large, we see that for $1\leq l\leq N-1$
        \begin{align*}
        &\mu\left\{\log\norm{A^{(nN+l)}(x)v}>2(nN+l)\epsilon\right\} \\
        &\leq \mu\left\{\log\norm{A^{((n+1)N)}(x)v}>(n+1)N\epsilon\right\}\\
        &\leq Me^{-\frac{N\epsilon^2}{8a^{2}}(nN+l)}.
    \end{align*}
    We conclude by sufficiently enlarging the constant $M=M(N)$ so that the statement is true for $0\leq m\leq sN-1$. 
\end{proof}

We observe that the previous result is similar to \cite[Theorem~7.1]{Duarte}, which is proved in the context of pinching and twisting cocycles. Their result does not require the cocycle to be locally constant and yields estimates for nearby fiber bunched cocycles. However, unlike Lemma \ref{SegundoLDE}, their result does not contain information about the growth of a fixed unit vector under the cocycle. Such information will be necessary for the proofs in Section \ref{Section5}.

We will finish this section with a LDE for the growth of the norm of the cocycle $A$. The following argument is standard. 

\begin{corollary}\label{Corolario}
    Let $\mu$ be the equilibrium state of a Hölder potential. Consider a strongly irreducible 2-step cocycle $A$ such that $\lambda_1(\mu)>\lambda_2(\mu)$. Given $\epsilon>0$, there exist $M>0, \beta>0$, such that for all $n\in\mathbb{N}$
    $$\mu\left\{x\in\Sigma:\left\lvert \frac{1}{n}\log\norm{A^{(n)}(x)}-\lambda_{1}(A,\hat{\mu})\right\rvert>\epsilon \right\}<Me^{-\beta n}.$$ 
\end{corollary}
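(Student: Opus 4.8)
The plan is to obtain the two--sided estimate for $\norm{A^{(n)}(x)}$ by splitting the bad event into an \emph{upward} and a \emph{downward} deviation and handling each with an estimate already proved. The upward deviation $\bigl\{x:\tfrac{1}{n}\log\norm{A^{(n)}(x)}-\lambda_{1}(A,\hat\mu)>\epsilon\bigr\}$ is precisely what Lemma \ref{PrimerLDE} controls (and that lemma needs neither strong irreducibility nor the spectral gap), so it contributes a bound $M_{1}e^{-\beta_{1}n}$ for suitable $M_{1},\beta_{1}>0$. It remains to bound the downward deviation
$$L_{n}=\Bigl\{x\in\Sigma:\tfrac{1}{n}\log\norm{A^{(n)}(x)}-\lambda_{1}(A,\hat\mu)<-\epsilon\Bigr\}.$$

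For $L_{n}$ I would fix, once and for all, a single unit vector $v\in\mathbb{R}^{d}$, and use the elementary comparison $\norm{A^{(n)}(x)}=\sup_{\norm{w}=1}\norm{A^{(n)}(x)w}\geq\norm{A^{(n)}(x)v}$. Thus every $x\in L_{n}$ satisfies $\tfrac{1}{n}\log\norm{A^{(n)}(x)v}\leq\tfrac{1}{n}\log\norm{A^{(n)}(x)}<\lambda_{1}(A,\hat\mu)-\epsilon$, so that
$$L_{n}\subseteq\Bigl\{x\in\Sigma:\bigl\lvert\tfrac{1}{n}\log\norm{A^{(n)}(x)v}-\lambda_{1}(A,\hat\mu)\bigr\rvert>\epsilon\Bigr\}.$$
Now Lemma \ref{SegundoLDE}, applied to this one vector $v$, gives $M_{2},\beta_{2}>0$ with $\mu(L_{n})<M_{2}e^{-\beta_{2}n}$ for all $n$; this is exactly the place where the hypotheses of strong irreducibility and $\lambda_{1}(\mu)>\lambda_{2}(\mu)$ are used, via that lemma. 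A union bound then finishes the proof: with $M=M_{1}+M_{2}$ and $\beta=\min\{\beta_{1},\beta_{2}\}$ one gets $\mu\bigl\{x:\lvert\tfrac{1}{n}\log\norm{A^{(n)}(x)}-\lambda_{1}(A,\hat\mu)\rvert>\epsilon\bigr\}<Me^{-\beta n}$ for all $n$.

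I do not expect a genuine obstacle at this stage: all the analytic content (the martingale/Azuma scheme, the weightlessness of hyperplanes from Theorem \ref{pesoCero}, and the uniform convergence of $\tfrac{1}{n}\int\log\norm{A^{(n)}v}\,d\hat\mu$ from Lemma \ref{LemmaIntegral}) has already been absorbed into Lemmas \ref{PrimerLDE} and \ref{SegundoLDE}. The only point worth isolating is that the \emph{lower} tail of $\log\norm{A^{(n)}}$ — the direction missing from Lemma \ref{PrimerLDE} — is inherited for free from the lower tail of $\log\norm{A^{(n)}v}$ through the trivial bound $\norm{A^{(n)}}\geq\norm{A^{(n)}v}$, which is why a single fixed direction suffices and no uniformity over $v$ is needed here.
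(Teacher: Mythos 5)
Your proof is correct. It differs slightly from the paper's in how the upper tail is handled: the paper defines the equivalent norm $\norm{G}_{\mathcal{V}}=\max_{1\leq j\leq d}\norm{Gv_j}$ over an orthonormal basis and handles \emph{both} tails entirely through Lemma~\ref{SegundoLDE} (the upper tail via a union bound over the $d$ basis vectors, the lower tail via a single basis vector, then equivalence of norms), whereas you split the event into an upward and downward deviation, dispose of the upward one directly with Lemma~\ref{PrimerLDE} (which indeed needs neither strong irreducibility nor the spectral gap), and only invoke Lemma~\ref{SegundoLDE} for the downward tail through the trivial inequality $\norm{A^{(n)}(x)}\geq\norm{A^{(n)}(x)v}$ for a single fixed $v$. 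Both routes are sound; yours is arguably a bit more economical since it avoids the equivalence-of-norms adjustment to $\epsilon$ and makes explicit which hypotheses are needed for which half, while the paper's is more symmetric in relying on a single lemma.
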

\begin{proof}
    As before, we assume $\lambda_{1}=0$. Consider $v_{1},\dots, v_{d}\in\mathbb{R}^{d}$ an orthonormal basis of and for $G\in\text{GL}(d,\mathbb{R})$, define the norm
    $$\norm{G}_{\mathcal{V}}=\max_{1\leq j\leq d}\norm{Gv_{j}}.$$
    We see that 
    \begin{align*}
        \mu\left\{x\in\Sigma: \dfrac{1}{n}\log\norm{A^{(n)}(x)}_{\mathcal{V}}>\epsilon\right\}\leq\sum_{1\leq j\leq d}\mu\left\{x\in\Sigma: \dfrac{1}{n}\log\norm{A^{(n)}(x)v_j}>\epsilon\right\}.
    \end{align*}
    Thus, applying Lemma \ref{SegundoLDE} to the right hand side and since all norms are equivalent in finite dimensional vector spaces, the result follows. 
\end{proof}

\section{Distance between direction of maximum growth and hyperplane of least growth} \label{Section5}
In this section, we will closely follow the scheme used in Aoun and Sert \cite[Section~3]{aoun}.

Throughout this section, we will assume that $A:\hat{\Sigma}\to \text{GL}(d,\mathbb{R})$ is a 2-step strongly irreducible cocycle whose first Lyapunov exponent is simple. Moreover, we will denote $\lambda_{1}=\lambda_{1}(A,\hat{\mu})=\lambda_{1}(A_{*},\hat{\mu})$ and $\lambda_{2}=\lambda_{2}(A,\hat{\mu})=\lambda_{2}(A_{*},\hat{\mu})$. Additionally, we fix $0<\epsilon<(\lambda_1-\lambda_2)/20$.

We will begin by showing that except for a set of exponentially small measure, iterations of the cocycle applied to any unit vector exponentially approach the direction of maximum growth.
\begin{lemma} \label{Lemma5.1}
    There exist positive constants $\alpha>0, \beta>0, M>0$ such that for every unit vector $v\in \mathbb{R}^d$ 
    $$\hat\mu\{d(u(A^{(n)}(\hat{x})), \overline{A^{(n)}(\hat{x})v})\geq e^{-\alpha n}\}\leq Me^{-\beta n}.$$
\end{lemma}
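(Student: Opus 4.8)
The plan is to control the position of $A^{(n)}(\hat x)v$ relative to the direction of maximum growth $u(A^{(n)}(\hat x))=\overline{L_n(\hat x)e_1}$ using the singular value decomposition $A^{(n)}(\hat x)=L_n(\hat x)\Delta_n(\hat x)R_n(\hat x)$. Writing $w=R_n(\hat x)v$ with coordinates $(w_1,\dots,w_d)$ in the canonical basis, one has $A^{(n)}(\hat x)v=L_n(\hat x)(a_1w_1,\dots,a_dw_d)$, so that
\begin{align*}
d\bigl(u(A^{(n)}(\hat x)),\,A^{(n)}(\hat x)v\bigr)^2 \;=\; \frac{\sum_{j\geq 2}a_j^2 w_j^2}{\sum_{j\geq 1}a_j^2 w_j^2}\;\leq\;\frac{a_2^2}{a_1^2}\cdot\frac{1}{w_1^2}\;=\;\frac{a_2(A^{(n)}(\hat x))^2}{a_1(A^{(n)}(\hat x))^2}\cdot\frac{\norm{A^{(n)}(\hat x)}^2}{\norm{A^{(n)}(\hat x)v}^2}\cdot\frac{\norm{A^{(n)}(\hat x)v}^2}{a_1^2 w_1^2},
\end{align*}
and since $\norm{A^{(n)}(\hat x)v}^2\geq a_1^2 w_1^2$ the last factor is at most $1$; thus the gap is bounded by $\tfrac{a_2(A^{(n)}(\hat x))}{a_1(A^{(n)}(\hat x))}\cdot\tfrac{\norm{A^{(n)}(\hat x)}}{\norm{A^{(n)}(\hat x)v}}$. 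So it suffices to show that each of the two ratios is exponentially small outside an exponentially small set.

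First I would handle the ratio $a_2/a_1$. Using the identity $a_2(g)/a_1(g)=\norm{\mathsf{\Lambda}^2 g}/\norm{g}^2$ recorded in the preliminaries, and noting that $\mathsf{\Lambda}^2 A$ is again a locally constant $2$-step cocycle with top Lyapunov exponent $\lambda_1+\lambda_2$, I would apply Lemma \ref{PrimerLDE} to $\mathsf{\Lambda}^2 A$ (upper bound $\tfrac1n\log\norm{\mathsf{\Lambda}^2 A^{(n)}}<\lambda_1+\lambda_2+\epsilon$ off an exponentially small set) together with Corollary \ref{Corolario} applied to $A$ (giving $\tfrac1n\log\norm{A^{(n)}}>\lambda_1-\epsilon$ off an exponentially small set). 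On the intersection of these good sets,
$$\frac1n\log\frac{a_2(A^{(n)}(\hat x))}{a_1(A^{(n)}(\hat x))}=\frac1n\log\frac{\norm{\mathsf{\Lambda}^2A^{(n)}(\hat x)}}{\norm{A^{(n)}(\hat x)}^2}<(\lambda_1+\lambda_2+\epsilon)-2(\lambda_1-\epsilon)=-(\lambda_1-\lambda_2)+3\epsilon<0,$$
which is the required exponential decay with a definite rate, thanks to $\epsilon<(\lambda_1-\lambda_2)/20$. Since all these statements are first proved on the one-sided shift $\Sigma$ and pulled back via $\pi$ to $\hat\Sigma$ with $\hat\mu$, and $A^{(n)}$ depends only on the nonnegative coordinates, the passage to $\hat\mu$ is routine.

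Second, the ratio $\norm{A^{(n)}(\hat x)}/\norm{A^{(n)}(\hat x)v}$: by Lemma \ref{SegundoLDE}, for the given $v$ one has $\tfrac1n\log\norm{A^{(n)}(\hat x)v}>\lambda_1-\epsilon$ off an exponentially small set, and by Corollary \ref{Corolario}, $\tfrac1n\log\norm{A^{(n)}(\hat x)}<\lambda_1+\epsilon$ off an exponentially small set, so on the intersection this ratio is at most $e^{2\epsilon n}$. Combining, on the intersection of all four good events the total gap is bounded by $e^{(-(\lambda_1-\lambda_2)+3\epsilon)n/2}\cdot e^{2\epsilon n}=e^{-\alpha n}$ for a suitable $\alpha>0$ (again using $\epsilon$ small relative to $\lambda_1-\lambda_2$), while the bad event has measure at most $Me^{-\beta n}$ as a sum of finitely many exponentially small terms. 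The only mildly delicate point — and the place I would be most careful — is the uniformity in $v$: the bound on $a_2/a_1$ and on $\norm{A^{(n)}}$ does not involve $v$, and Lemma \ref{SegundoLDE} already gives a bound $Me^{-\beta n}$ with constants independent of the unit vector $v$ (this uniformity is exactly what Lemma \ref{LemmaIntegral} was arranged to supply), so the final constants $\alpha,\beta,M$ can indeed be chosen independently of $v$.
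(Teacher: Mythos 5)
Your proposal follows the paper's approach exactly: the key estimate is the inequality $d(u(A^{(n)}(\hat{x})),A^{(n)}(\hat{x})v)\leq \frac{a_2(A^{(n)}(\hat{x}))}{a_1(A^{(n)}(\hat{x}))}\cdot\frac{\|A^{(n)}(\hat{x})\|}{\|A^{(n)}(\hat{x})v\|}$, which the paper simply cites from Aoun and Sert as inequality (\ref{eq:1}), followed by Lemma \ref{PrimerLDE} applied to $\mathsf{\Lambda}^2 A$, Lemma \ref{SegundoLDE} applied to $A$, and Corollary \ref{Corolario} applied to $A$ --- the same three LDEs on the same three quantities, with the same use of the uniformity in $v$ supplied by Lemma \ref{LemmaIntegral}.

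One slip in your re-derivation of the key inequality: after reaching $d^2\leq \frac{a_2^2}{a_1^2 w_1^2}$ you factor this as $\frac{a_2^2}{a_1^2}\cdot\frac{\|A^{(n)}(\hat x)\|^2}{\|A^{(n)}(\hat x)v\|^2}\cdot\frac{\|A^{(n)}(\hat x)v\|^2}{a_1^2 w_1^2}$ and try to drop the last factor by arguing it is $\leq 1$; but since $\|A^{(n)}(\hat x)v\|^2=\sum_j a_j^2 w_j^2\geq a_1^2 w_1^2$, that factor is in fact $\geq 1$, and dropping it is not a valid upper bound. The intermediate quantity $a_2^2/(a_1^2w_1^2)$ is \emph{larger} than what you want, so the factorization cannot rescue it. The fix is to avoid passing through $1/w_1^2$: starting from $d^2=\frac{\sum_{j\geq 2}a_j^2w_j^2}{\|A^{(n)}(\hat x)v\|^2}$, bound the numerator by $a_2^2\sum_{j\geq 2}w_j^2\leq a_2^2 = \big(\tfrac{a_2}{a_1}\big)^2\|A^{(n)}(\hat x)\|^2$, which lands directly on the Aoun--Sert bound. (A second, harmless, arithmetic point: the $n/2$ in your final combination $e^{(\lambda_2-\lambda_1+3\epsilon)n/2}\cdot e^{2\epsilon n}$ is spurious, since the bound on $a_2/a_1$ is not square-rooted; the conclusion $\alpha>0$ survives anyway because $\epsilon<(\lambda_1-\lambda_2)/20$.)
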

\begin{proof}
    Take $v\in\mathbb{R}^d$ such that $\norm{v}=1$. From Aoun-Sert \cite{aoun}, we know that for any $g\in \text{GL}(d,\mathbb{R})$:
    \begin{equation} \label{eq:1}
        d(u(g),gv)\leq \dfrac{a_2(g)}{a_1(g)}\dfrac{\Vert g\Vert}{\Vert gv\Vert}=\dfrac{\Vert\mathsf{\Lambda}^2 g\Vert}{\Vert g\Vert^{2}}\dfrac{\Vert g\Vert}{\Vert gv\Vert}.
    \end{equation}
    Note that $\mathsf{\Lambda}^{2} A:{\hat{\Sigma}}\to \text{Gl}(\frac{d(d-1)}{2},\mathbb{R})$ is a 2-step linear cocycle whose first Lyapunov exponent is $\lambda_{1}+\lambda_{2}$. Thus, applying Lemma \ref{PrimerLDE} to  $\mathsf{\Lambda}^{2} A$ and Lemma \ref{SegundoLDE} and Corollary \ref{Corolario} to $A$, we obtain that there exist $M>0$, $\beta>0$ and a set of measure larger than $1-Me^{-\beta n}$ such that for $\alpha=(\lambda_1-\lambda_2)/2$
    \begin{align*}
        d\left(u(A^{(n)}(\hat{x})), \overline{A^{(n)}(\hat{x})v}\right)\leq \dfrac{\left\Vert\mathsf{\Lambda}^2 A^{(n)}(\hat{x})\right\Vert}{\left\Vert A^{(n)}(\hat{x})\right\Vert^2}\dfrac{\left\Vert A^{(n)}(\hat{x})\right\Vert}{\left\Vert A^{(n)}(\hat{x})v\right\Vert}\leq e^{n(\lambda_2-\lambda_1+5\epsilon)}\leq e^{-n\alpha}.
    \end{align*}
\end{proof}
\vspace{-0.1cm}
We will use the following notation for the rest of the paper 
$$A^{[n]}(\hat{x})=(A^{(n)}(\hat{x}))^{*}=A_{*}^{(n)}(T^{n}\hat{x}).$$
Note that using Lemma \ref{SegundoLDE} with $A_{*}$ and using the invariance of the measure with respect to $T$, we obtain the existence of $M>0, \beta>0$ such that for all unit vectors $v\in\mathbb{R}^{d}$

\begin{align*}
&\hat{\mu}\left\{\hat{x}\in\hat{\Sigma}:\left\lvert \frac{1}{n}\log\norm{A^{[n]}(\hat{x})v}-\lambda_{1}(A,\hat{\mu})\right\rvert>\epsilon \right\}\\
&=\hat{\mu}\left\{\hat{x}\in\hat{\Sigma}:\left\lvert \frac{1}{n}\log\norm{A^{(n)}_{*}(T^{n}\hat{x})v}-\lambda_{1}(A,\hat{\mu})\right\rvert>\epsilon \right\}\\
&=\hat{\mu}\left\{\hat{x}\in\hat{\Sigma}:\left\lvert \frac{1}{n}\log\norm{A_{*}^{(n)}(\hat{x})v}-\lambda_{1}(A,\hat{\mu})\right\rvert>\epsilon \right\}\\
&<Me^{-\beta n}.    
\end{align*}
Similarly, using Lemma \ref{PrimerLDE}, we obtain that
$$\hat{\mu}\left\{\hat{x}\in\hat{\Sigma}: \frac{1}{n}\log\left(\mathsf{\Lambda}^{2}\norm{A^{[n]}(\hat{x})}\right)-(\lambda_{1}+\lambda_{2})>\epsilon \right\}<Me^{-\beta  n}.$$
Then, we can prove the following lemma, which shows that the direction of maximum growth of iterations of the cocycle with right increments stabilizes at an exponential rate with the possible exception of a set of exponentially small measure. 
\begin{lemma} \label{Lemma5.2}
    There exist positive constants $\alpha>\epsilon, \beta, M$ and $N\in \mathbb{N}$ such that for all $n\geq N$
    $$\hat{\mu}\left(d(u(A^{[4n]}(\hat{x})), u(A^{[n]}(\hat{x})))\geq e^{-\alpha n}\right)\leq Me^{-\beta n}$$
\end{lemma}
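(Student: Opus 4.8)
The plan is to compare the direction of maximum growth $u(A^{[n]}(\hat x))$ at scale $n$ with the one at scale $4n$ by using the fact that $A^{[4n]}(\hat x) = A^{[4n]}_{3n}(\hat x)\, A^{[n]}(\hat x)$, where $A^{[4n]}_{3n}(\hat x)$ denotes the matrix recording the increments from time $n$ to time $4n$ in the adjoint cocycle — concretely, $A^{[4n]}(\hat{x}) = (A^{(4n)}(\hat x))^* = A^{(n)}(\hat x)^* \cdot \big(A^{(3n)}(T^n \hat x)\big)^*$ should be read in the order that makes the $n$-step factor the \emph{rightmost} one, so that $u(A^{[4n]}(\hat x))$ is obtained by applying the $3n$-step factor to a vector near $u(A^{[n]}(\hat x))$. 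The key identity I would exploit is that $u(gh)$ equals $u(g \cdot h_{\mathrm{dir}})$ where $h_{\mathrm{dir}}$ is essentially the image of the direction of maximum growth of $h$; more precisely, $u(gh) = \overline{gh\, w}$ for $w$ a unit vector spanning the top right singular direction of $h$, and this image is within $d(u(g), g h w) \le \frac{a_2(g)}{a_1(g)}\frac{\|g\|}{\|ghw\|}$ of $u(g)$ by the inequality \eqref{eq:1} used in Lemma \ref{Lemma5.1}.

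First I would fix $g = A^{[4n]}_{3n}(\hat x)$ (the $3n$-step increment factor of the adjoint cocycle, which is again a $2$-step cocycle driven by an iterate of $\hat\mu$) and $h = A^{[n]}(\hat x)$. Let $w$ be a unit vector in the top right-singular direction of $h$, so $u(A^{[n]}(\hat x)) = \overline{hw}$ up to the ambient projectivization, and $u(A^{[4n]}(\hat x)) = \overline{g h w}$. Then by \eqref{eq:1},
$$
d\big(u(A^{[4n]}(\hat x)),\, u(A^{[n]}(\hat x))\big) \;=\; d\big(u(gh), \overline{hw}\big)
$$
and I want to bound this by $d(u(g), \overline{ghw}) + d(u(g), \overline{hw})$-type terms; cleanly, it suffices to show $\overline{hw}$ and $\overline{ghw}$ are both exponentially close to a common reference, or directly to apply the triangle inequality after observing that $d(u(gh), \overline{hw}) \le d(u(gh), u(g)) + d(u(g), \overline{hw})$ is useless unless we know $\overline{hw}$ is close to $u(g)$ — which we do \emph{not} control. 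The correct route is instead the Lipschitz estimate for the action: $d(u(gh), \overline{hw}) = d(\overline{ghw}, \overline{hw})$ and one uses that $ghw$ and $hw$ differ by the action of $g$, but since we only want closeness of \emph{directions of maximum growth} at two different times of the \emph{same} product, the honest comparison is $d(u(A^{[4n]}(\hat x)), u(A^{[n]}(\hat x))) = d(u(gh), u(h))$, and this is controlled by Lemma \ref{Lemma5.1} applied with the single vector $v = w$ chosen to be the top singular direction of $h$: namely $u(h) = \overline{hw}$ and $d(u(gh), \overline{g(hw/\|hw\|)}) \le \frac{a_2(g)}{a_1(g)} \frac{\|g\|}{\|g(hw)\|}$, while simultaneously $\overline{g(hw)} = u(gh)$ only if $hw$ were exactly $u(h)$, which it is. So in fact $d(u(gh), u(h)) = d(\overline{ghw}, \overline{hw})$, and I bound this by noting it is at most $d(u(g), \overline{ghw}) + d(u(g), \overline{hw}) \le 2 \cdot \frac{a_2(g)}{a_1(g)} \cdot \frac{\|g\|}{\min(\|ghw\|, \text{hmm})}$ — this last manipulation is where care is needed.

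The clean way, which I would adopt, is: apply \eqref{eq:1} with the matrix $g$ and the \emph{unit vector} $v$ equal to the normalized image $hw/\|hw\|$ (which is exactly a representative of $u(h)$), obtaining $d(u(gh), u(h)) = d(u(g \cdot v'), v')$ where $v'$ represents $u(h)$... no: $d(u(gh), u(g \cdot \text{(top dir of }h)))$. Let me just say: since $u(A^{[n]}(\hat x))$ is a \emph{fixed direction} once $\hat x$ is fixed, I apply Lemma \ref{Lemma5.1} conditionally, treating the $3n$-step increment cocycle $g = A^{[4n]}_{3n}(\hat x)$ as the cocycle and the random direction $v = u(A^{[n]}(\hat x))$ (as a unit vector) as input. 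Lemma \ref{Lemma5.1} as stated requires a \emph{deterministic} unit vector $v$, so the obstacle is handling a random $v$. I would circumvent this by the standard fibered argument already used repeatedly in the paper: because $A^{[n]}(\hat x)$ depends only on coordinates $0,\dots,n$ and the increment matrix $A^{[4n]}_{3n}(\hat x)$ depends on coordinates $n,\dots,4n$, one disintegrates $\hat\mu$ along the first $n$ coordinates (using the local product / equilibrium-state structure of Section \ref{Section3}), so that on each such fiber $v = u(A^{[n]}(\hat x))$ is \emph{constant}, Lemma \ref{Lemma5.1} applies to give a bound $M e^{-\beta' (3n)}$ uniform over the fiber, and integrating recovers the global bound. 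Taking $\alpha = (\lambda_1 - \lambda_2)/2 - 5\epsilon > \epsilon$ (using $\epsilon < (\lambda_1-\lambda_2)/20$) and combining the two exceptional sets (the one from Lemma \ref{Lemma5.1} and, if needed, one from Corollary \ref{Corolario} controlling $\|ghw\|$ from below by $e^{n(\lambda_1 - \epsilon)}$) yields the claimed estimate for all $n \ge N$.

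\textbf{Main obstacle.} The technical heart is rigorously reducing from the random input direction $v = u(A^{[n]}(\hat x))$ to the deterministic-vector statement of Lemma \ref{Lemma5.1}. This is exactly the point where the independence-like structure of locally constant cocycles over an equilibrium state must be used: one needs that conditioning on the first $n$ symbols fixes $u(A^{[n]}(\hat x))$ while leaving the law of the later increments controlled (with constants uniform in the conditioning), and one must check the exponential rate survives the conditioning — this follows from the exponential mixing (Lemma \ref{exponentialmixing}) and the fact that all the large-deviation bounds in Section \ref{Section4} were proved with constants independent of the base point and of the cylinder. Everything else (the geometric inequality \eqref{eq:1}, a triangle inequality in the gap metric, and the arithmetic with $\epsilon$) is routine.
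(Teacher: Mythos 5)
Your proposal diverges from the paper's proof at the very first step and never recovers. The paper's argument is simpler than what you attempt: fix an \emph{arbitrary deterministic} unit vector $v\in\mathbb{R}^d$, apply the triangle inequality
\[
d\big(u(A^{[4n]}(\hat x)),u(A^{[n]}(\hat x))\big)\le d\big(u(A^{[4n]}(\hat x)),A^{[4n]}(\hat x)v\big)+d\big(A^{[4n]}(\hat x)v,A^{[n]}(\hat x)v\big)+d\big(A^{[n]}(\hat x)v,u(A^{[n]}(\hat x))\big),
\]
bound the first and third terms by Lemma~\ref{Lemma5.1} (applied to a single fixed $v$, so no conditioning is ever needed), and bound the middle term by the wedge identity $A^{[4n]}(\hat x)v\wedge A^{[n]}(\hat x)v=\mathsf{\Lambda}^2A^{[n]}(\hat x)\big(A^{[3n]}(T^n\hat x)v\wedge v\big)$, which factors out the common \emph{left} factor $A^{[n]}(\hat x)$ of $A^{[4n]}(\hat x)=A^{[n]}(\hat x)\,A^{[3n]}(T^n\hat x)$ and is then controlled by the LDEs. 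You miss this auxiliary-vector trick entirely and instead try to feed the \emph{random} direction $u(A^{[n]}(\hat x))$ into Lemma~\ref{Lemma5.1}, which is precisely what that lemma does not allow.

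Two concrete errors make your route fail even in outline. First, you set up $A^{[4n]}=gh$ with $g$ the $3n$-step increment and $h=A^{[n]}$, putting $A^{[n]}$ on the \emph{right}; but since $A^{[4n]}=(A^{(4n)})^*=(A^{(n)})^*(A^{(3n)}(T^n\cdot))^*=A^{[n]}\cdot A^{[3n]}(T^n\cdot)$, the $n$-step factor is the \emph{left} one. This is not cosmetic: the whole geometric intuition that $u(A^{[4n]})$ should stabilize near $u(A^{[n]})$ rests on $A^{[n]}$ being applied last, and with your ordering the conclusion $u(gh)\approx u(h)$ is simply false in general (one would only get $u(gh)\approx u(g)$ when $g$ has a singular gap). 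You visibly sense the problem mid-proof (``this last manipulation is where care is needed'', ``no: \dots'') but do not resolve it. Second, the ``fibered/conditional'' reduction you propose to handle the random input direction is not carried out and would require a conditional LDE with constants uniform over length-$n$ cylinders; that is genuinely additional machinery, not a routine consequence of Lemma~\ref{exponentialmixing}, and it is entirely avoided by the paper's deterministic-$v$ triangle inequality. What your proposal is missing, in short, is the deterministic auxiliary vector $v$ together with the $\mathsf{\Lambda}^2$ factorization for the middle term; once you have those, Lemma~\ref{Lemma5.1}, Lemma~\ref{PrimerLDE}, Lemma~\ref{SegundoLDE} and Corollary~\ref{Corolario} give everything and no conditioning is required.
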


This lemma can be deduced from \cite[Proposition~4.18]{klein} using the fact that the gap between the first and the second Lyapunov exponents generate a gap in the first two singular values of $A^{(n)}(\hat{x})$ for $n$ large enough (see Avalanche Principle \cite[Section~3.3]{klein}). For the convenience of the reader, we present a self-contained proof based on our previous results. 

\begin{proof}
    Take $v\in\mathbb{R}^d$ a unit vector. Then, for all $C>0$ and $n\in\mathbb{N}$:
    \begin{align*}
        &\hat{\mu}\left(d(u(A^{[4n]}(\hat{x})), u(A^{[n]}(\hat{x})))\geq e^{-Cn}\right)\\
        &\leq\hat{\mu}\left(d(u(A^{[4n]}(\hat{x})),\overline{A^{[4n]}(\hat{x})v})\geq \dfrac{e^{-Cn}}{3}\right)+
        \hat{\mu}\left(d(\overline{A^{[n]}(\hat{x})v}, u(A^{[n]}(\hat{x})))\geq \dfrac{e^{-Cn}}{3}\right)
        \\&\hspace{4cm}+\hat{\mu}\left(d(\overline{A^{[4n]}(\hat{x})v},\overline{A^{[n]}(\hat{x})v})\geq \dfrac{e^{-Cn}}{3}\right)
    \end{align*}
    From Lemma \ref{Lemma5.1}, we know there exist $\alpha_1>0 ,M_1>0, \beta>0$ such that
    \begin{align*}
        \hat{\mu}\left(d(u(A^{[4n]}(\hat{x})),\overline{A^{[4n]}(\hat{x})v})\geq e^{-4\alpha_1 n}\right)&\leq M_1e^{-4n\beta_1}, \\
        \hat{\mu}\left(d(\overline{A^{[n]}(\hat{x})v}, u(A^{[n]}(\hat{x})))\geq e^{-\alpha_1 n}\right)&\leq Me^{-n\beta_1}.
    \end{align*}
    Hence, we need only to find bounds for the measure of $\left\{d(\overline{A^{[4n]}(\hat{x})v}, \overline{A^{[n]}(\hat{x})v}\right)\geq e^{-\alpha_2n}\}$ for some constant $\alpha_2>0$.
    
    We have the following bound
    \begin{align*}
    d(\overline{A^{[4n]}(\hat{x})v}, \overline{A^{[n]}(\hat{x})v})&=\dfrac{\Vert \mathsf{\Lambda}^2 A^{[n]}(\hat{x})(A^{[3n]}(T^n \hat{x})v\wedge v) \Vert}{\Vert A^{[4n]}(\hat{x})v\Vert \Vert A^{[n]}(\hat{x})v\Vert}\\    
     &\leq \dfrac{\Vert \mathsf{\Lambda}^2 A^{[n]}(\hat{x})\Vert \Vert A^{[3n]}(T^n \hat{x})\Vert }{\Vert A^{[4n]}(\hat{x})v\Vert \Vert A^{[n]}(\hat{x})v\Vert}.
    \end{align*}
    Hence, using Lemma \ref{PrimerLDE} for $\mathsf{\Lambda}^2 A^{[n]}(\hat{x})$, Lemma \ref{SegundoLDE} on $ A^{[4n]}(\hat{x})v$ and $A^{[n]}(\hat{x})v$ and Corollary \ref{Corolario} on $A^{[3n]}(T^n \hat{x})$, we obtain that there exist $M_2>0$ and $\beta_2>0$ and a set of measure greater than $1-M_2e^{-\beta_2 n}$ such that
    \begin{align*}
        \dfrac{\Vert \mathsf{\Lambda}^2 A^{[n]}(\hat{x})\Vert \Vert A^{[3n]}(T^n \hat{x})\Vert}{\Vert A^{[4n]}(\hat{x})v\Vert \Vert A^{[n]}(\hat{x})v\Vert}\leq e^{n(\lambda_2-\lambda_1+9\epsilon)}<e^{\alpha_2 n},
    \end{align*}
    where $\alpha_2=\frac{\lambda_1-\lambda_2}{10}$. We conclude that that:
    \begin{align*}
        \hat\mu\left\{d(A^{[4n]}(\hat{x})v, A^{[n]}(\hat{x})v)> e^{-\alpha_2n} \right\}&\leq \hat{\mu}\left\{\hat{x}\in\hat{\Sigma}:\dfrac{\Vert \mathsf{\Lambda}^2 A^{[n]}(\hat{x})\Vert \Vert A^{[3n]}(T^n \hat{x})\Vert}{\Vert A^{[4n]}(\hat{x})v\Vert \Vert A^{[n]}(\hat{x})v\Vert}> e^{-\alpha_2 n}\right\}\\
        &\leq M_2e^{-n\beta_{2} }. 
    \end{align*} 
\end{proof}
\begin{remark} \label{Remark5.2}
    Note that we had to use a cocycle with right increments to properly factorize the matrices applied to both elements of the wedge product. Thus, since $A^{(4n)}(\hat{x})$ and $A^{(n)}(T^{3n}\hat{x})v$ coincide in the first $n$ left factors, we are able to obtain the same bounds for $d(\overline{A^{(4n)}(\hat{x})v}, \overline{A^{(n)}(T^{3n}\hat{x})v})$. Hence, replicating the process of Lemma \ref{Lemma5.2}, we get
    $$\hat{\mu}\left(d(u(A^{(4n)}(\hat{x})), u(A^{(n)}(T^{3n}\hat{x})))\geq e^{-\alpha n}\right)\leq Me^{-\beta n}.$$
\end{remark}
The following lemma shows that the direction of maximum growth of iterations of the cocycle does not converge exponentially to any hyperplane, excepting maybe on a set of exponentially small measure. 
\begin{lemma} \label{Lemma5.3}
    There exist positive constants $\beta>0$ and $M>0$ such that for every $n\geq N$ where $4\leq e^{N\epsilon}$ and for every unit vector $w\in \mathbb{R}^d$ 
    $$\hat\mu\{d(u(A^{(n)}(\hat{x})), w^{\perp})\leq 3e^{-3\epsilon n}\}\leq Me^{-\beta n}.$$
\end{lemma}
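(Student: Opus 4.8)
The plan is to bound $\norm{A^{[n]}(\hat{x})w}$ from above in terms of $d(u(A^{(n)}(\hat x)), w^\perp)$ via the singular value decomposition, and then to contradict the large deviation lower bound for $\norm{A^{[n]}(\hat x)w}$ whenever this distance is exponentially small. First I would record a geometric inequality. Writing $A^{(n)}(\hat{x}) = L_n\Delta_n R_n$ with $\Delta_n = \mathrm{diag}(a_1,\dots,a_d)$ and $L_n,R_n$ orthogonal, we have $A^{[n]}(\hat x) = (A^{(n)}(\hat x))^* = R_n^* \Delta_n L_n^*$, so for a unit vector $w$, putting $u = L_n^* w$ (a unit vector with coordinates $u_j = \langle w, L_n e_j\rangle$),
$$\norm{A^{[n]}(\hat x)w}^2 = \norm{\Delta_n u}^2 = \sum_{j} a_j^2 u_j^2 \le a_1^2 u_1^2 + a_2^2.$$
Here $a_1 = \norm{A^{(n)}(\hat x)}$, $a_2 = \norm{\mathsf{\Lambda}^2 A^{(n)}(\hat x)}/\norm{A^{(n)}(\hat x)}$, and $|u_1| = |\langle w, L_n e_1\rangle| = d(u(A^{(n)}(\hat x)), w^\perp)$, since $L_n e_1$ and $w$ are unit vectors and the gap distance from a point $\overline a$ to the projective hyperplane $w^\perp$ equals $|\langle a, w\rangle|$. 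Thus
$$\norm{A^{[n]}(\hat x)w}^2 \le \norm{A^{(n)}(\hat x)}^2\, d(u(A^{(n)}(\hat x)), w^\perp)^2 + \frac{\norm{\mathsf{\Lambda}^2 A^{(n)}(\hat x)}^2}{\norm{A^{(n)}(\hat x)}^2}.$$

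Next I would feed in the large deviation estimates. Let $E_n(w)$ be the intersection of the events $e^{n(\lambda_1-\epsilon)} \le \norm{A^{(n)}(\hat x)} \le e^{n(\lambda_1+\epsilon)}$, $\norm{\mathsf{\Lambda}^2 A^{(n)}(\hat x)} \le e^{n(\lambda_1+\lambda_2+\epsilon)}$, and $\norm{A^{[n]}(\hat x)w} \ge e^{n(\lambda_1-\epsilon)}$; by Corollary \ref{Corolario} applied to $A$, Lemma \ref{PrimerLDE} applied to $\mathsf{\Lambda}^2 A$ (whose top exponent is $\lambda_1 + \lambda_2$), and Lemma \ref{SegundoLDE} applied to $A_*$ together with the $T$-invariance of $\hat\mu$ — precisely the displayed estimate following Lemma \ref{Lemma5.1} — there are $M,\beta>0$ independent of $w$ with $\hat\mu(\hat\Sigma\setminus E_n(w)) \le Me^{-\beta n}$. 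If $\hat x\in E_n(w)$ and $d(u(A^{(n)}(\hat x)), w^\perp)\le 3e^{-3\epsilon n}$, the inequality above yields
$$e^{2n(\lambda_1-\epsilon)} \le \norm{A^{[n]}(\hat x)w}^2 \le 9\,e^{2n(\lambda_1-2\epsilon)} + e^{2n(\lambda_2+2\epsilon)} \le 10\,e^{2n(\lambda_1-2\epsilon)},$$
the last step because $\epsilon < (\lambda_1-\lambda_2)/20$ forces $\lambda_2+2\epsilon < \lambda_1-2\epsilon$. Hence $e^{n\epsilon} \le \sqrt{10} < 4 \le e^{N\epsilon}$, contradicting $n\ge N$.

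Consequently $\{\hat x : d(u(A^{(n)}(\hat x)), w^\perp)\le 3e^{-3\epsilon n}\} \subseteq \hat\Sigma\setminus E_n(w)$, which has measure at most $Me^{-\beta n}$ uniformly in $w$, as desired. I expect the only delicate point to be the geometric inequality of the first step — in particular the identification of $d(u(A^{(n)}(\hat x)), w^\perp)$ with $|\langle w, L_n e_1\rangle|$ — together with the elementary exponent bookkeeping that renders the $\mathsf{\Lambda}^2$-term negligible, which is exactly where the smallness of $\epsilon$ is used; no idea beyond the SVD computation and the already-established large deviation estimates is required.
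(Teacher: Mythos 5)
Your proof is correct and takes essentially the same route as the paper. The paper cites the inequality $\|g^*w\|/\|g^*\| \le d(u(g),w^\perp) + a_2(g)/a_1(g)$ from Bourgain et al. and then applies the same three large-deviation estimates; you simply re-derive (a slightly sharper $\ell^2$-form of) that inequality from the SVD and phrase the exponent bookkeeping as a contradiction, but the ingredients and structure are identical.
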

\begin{proof}
    From Bourgain et all \cite[Lemma~4.1]{bourgain} we have the following inequality: For all $g\in GL(d, \mathbb{R})$ such that $a_{2}(g)/a_1(g)<1$ and all unit vectors $w\in \mathbb{R}^d$
    $$\dfrac{\Vert g^*w\Vert}{\Vert g^{*}\Vert}\leq d(u(g),w^{\perp})+\dfrac{a_2(g)}{a_2(g)}.$$
    Using Lemma \ref{PrimerLDE} on $\mathsf{\Lambda}^2 A^{(n)}$, Lemma \ref{SegundoLDE} on $A^{[n]}(\hat{x})w$ and Corollary \ref{Corolario} on $A^{(n)}(\hat{x})$ and $ A^{[n]}(\hat{x})$,  we obtain $M>0$ and $\beta>0$ such that
    \begin{align*}
        \hat{\mu}\{d(u(A^{(n)}(\hat{x})), w^{\perp}) \leq e^{-2n\epsilon}&-e^{n(\lambda_2-\lambda_1+3\epsilon)} \}\\
        &\leq \hat{\mu}\left\{\dfrac{\left\Vert A^{[n]}(\hat{x})w\right\Vert}{\left\Vert A^{[n]}(\hat{x})\right\Vert}-\dfrac{\left\Vert\mathsf{\Lambda}^2 A^{(n)}(\hat{x})\right\Vert}{\left\Vert A^{(n)}(\hat{x})\right\Vert^2}\leq e^{-2n\epsilon}-e^{n(\lambda_2-\lambda_1+3\epsilon)}\right\} \\
        &\leq Me^{-\beta n}.
    \end{align*}
    Finally, since $4\leq e^{n\epsilon}$
    $$3e^{-3\epsilon n}\leq e^{-2\epsilon n}-e^{n(\lambda_2-\lambda_1+3\epsilon)},$$
    we conclude
    $$\hat\mu\{d(u(A^{(n)}(\hat{x})), w^{\perp})\leq 3e^{-3\epsilon n}\}\leq Me^{-\beta n}.$$
\end{proof}
We need to show that the direction of maximum growth and the hyperplane of least growth of iterates of the cocycle $A$ only get exponentially close on a exponentially small set. In order to get the expected result, we will take advantage of the exponential mixing of the equilibrium measure and the local constancy of the cocycle, which will allow us to compare the distance of the hyperplane of least growth in \say{the present} with the direction of maximum growth in \say{the future} as almost independent objects. We will need the following tools:
\begin{align*}
P_n=\{(\overline{v},\overline{w})\in \mathbb{RP}^{d-1}\times \mathbb{RP}^{d-1} &:  d(\overline{v}^{\perp}, \overline{w})\leq 3e^{-3\epsilon n}\},\\
C_n(\overline{w})=\{[x_0\dots x_{n-1}x_{n}]\subseteq \hat{\Sigma} &: u(A^{(n)}(\hat{x}))=\overline{w}\},\\
C_n^{*}(\overline{v})=\{[x_0\dots x_{n-1}x_{n}]\subseteq \hat{\Sigma} &: s(A^{(n)}(\hat{x}))=\overline{v}^{\perp}\}.
\end{align*}
Consider the following maps:
\begin{align*}
    \phi^1_n: \hat{\Sigma}\longrightarrow &\hspace{0.1cm}\mathbb{RP}^{d-1}\\
    \hat{x}\longmapsto & \hspace{0.1cm} s(A^{(n)}(\hat{x}))^{\perp},\\
    \phi^{2}_{n}: \hat{\Sigma}\longrightarrow & \hspace{0.1cm} \mathbb{RP}^{d-1}\\
    \hat{x}\longmapsto  & \hspace{0.1cm} u(A^{(n)}(T^{3n}\hat{x})),\\
    \Phi_{n}=& \hspace{0.1cm}(\phi^{1}_n,\phi^{2}_n).
    \intertext{From them, we can define the following pushforward measures:}
    \eta_{n}=& \hspace{0.1cm}\Phi_{n*} \hat{\mu},\\
    \nu^1_{n}=& \hspace{0.1cm}\phi^{1}_{n*} \hat{\mu},\\
    \nu^2_{n}=& \hspace{0.1cm}\phi^{2}_{n*} \hat{\mu}.\\
\end{align*}
\begin{lemma}[\textbf{Almost independence}] \label{Lemma5.4}
    There exist $K>0$ and $\gamma\in (0,1)$ such that
    $$|\eta_{n}(P_n)-\nu^1_{n}\times\nu^2_{n}(P_n)|\leq K\gamma^{2n}.$$
\end{lemma}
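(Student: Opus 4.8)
The plan is to decompose both measures over the cylinder partition of $\Sigma$ and thereby reduce the lemma to a uniform exponential mixing estimate for the equilibrium state. It is cleanest to bound the total variation distance $\|\eta_n-\nu^1_n\times\nu^2_n\|_{\mathrm{TV}}=\sup_{P}|\eta_n(P)-(\nu^1_n\times\nu^2_n)(P)|$, which implies the statement at once since testing against $\mathbf 1_{P_n}$ recovers $|\eta_n(P_n)-\nu^1_n\times\nu^2_n(P_n)|$.

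First I would note that, since $A$ is $2$-step, $A^{(n)}(\hat x)$ depends only on the coordinates $x_0,\dots,x_n$; hence $\phi^1_n(\hat x)=s(A^{(n)}(\hat x))^\perp$ depends only on $x_0,\dots,x_n$, while $\phi^2_n(\hat x)=u(A^{(n)}(T^{3n}\hat x))$ depends only on $x_{3n},\dots,x_{4n}$. Both maps therefore factor through the projection $\pi\colon\hat\Sigma\to\Sigma$, so, using $\pi_*\hat\mu=\mu$, I may regard $\eta_n,\nu^1_n,\nu^2_n$ as pushforwards from $(\Sigma,\mu)$. Write $\mathcal C_n$ for the set of $(n{+}1)$-cylinders of $\Sigma$; then $\phi^1_n$ is constant on each $[a]\in\mathcal C_n$, say $\phi^1_n\equiv v_{[a]}$, and $\phi^2_n$ is constant on each $T^{-3n}[b]$, $[b]\in\mathcal C_n$, say $\phi^2_n\equiv w_{[b]}$. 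Since $\{[a]\cap T^{-3n}[b]\}_{[a],[b]\in\mathcal C_n}$ is a partition of $\Sigma$, and $\nu^1_n=\sum_{[a]}\mu([a])\,\delta_{v_{[a]}}$, $\nu^2_n=\sum_{[b]}\mu([b])\,\delta_{w_{[b]}}$ (using $\mu(T^{-3n}[b])=\mu([b])$), for every Borel $P\subseteq\mathbb{RP}^{d-1}\times\mathbb{RP}^{d-1}$ one obtains
\begin{align*}
\eta_n(P)-(\nu^1_n\times\nu^2_n)(P)=\sum_{[a],[b]\in\mathcal C_n}\mathbf 1_{P}(v_{[a]},w_{[b]})\Bigl(\mu\bigl([a]\cap T^{-3n}[b]\bigr)-\mu([a])\,\mu([b])\Bigr),
\end{align*}
and therefore, bounding each indicator by $1$,
\begin{align*}
\bigl\|\eta_n-\nu^1_n\times\nu^2_n\bigr\|_{\mathrm{TV}}\le\sum_{[a],[b]\in\mathcal C_n}\Bigl|\mu\bigl([a]\cap T^{-3n}[b]\bigr)-\mu([a])\,\mu([b])\Bigr|.
\end{align*}

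The next step is the key estimate. The cylinder $[a]$ constrains the coordinates $0,\dots,n$ and $T^{-3n}[b]$ constrains $3n,\dots,4n$, so the two are separated by the $2n-1$ coordinates $n{+}1,\dots,3n{-}1$. I would invoke the exponential $\psi$-mixing of $\mu$: there are $C>0$ and $\gamma\in(0,1)$, depending only on the subshift and the potential, such that $\bigl|\mu([a]\cap T^{-3n}[b])-\mu([a])\mu([b])\bigr|\le C\gamma^{2n}\,\mu([a])\mu([b])$ for all $[a],[b]\in\mathcal C_n$. Substituting and using $\sum_{[a]}\mu([a])=\sum_{[b]}\mu([b])=1$ gives $\|\eta_n-\nu^1_n\times\nu^2_n\|_{\mathrm{TV}}\le C\gamma^{2n}$, which is the lemma with $K=C$.

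The step I expect to be the main obstacle is precisely this $\psi$-mixing estimate, which is \emph{not} obtained by inserting $\mathbf 1_{[b]}$ as test function into Lemma~\ref{exponentialmixing}, since $\|\mathbf 1_{[b]}\|_\alpha$ grows like $2^{n\alpha}$ and swamps the decay rate. I would handle it through the Ruelle transfer operator $\mathcal L$ of the normalized Hölder potential $\varphi$, which satisfies $\mathcal L\mathbf 1=\mathbf 1$, $\mathcal L^{*}\mu=\mu$, and has a spectral gap on $\mathcal H_\alpha(\Sigma)$: writing $\mu([a]\cap T^{-3n}[b])=\int\mathbf 1_{[a]}\,\mathcal L^{3n}\mathbf 1_{[b]}\,d\mu$, the Gibbs property together with Hölder control of the Birkhoff sums of $\varphi$ shows that $\mathcal L^{\,n+1}\mathbf 1_{[b]}$ equals, up to an admissibility indicator, the exponential of an $(n{+}1)$-step Birkhoff sum and has $\mathcal H_\alpha$-norm at most $C\mu([b])$ uniformly in $n$ and $[b]$; applying the spectral gap to the remaining iterates then gives $\|\mathcal L^{3n}\mathbf 1_{[b]}-\mu([b])\|_\infty\le C'\gamma^{2n}\mu([b])$, the estimate needed. (Staying within Lemma~\ref{exponentialmixing} is also possible: decompose $\eta_n(P_n)$ by conditioning on the $3n$-cylinders of $\Sigma$ — each of which already fixes $\phi^1_n$ — apply the lemma on each such cylinder against a bounded test function depending on only $n{+}1$ coordinates, and choose the Hölder exponent $\alpha$ small enough that the decay dominates the resulting $2^{(n+1)\alpha}$; the cylinderwise errors are uniform and sum against $\sum\mu(\text{cyl})=1$ with no loss.) Everything else is bookkeeping.
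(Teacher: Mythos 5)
Your proof is correct and follows essentially the same route as the paper: decompose over $(n+1)$-cylinders (the paper groups them into the level sets $C_n^{*}(\overline v)$, $C_n(\overline w)$ of $\phi^1_n,\phi^2_n$; you sum cylinder-by-cylinder and bound the total variation, which is only cosmetically different) and then invoke $\psi$-mixing of the equilibrium state across the $2n$-coordinate gap, which the paper obtains by citing Bowen's Theorem~1.25 on unions of cylinders. Your observation that Lemma~\ref{exponentialmixing} is not adequate here — because $\norm{\mathbf 1_{[b]}}_\alpha$ grows exponentially in $n$ — is exactly right and is why the paper reaches for the stronger $\psi$-mixing statement with the multiplicative error $K\hat\mu(O_1)\hat\mu(O_2)\gamma^{2n}$ rather than its own correlation lemma; your transfer-operator sketch is a reasonable way to re-derive that fact if one prefers not to cite Bowen.
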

\begin{proof}
    Let us calculate the measure of $P_n$ under each measure:
    \begin{align*}
        \eta_{n}(P_n)=&\sum_{(\overline{v_1},\overline{v_2})\in P_n}\hat{\mu}\left\{\hat{x}\in\hat{\Sigma}:\phi^1_{n}(\hat{x})=\overline{v_1}^{\perp}, \phi^2_{n}(\hat{x})=\overline{v_2} \right\}\\
        =&\sum_{(\overline{v_1},\overline{v_2})\in P_n}\hat{\mu}\left(C_{n}^{*}(\overline{v_1})\cap T^{-3n}C_{n}(\overline{v_2})\right).\\
        \intertext{Similarly}
        \nu^1_{n}\times\nu^2_{n}(P_n)=&\sum_{(\overline{v_1},\overline{v_2})\in P_n}\hat{\mu}(C_{n}^{*}(\overline{v_1}))\hat{\mu}(T^{-3n}C_{n}(\overline{v_2}))\\
        \nu^1_{n}\times\nu^2_{n}(P_n)=&\sum_{(\overline{v_1},\overline{v_2})\in P_n}\hat{\mu}(C_{n}^{*}(\overline{v_1}))\hat{\mu}(C_{n}(\overline{v_2}))
    \end{align*}
   Due to the exponential mixing on unions of cylinders for equilibrium states due to Bowen \cite[Theorem~1.25]{Bowen2008}, we know that there is $\gamma\in(0,1)$ and $K>0$ such that for any pair of unions of cylinders $O_1,O_2\subseteq \hat{\Sigma}$ of length $(n+1)$ 
   $$\left|\hat{\mu}(O_1\cap T^{-3n}O_2)-\hat{\mu}(O_1)\hat{\mu}(O_2)\right|\leq K\hat{\mu}(O_1)\hat{\mu}(O_2)\gamma^{3n-n}.$$
   Hence
   \begin{align*}
       \left |\eta_{n}(P_n)-\nu^1_{n}\times\nu^2_{n}(P_n)\right|&\leq \sum_{(\overline{v_1},\overline{v_2})\in P_n}\left|\hat{\mu}\left(C_{n}^{*}(\overline{v_1})\cap T^{-3n}C_{n}(\overline{v_2})\right)-\hat{\mu}(C_{n}^{*}(\overline{v_1}))\hat{\mu}(C_{n}(\overline{v_2}))\right|\\
       &\leq \sum_{(\overline{v_1},\overline{v_2})\in P_n}K\hat{\mu}(C_{n}^{*}(\overline{v_1}))\hat{\mu}(C_{n}(\overline{v_2}))\gamma^{2n}\\
       &\leq  K\gamma^{2n}.
   \end{align*}
\end{proof}
The following is the main lemma of this section. We will take advantage of the fact that the hyperplanes of least growth stabilize exponentially fast, that the distance between directions of maximum growth in the present and in the future do not stray off and that the direction of maximum growth in the future and the hyperplane of least growth in the present are almost independent objects. 
\begin{lemma} \label{Lemma5.5}
    Take $N\in \mathbb{N}$ such that $\frac{\lambda_1-\lambda_2}{10}>\frac{\log{3}}{N}$ and $4\leq e^{N\epsilon}$. Then for every $n\geq 4N$, there exist $M>0, \beta>0, K>0$ and $\gamma\in (0,1)$ such that
    $$\hat{\mu}\left(d(u(A^{(n)}(\hat{x})), s(A^{(n)}(\hat{x})))\leq e^{-3\epsilon \lfloor n/4\rfloor}\right)\leq Me^{-\beta \lfloor n/4\rfloor}+K\gamma^{n/2}.$$
\end{lemma}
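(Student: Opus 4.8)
The plan is to take $k:=\lfloor n/4\rfloor$ and to decouple the two ``present-time'' objects $u(A^{(n)}(\hat x))$ and $s(A^{(n)}(\hat x))$ — which both depend on \emph{all} of $x_0,\dots,x_n$ and are therefore strongly correlated — into functions supported on disjoint, well-separated coordinate blocks, and then to combine the exponential mixing of $\hat\mu$ with the weightlessness of hyperplanes. Put $\phi^2(\hat x):=u(A^{(k)}(T^{n-k}\hat x))$, which depends only on $x_{n-k},\dots,x_n$ (and equals $\phi^2_k$ when $4\mid n$), and recall that $\phi^1_k(\hat x)=s(A^{(k)}(\hat x))^\perp=u(A^{[k]}(\hat x))$ depends only on $x_0,\dots,x_k$; since $k\le n/4$, these two blocks are separated by $n-2k\ge n/2$ positions.

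The first step is two \emph{stabilization estimates}: for some rate $\alpha>3\epsilon$ there are $M,\beta>0$ so that, off a set of $\hat\mu$-measure $\le Me^{-\beta k}$, one has both $d\big(u(A^{(n)}(\hat x)),\phi^2(\hat x)\big)\le e^{-\alpha k}$ and $d\big(s(A^{(n)}(\hat x)),\phi^1_k(\hat x)^\perp\big)\le e^{-\alpha k}$, the latter being the same as $d\big(u(A^{[n]}(\hat x)),u(A^{[k]}(\hat x))\big)\le e^{-\alpha k}$. These follow by repeating verbatim the proofs of Remark \ref{Remark5.2} and Lemma \ref{Lemma5.2}, now with $k$ in the role of $n$ and $n-k$ in the role of $3n$: one uses the factorizations $A^{(n)}(\hat x)=A^{(k)}(T^{n-k}\hat x)A^{(n-k)}(\hat x)$ and $A^{[n]}(\hat x)=A^{[k]}(\hat x)A^{[n-k]}(T^k\hat x)$, passes through a fixed unit vector $v$, bounds the wedge product $A^{(n)}(\hat x)v\wedge A^{(k)}(T^{n-k}\hat x)v=\mathsf{\Lambda}^2 A^{(k)}(T^{n-k}\hat x)\big(A^{(n-k)}(\hat x)v\wedge v\big)$ by $\Vert\mathsf{\Lambda}^2 A^{(k)}(T^{n-k}\hat x)\Vert\,\Vert A^{(n-k)}(\hat x)\Vert$, and invokes Lemma \ref{PrimerLDE} for $\mathsf{\Lambda}^2 A$, Lemma \ref{SegundoLDE} and Corollary \ref{Corolario} for $A$ (and for $A_*$ in the second estimate), together with Lemma \ref{Lemma5.1} and the $T$-invariance of $\hat\mu$. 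Because $n\le 4k+3$, every exponent produced is of the form $k(\lambda_2-\lambda_1)+O(k\epsilon)$, which is $<-3\epsilon k$ for $k$ large since $\epsilon<(\lambda_1-\lambda_2)/20$; this is precisely why $\lfloor n/4\rfloor$, and not $n$, controls the rate in the conclusion.

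Granting these, set $E:=\{\hat x: d(u(A^{(n)}(\hat x)),s(A^{(n)}(\hat x)))\le e^{-3\epsilon k}\}$. Off the two exceptional sets, the Lipschitz dependence in the gap metric of the map $(\overline p,\overline q)\mapsto d(\overline q^\perp,\overline p)$ shows, for $k\ge N$ with $N$ large, that on $E$ one has $d\big(\phi^1_k(\hat x)^\perp,\phi^2(\hat x)\big)\le 3e^{-3\epsilon k}$, i.e.\ $(\phi^1_k(\hat x),\phi^2(\hat x))\in P_k$. Writing $\Phi:=(\phi^1_k,\phi^2)$, $\eta:=\Phi_*\hat\mu$, $\nu^1:=(\phi^1_k)_*\hat\mu$, $\nu^2:=(\phi^2)_*\hat\mu$, this gives $\hat\mu(E)\le\eta(P_k)+Me^{-\beta k}$. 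As $\phi^1_k$ and $\phi^2$ are constant on cylinders determined by the separated blocks $[0,k]$ and $[n-k,n]$, the exponential mixing of the equilibrium state on unions of cylinders (Bowen, exactly as in the proof of Lemma \ref{Lemma5.4}) gives $|\eta(P_k)-\nu^1\times\nu^2(P_k)|\le K\gamma^{\,n-2k}\le K\gamma^{\,n/2}$. Finally, by Fubini,
$$\nu^1\times\nu^2(P_k)=\int_{\mathbb{RP}^{d-1}}\nu^2\big(\{\overline w: d(\overline v^\perp,\overline w)\le 3e^{-3\epsilon k}\}\big)\,d\nu^1(\overline v),$$
and for each unit $v$ the inner set is the closed $3e^{-3\epsilon k}$-neighbourhood of the hyperplane $\overline v^\perp$; since $\nu^2=(u(A^{(k)}(\cdot)))_*\hat\mu$ by $T$-invariance and $k\ge N$ with $4\le e^{N\epsilon}$, Lemma \ref{Lemma5.3} bounds it by $Me^{-\beta k}$ uniformly in $v$, whence $\nu^1\times\nu^2(P_k)\le Me^{-\beta k}$. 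Combining and relabelling the constants (and using $k=\lfloor n/4\rfloor$, $n-2k\ge n/2$) gives $\hat\mu(E)\le Me^{-\beta\lfloor n/4\rfloor}+K\gamma^{n/2}$.

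The main obstacle is the pair of stabilization estimates: one must trade the strongly correlated $u(A^{(n)})$ and $s(A^{(n)})$ for functions of the disjoint blocks $[0,k]$ and $[n-k,n]$ at the cost of an error whose exponential rate still exceeds $3\epsilon$ — so that the fattened set $P_k$ still captures $E$ — and this balance is exactly what forces $\epsilon$ to be small and the factor $\lfloor n/4\rfloor$ to appear. Once the two objects have been decoupled, the remaining content is the interplay between exponential mixing (Lemma \ref{Lemma5.4}) and the weightlessness of hyperplanes for $s$-states (Lemma \ref{Lemma5.3}, ultimately Theorem \ref{pesoCero}).
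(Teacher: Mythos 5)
Your proof is correct and takes essentially the same route as the paper: stabilize $u(A^{(n)})$ and $s(A^{(n)})$ onto functions supported on the separated coordinate blocks $[0,k]$ and $[n-k,n]$ via Lemma~\ref{Lemma5.2}/Remark~\ref{Remark5.2}, then combine Lemma~\ref{Lemma5.4}-style exponential mixing with the anti-concentration bound of Lemma~\ref{Lemma5.3}. The only differences are cosmetic: you track the floor-function offsets more carefully (defining $\phi^2$ with $T^{n-k}$ so it is genuinely $\mathcal B_{[n-k,n]}$-measurable for all $n$, rather than only when $4\mid n$), and you keep the sharper stabilization rate $\alpha=(\lambda_1-\lambda_2)-9\epsilon>3\epsilon$ instead of fixing $\alpha=(\lambda_1-\lambda_2)/10$, which makes the triangle-inequality step $e^{-3\epsilon k}+2e^{-\alpha k}\le 3e^{-3\epsilon k}$ transparent.
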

\begin{proof}
    By Lemma \ref{Lemma5.2} and Remark \ref{Remark5.2}, there exist $\alpha_1>\epsilon, \beta_{1}>0$ and $M_{1}>0$ such that
    \begin{align*}
    \hat{\mu}\left(d(u(A^{[n]}(\hat{x})), u(A^{[\lfloor n/4 \rfloor]}(\hat{x})))\geq e^{-\alpha_1 \lfloor n/4 \rfloor}\right)&\leq M_1 e^{-\beta_{1} \lfloor n/4 \rfloor},   \numberthis \label{PasoUaS}\\
    \hat{\mu}\left(d(u(A^{(n))}(\hat{x})), u(A^{(\lfloor n/4\rfloor )}(T^{\lceil 3n/4\rceil}\hat{x})))\geq e^{-\alpha_{1}\lfloor n/4 \rfloor}\right)&\leq M_1e^{-\beta_1 \lfloor n/4 \rfloor}.
    \end{align*}
    Since $s(G)=\{\overline{w}\in\mathbb{RP}^{d-1}: \langle w, u(G^*)\rangle=0\}$, from (\ref{PasoUaS}) we have that
    $$\hat{\mu}\left(d(s(A^{(n)}(\hat{x})), s(A^{(\lfloor n/4 \rfloor)}(\hat{x})))\geq e^{-\alpha_1 \lfloor n/4 \rfloor}\right)\leq M_1e^{-\beta_{1} \lfloor n/4 \rfloor}.$$
    Denote 
    \begin{multline*}
    \Omega_{n}=\left\{d(s(A^{(n)}(\hat{x})), s(A^{(\lfloor n/4 \rfloor)}(\hat{x})))\geq e^{-\alpha_1 \lfloor n/4 \rfloor}\right\} \cup \left\{d(u(A^{(n))}(\hat{x})), u(A^{(\lfloor n/4\rfloor )}(T^{\lceil 3n/4\rceil}\hat{x})))\geq e^{-\alpha_1 \lfloor n/4 \rfloor}\right\}.    
    \end{multline*}
    Consider the following decomposition of events 
    \begin{multline*}
        \{d(u(A^{(n)}(\hat{x})), s(A^{(n)}(\hat{x})))\leq e^{-3\epsilon \lfloor n/4 \rfloor}\}
        =\{d(u(A^{(n)}(\hat{x})), s(A^{(n)}(\hat{x})))\leq e^{-3\epsilon \lfloor n/4 \rfloor}\}\cap(\Omega_{n}\cup \Omega_{n}^{C}).
    \end{multline*}
    Therefore, we have the following bounds for the measure of the event
    \begin{align*} 
        \hat{\mu}(d(u(A^{(n)}(\hat{x}))&, s(A^{(n)}(\hat{x})))\leq e^{-3\epsilon \lfloor n/4\rfloor})\\
        &\leq 2M_1 e^{-\beta_1 \lfloor n/4 \rfloor}+\hat{\mu}\left(d(u(A^{(\lfloor n/4\rfloor )}(T^{\lceil 3n/4\rceil}\hat{x})), s(A^{(\lfloor n/4 \rfloor)}(\hat{x})))\leq 3e^{-3\epsilon \lfloor n/4 \rfloor}\right)\\
        &\leq 2M_{1}e^{-\beta_1 \lfloor n/4 \rfloor}+\eta_{\lfloor n/4 \rfloor}(P_{\lfloor n/4 \rfloor}),\\
        \intertext{using Lemma \ref{Lemma5.4} }
        &\leq 2M_1 e^{-\beta_1 \lfloor n/4 \rfloor}+\nu^1_{\lfloor n/4 \rfloor}\times \nu^2_{\lfloor n/4 \rfloor} (P_{\lfloor n/4 \rfloor})+K\gamma^{n/2}. \numberthis \label{eq:2}
    \end{align*}
    Thus, using Lemma \ref{Lemma5.3}, we can find $M_2>0$ and $\beta_2>0$ such that
    \begin{align*}\label{eq:3}
        &\nu^1_{\lfloor n/4 \rfloor}\times \nu^2_{\lfloor n/4 \rfloor} (P_{\lfloor n/4 \rfloor})\\
        &=\sum_{\overline{w}\in \phi^1_{\lfloor n/4 \rfloor} (\hat{\Sigma})}\hat{\mu}\left(u(A^{[\lfloor n/4 \rfloor]}(\hat{x}))=\overline{w}\right)\hat{\mu}\left(d(u(A^{(\lfloor n/4\rfloor )}(T^{\lceil 3n/4\rceil}\hat{x})), w^{\perp})\leq 3e^{-3\epsilon \lfloor n/4 \rfloor}\right)\\
        &\leq \sup_{\Vert v\Vert=1}\hat{\mu}\left(d(u(A^{(\lfloor n/4\rfloor )}(T^{\lceil 3n/4\rceil}\hat{x})), v^{\perp})\leq 3e^{-3\epsilon \lfloor n/4 \rfloor}\right)\sum_{\overline{w}\in \phi^1_{\lfloor n/4 \rfloor} (\hat{\Sigma})}\hat{\mu}\left(u(A^{[\lfloor n/4 \rfloor]}(\hat{x}))=\overline{w}\right)\\
        &\leq M_2e^{-\beta_2 \lfloor n/4 \rfloor}. \numberthis
    \end{align*}
    From (\ref{eq:2}) and (\ref{eq:3}), taking $M=\max\{M_1,M_2\}$ and $\beta=\min\{\beta_{1},\beta_{1}\}$ we conclude that
    $$\hat{\mu}\left(d(u(A^{(n)}(\hat{x})), s(A^{(n)}(\hat{x})))\leq e^{-3\epsilon \lfloor n/4\rfloor}\right)\leq 3Me^{-\beta \lfloor n/4 \rfloor}+K\gamma^{n/2}.$$
\end{proof}

\section{Proof of Main Theorem} \label{Section6}

From Benoist and Quint, we have the following lemma which will help us conclude the theorem.
\begin{lemma}[\protect{\cite[Lemma~14.14]{RandomWalks}}]\label{LemmaBenoistQuint}
    Let $g\in GL(d, \mathbb{R})$. If $d(u(g), s(g))>2\sqrt{\dfrac{a_{2}(g)}{a_{1}(g)}}$, then
    $$\dfrac{d(u(g), s(g))}{2}\leq \frac{\rho(g)}{\norm{g}}.$$
\end{lemma}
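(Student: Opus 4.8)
The plan is to exhibit a single unit vector whose forward orbit under $g$ grows, at every step, by a factor at least $\|g\|\cdot d(u(g),s(g))/2$, and then invoke Gelfand's formula $\rho(g)=\lim_n\|g^n\|^{1/n}$ to transfer this lower bound on norm growth into a lower bound on $\rho(g)$.

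First I would record two elementary consequences of the singular value decomposition $g=L_g\Delta_g R_g$. For a unit vector $w$, write $R_gw=\sum_i c_ie_i$, so that $\sum_i c_i^2=1$ and $gw=L_g(a_1(g)c_1,\dots,a_d(g)c_d)$; using that the gap metric is invariant under $O(d)$ one gets $\|gw\|\ge a_1(g)|c_1|$ and $d(\overline{gw},u(g))=d(\overline{(a_1(g)c_1,\dots)},\overline{e_1})\le a_2(g)/(a_1(g)|c_1|)$. Since $s(g)=\{\overline w:\langle w,u(g^*)\rangle=0\}=(R_g^*e_1)^\perp$, a short computation with the gap metric gives $|c_1|=|\langle w,R_g^*e_1\rangle|=d(\overline w,s(g))$. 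Hence $\|gw\|\ge a_1(g)\,d(\overline w,s(g))$ and $d(\overline{gw},u(g))\le a_2(g)/\big(a_1(g)\,d(\overline w,s(g))\big)$.

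Now set $\delta=d(u(g),s(g))$, pick a unit vector $w_0$ with $\overline{w_0}=u(g)$, and define $w_{n+1}=gw_n/\|gw_n\|$. The key claim, which I would prove by induction, is that $d(\overline{w_n},s(g))\ge\delta/2$ for all $n$: the case $n=0$ is clear, and if it holds at step $n$ then the second estimate gives $d(\overline{w_{n+1}},u(g))\le a_2(g)/(a_1(g)\delta/2)=2a_2(g)/(a_1(g)\delta)$, which is $<\delta/2$ precisely because $\delta>2\sqrt{a_2(g)/a_1(g)}$, whence the triangle inequality for the gap metric yields $d(\overline{w_{n+1}},s(g))\ge\delta-d(\overline{w_{n+1}},u(g))>\delta/2$. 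With this invariance, the first estimate applied along the orbit gives $\|g^{n+1}w_0\|=\|gw_n\|\,\|g^nw_0\|\ge a_1(g)(\delta/2)\,\|g^nw_0\|$, so $\|g^n\|\ge\|g^nw_0\|\ge(a_1(g)\delta/2)^n$, and taking $n$th roots $\rho(g)\ge a_1(g)\delta/2=\|g\|\,d(u(g),s(g))/2$, which is the claim.

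The only real point of care is the induction: one must check that the cone around $u(g)$ defined by $d(\cdot,s(g))\ge\delta/2$ is mapped into itself by one step of $g$, and the whole argument hinges on this. It is exactly the quantitative threshold $d(u(g),s(g))>2\sqrt{a_2(g)/a_1(g)}$ that makes the inductive inequality close (with the factor $1/2$), so there is no slack to spare; everything else is a routine computation with the gap metric and the singular value decomposition.
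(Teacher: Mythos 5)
Your argument is correct. A small remark: the paper itself gives no proof of this lemma but simply cites Benoist--Quint, so there is no in-text argument to compare against. Your two elementary SVD estimates (for unit $w$, $\lVert gw\rVert\geq a_1(g)\,d(\overline{w},s(g))$ and $d(\overline{gw},u(g))\leq a_2(g)/(a_1(g)\,d(\overline{w},s(g)))$, with $d(\overline{w},s(g))=|\langle w,R_g^*e_1\rangle|$) are verified correctly, the triangle inequality for the distance to a projective subspace is used legitimately, and the threshold $\delta>2\sqrt{a_2/a_1}$ is exactly what makes the cone $\{d(\cdot,s(g))\geq\delta/2\}$ invariant under the projective action of $g$, so the induction closes and $\lVert g^n\rVert\geq\lVert g^nw_0\rVert\geq(a_1(g)\delta/2)^n$; Gelfand's formula then delivers $\rho(g)\geq\lVert g\rVert\,\delta/2$. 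The standard Benoist--Quint route instead uses the same cone-invariance to extract a genuine eigenvector inside the cone by a fixed-point argument and bounds its eigenvalue directly; your power-iteration-plus-Gelfand version reaches the same constant without invoking a fixed-point theorem, which is a clean and slightly more elementary variant.
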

Now, we proceed to prove the main theorem of the paper. 
\begin{proof}[Proof of Theorem \ref{TeoremaPrincipal}]
    First, let us assume that $\lambda_1(\hat{\mu},A)>\lambda_2(\hat{\mu},A)$. By Lemma \ref{PrimerLDE} applied to $\mathsf{\Lambda}^{2}A$, Corollary \ref{Corolario} applied to $A$ and by Lemma \ref{Lemma5.5}, for sufficiently large $n \in\mathbb{N}$ there exist $M_1>0$ $\beta_1>0$ and $0<\gamma<1$ such that  
    \begin{align*}
        \hat{\mu}\left(\dfrac{\left\Vert\mathsf{\Lambda}^2 A^{(n)}(\hat{x})\right\Vert}{\left\Vert A^{(n)}(\hat{x})\right\Vert^2}< e^{n(\lambda_2-\lambda_1+3\epsilon)}\right)&>1-M_1e^{-\beta_1 n},\\
        \hat{\mu}\left(d(u(A^{(n)}(\hat{x})), s(A^{(n)}(\hat{x})))> e^{-3\epsilon \lfloor n/4\rfloor}\right)&> 1-M_1e^{-\beta_1 \lfloor n/4\rfloor}-K\gamma^{n/2}.
    \end{align*}
    Then, for $n>\frac{3\log{2}}{\lambda_{2}-\lambda_{1}}$
    \begin{align*}
        \hat{\mu}\left(2\sqrt{\dfrac{\left\Vert\mathsf{\Lambda}^2 A^{(n)}(\hat{x})\right\Vert}{\left\Vert A^{(n)}(\hat{x})\right\Vert^2}} < d(u(A^{(n)}(\hat{x})), s(A^{(n)}(\hat{x})))\right)&>1-2M_1e^{-\beta_1 \lfloor n/4\rfloor}-K\gamma^{n/2},
        \intertext{which implies, by Lemma \ref{LemmaBenoistQuint},}
        \hat{\mu}\left(\dfrac{\rho(A^{(n)}(\hat{x}))}{\norm{A^{(n)}(\hat{x})}}> \dfrac{1}{2}e^{-3\epsilon \lfloor n/4\rfloor}\right)&> 1-2M_1e^{-\beta_1 \lfloor n/4\rfloor}-K\gamma^{n/2}.
        \intertext{Therefore, we have that }
        \sum_{n\in \mathbb{N}}\hat{\mu}\left(\dfrac{\rho(A^{(n)}(\hat{x}))}{\norm{A^{(n)}(\hat{x})}}\leq  \dfrac{1}{2}e^{-3\epsilon \lfloor n/4\rfloor}\right)&<\infty.
    \end{align*}
    By Borel-Cantelli's lemma, for every $0\leq \epsilon \leq \frac{\lambda_{1}-\lambda_{2}}{20}$ and for $\hat{\mu}$-almost every $\hat{x}\in \hat{\Sigma}$, there exist $n_{0}(\hat{x},\epsilon)$ such that for $n>n_{0}$:
    $$-\dfrac{3}{4}\epsilon\leq \dfrac{1}{n}\log{\dfrac{\rho(A^{(n)}(\hat{x}))}{\norm{A^{(n)}(\hat{x})}}}\leq 0.$$
    Thus:
    $$\dfrac{1}{n}\log{\dfrac{\rho(A^{(n)}(\hat{x}))}{\norm{A^{(n)}(\hat{x})}}}\xrightarrow[n\to \infty]{a.s.} 0.$$
    The result is obtained by Oseledets' theorem.
    \end{proof}

    Now, we can reduce the proof of Theorem \ref{TeoremaSecundario} to applying Theorem \ref{TeoremaPrincipal} to an exterior product of the cocycle $A$
    \begin{proof}[Proof of Theorem \ref{TeoremaSecundario}]
    If $\lambda_{1}(\hat{\mu},A)=\dots=\lambda_{d}(\hat{\mu},A)$, then the result follows from the fact that $a_{1}(g)\geq \rho(g)\geq a_{d}(g)$ for all $g\in \text{GL}(d,\mathbb{R})$.

    On the other hand, in case that there exists $1\leq t\leq d$ such that $\lambda_{1}(\hat{\mu},A)=\dots=\lambda_{t}(\hat{\mu},A)>\lambda_{t+1}(\hat{\mu},A)$, consider the cocycle given by $B=\bigwedge^{t}A$. Hence, $\lambda_{1}(\hat{\mu},B)>\lambda_{2}(\hat{\mu},B)$, so from Theorem \ref{TeoremaPrincipal} 
    $$\dfrac{1}{n}\log{\rho(B^{(n)}(\hat{x}))}\xrightarrow[n\to \infty]{a.s.}\lambda_{1}(\hat{\mu},B)=t\lambda_{1}(\hat{\mu},A).$$
    Then, since $\rho\left(\bigwedge^{t}g \right)\leq \rho(g)^{t}$ for every $g\in \text{GL}(d,\mathbb{R})$  we obtain on a set of full measure
    $$\lambda_{1}(\hat{\mu}, A)\leq \liminf_{n\to\infty}\dfrac{1}{n}\log{\rho(A^{(n)}(\hat{x}))}.$$
    From Morris \cite[Theorem~1.6]{Morris}, we know that
    $$\lambda_{1}(\hat{\mu}, A)= \limsup_{n\to\infty}\dfrac{1}{n}\log{\rho(A^{(n)}(\hat{x}))}.$$
    We conclude by comparing the last two expressions. 
    \end{proof}
    \section*{Acknowledgments}
    I would like to thank my advisor Jairo Bochi for his constant support, insightful advice and motivation to develop this work. I would also like to extend my thanks to Cagri Sert for many helpful discussions which continuously guided this project.  Finally, I would like to thank the referee for their valuable suggestions to improve the connections of this paper to the available literature and in the overall form of the document.

\end{document}